
\documentclass[3p,12pt]{elsarticle}
\usepackage{amsmath,amssymb,amsfonts,verbatim}
\journal{Journal of Algebra}
\pagestyle{myheadings}


\newtheorem{theorem}{Theorem}[section]

\newtheorem{prop}[theorem]{Proposition}
\newtheorem{cor}[theorem]{Corollary}

\newenvironment{proof}{{\flushleft\it Proof.}}{\hfill$\square$ \\}
\numberwithin{equation}{section}


\def\A{\mathcal{A\ts}}
\def\Bb{\ts\,\overline{\!\Br\!}\,\ts}
\def\ad{\operatorname{ad}}
\def\al{\alpha}
\def\alh{\widehat{\alpha}}

\def\Bb{\,\overline{\hspace{-0.75pt}\Br\hspace{-0.75pt}}\,}
\def\be{\beta}
\def\beh{\widehat{\beta}}
\def\Br{\mathrm{B}}

\def\C{\mathfrak{C}}
\def\CC{\mathbb{C}}
\def\comp{\ts{\scriptstyle\circ}\ts}

\def\d{\partial}
\def\de{\delta}

\def\E{\mathcal{E}}
\def\End{\operatorname{End}\ts}
\def\ep{\varepsilon}

\def\F{\mathcal{F}}

\def\gl{\mathfrak{gl}}
\def\glhat{\widehat{\mathfrak{gl}}}

\def\h{\mathfrak{h}}
\def\H{\mathfrak{H}}
\def\Hb{\Ub(\,\hh\,)}
\def\hh{\widehat{\mathfrak{h}}}
\def\Hr{\Ur(\,\hh\,)}

\def\Ib{{\bar\Ir}}
\def\id{{\rm id}}
\def\Ind{\operatorname{Ind}}

\def\Ir{\mathrm{I}}

\def\Jb{{\,\overline{\hspace{-1.25pt}\Jr}}}
\def\Jpr{\Jr^{\,\prime}} 
\def\Jpb{\Jb{}^{\,\prime}} 
\def\Jr{{\mathrm J}}

\def\ka{\kappa}

\def\la{\lambda}

\def\lcd{\ts,\ldots,}
\def\le{\leqslant}

\def\muh{\widehat{\mu}}

\def\n{\mathfrak{n}}
\def\np{\n^{\ts\prime}}
\def\nt{\widehat{\mathfrak{n}}}
\def\ntp{\widehat{\mathfrak{n}}^{\,\ts\prime}}

\def\om{\omega}
\def\op{\oplus}
\def\ot{\otimes}

\def\p{\mathfrak{p}}
\def\P{\mathfrak{P}}

\def\q{\mathfrak{q}}

\def\R{\mathfrak{R}\ts}

\def\si{\sigma}
\def\sl{\mathfrak{sl}}
\def\slhat{\widehat{\mathfrak{sl}}}
\def\Sym{\mathfrak{S}}

\def\t{\mathfrak{t}}
\def\T{\mathfrak{T}}
\def\th{\hat{\mathfrak{t}}}
\def\ts{\hskip.75pt}
\def\tts{\hskip.5pt}

\def\Ub{\ts\overline{\hspace{-0.75pt}\Ur\hspace{-0.75pt}}\ts}
\def\up{\upsilon}
\def\Ur{\mathrm{U}}

\def\ze{\zeta}
\def\ZZ{\mathbb{Z}}


\begin{document}

\begin{frontmatter}

\title{Cherednik algebras and Zhelobenko operators}
 
\author[a,b]{Sergey Khoroshkin\,}
\address[a]{Institute for Theoretical and Experimental Physics, 
Moscow 117218, Russia}
\address[b]{National Research University Higher School of Economics, 
Moscow 101000, Russia}

\author[c]{Maxim Nazarov\,}
\address[c]{
University of York, 
York YO10 5DD, England
\\[20pt]
{\rm\normalsize 
To Professor Alexandre Kirillov on the occasion of his eightieth birthday}
\\[-10pt]
}



\end{frontmatter}

\thispagestyle{empty}


\vspace{-4pt}
\section*{Introduction}


\subsection{}
\label{sec:01}

In the present article we study 
the trigonometric Cherednik algebra $\C_N$
corresponding to the general linear Lie algebra $\gl_N\,$.
The complex associative algebra $\C_N$ is generated by 
the symmetric group ring $\CC\ts\Sym_N\,$, 
by the ring $\P_N$ of Laurent polynomials in $N$ variables $x_1\lcd x_N$
and by another family of pairwise commuting elements 
denoted by $u_1\lcd u_N\,$.
The subalgebra of $\C_N$ generated by the first two rings is 
the crossed product 
$\Sym_N\ltimes\P_N$ where the symmetric group $\Sym_N$ 
permutes the variables $x_1\lcd x_N\,$. The subalgebra generated by 
$\Sym_N$ and $u_1\lcd u_N$ is the degenerate affine Hecke algebra
$\H_N$ introduced by Drinfeld \cite{D1} and Lusztig \cite{L}.
The other defining relations in $\C_N$ are given in the beginning
of Section 2 of our article.
In particular, the algebra $\C_N$ depends on a parameter $\ka\in\CC\,$.

The degenerate affine Hecke algebra 
$\H_N$ has a distinguished family
of modules which~are called \emph{standard\/}. 
These modules are determined by pairs of
sequences $\la=(\ts\la_1\lcd\la_m\ts)$ and $\mu=(\ts\mu_1\lcd\mu_m\ts)$
of length $m$ of complex numbers such that
for every $a=1\lcd m$ the difference $\la_{\ts a}-\mu_{\ts a}$ 
is a positive integer, while
$
\la_{\ts 1}-\mu_{\ts 1}+\ldots+\la_{\ts m}-\mu_{\ts m}=N\ts.
$
We denote the corresponding standard module of $\H_N$ by 
$S_{\mu}^{\ts\la}\,$. 
It is induced
from a one-dimensional module of the subalgebra of $\H_N$ 
generated by $u_1\lcd u_N$ 
and by the subgroup of $\Sym_N$ 
preserving the partition of the sequence $1\lcd N$ 
to consecutive segments
of lengths $\la_{\ts 1}-\mu_{\ts 1}\lcd\la_{\ts m}-\mu_{\ts m}\,$. 
This subgroup of $\Sym_N$ acts on the one-dimensional module trivially, 
while any generator $u_{\ts p}$ acts as
$
\mu_a-a+h
$
where $a$ is the number of the segment of the sequence $1\lcd N$ which 
the index $p$ belongs to, and $h$ is the number of the place
of the index $p$ within that segment. 


Now consider the symmetric group $\Sym_m$ which acts
on sequences of length $m$ of complex numbers by permutations.
We will denote by the symbol $\comp$ the corresponding
\emph{shifted} action of $\Sym_m\,$.
To define the latter action,
one takes a sequence of length $m\,$, subtracts the sequence 
$(1\ts,\ldots,m)$ from it, permutes the resulting sequence 
and adds the sequence $(1\ts,\ldots,m)$ back.
If $\la_a-\la_b\notin\ZZ$ or equivalently $\mu_a-\mu_b\notin\ZZ$
for all $a\neq b\,$, then the standard $\H_N\ts$-module $S_{\mu}^{\ts\la}$
is irreducible. Moreover, then for every permutation $\si\in\Sym_m$
the standard module $S_{\ts\si\comp\mu}^{\,\si\comp\la}$
is isomorphic to $S_{\mu}^{\ts\la}\,$. Hence there exists 
an intertwining mapping 
$\,S_{\mu}^{\ts\la}\to S_{\ts\si\comp\mu}^{\,\si\comp\la}\,$
of $\H_N\ts$-modules, unique up to scalar multiplier.
These mappings were already used by Rogawski in \cite{R}.

The standard $\H_N\ts$-module $S_{\mu}^{\ts\la}$ has another 
realization due to Arakawa, Suzuki and Tsuchiya \cite{AST}.
Take the complex 
Lie algebra $\gl_m\,$. The sequences of length $m$ of complex numbers 
can be regarded as weights of $\gl_m\,$.
There we employ the Cartan subalgebra $\t$ of~$\gl_m$ described in 
Subsection \ref{sec:13}. Then
the above defined action $\comp$
becomes the shifted action of the Weyl group $\Sym_m$
of $\gl_m$ on weights.
Take the Verma module $M_\mu$ of $\gl_m$
corresponding~to~the~weight~$\mu\,$.

The tensor product $(\CC^{\ts m})^{\ot N}\ot\,M_\mu$
of $\gl_m\ts$-modules can be equipped with an action of 
the algebra $\H_N\,$, which commutes with the action of $\gl_m\,$.
The symmetric group $\Sym_N\subset\H_N$ acts on 
$(\CC^{\ts m})^{\ot N}\ot\,M_\mu$
by permutations of the $N$ tensor factors $\CC^{\ts m}\ts$,
while the elements \eqref{zp} of $\H_N$ act as the operators \eqref{yact}
respectively. 
Let $\n$ be the 
nilpotent subalgebra of $\gl_m$ defined in Subsection \ref{sec:13}.
The space 
$
(\ts(\CC^{\ts m})^{\ot N}\ot\,M_\mu\ts)\ts)_{\,\n}^{\ts\la}
$
of $\n\ts$-coinvariants of weight $\la$ inherits an action of 
the algebra $\H_N\,$. As a $\H_N\ts$-module
it is isomorphic to $S_{\mu}^{\ts\la}\,$,
see our Proposition~\ref{1.3}. 

Following Zhelobenko \cite{Z}, for any $\la$ and $\mu$
obeying the above non-integrality conditions, and 
for any permutation $\si\in\Sym_m\,$,
one can define a canonical linear map
\begin{equation}
\label{MM}
(\ts(\CC^{\ts m})^{\ot N}\ot\,M_\mu\ts)\ts)_{\,\n}^{\ts\la}
\,\to
(\ts(\CC^{\ts m})^{\ot N}\ot\,
M_{\ts\si\comp\mu}\ts)\ts)_{\,\n}^{\,\si\comp\la}\,.
\end{equation}
For this definition and its generalizations
see the work of Khoroshkin and Ogievetsky
\cite{KO}. In particular,
the linear map \eqref{MM} is $\H_N\ts$-intertwining. 
Using Proposition~\ref{1.3},
the~map \eqref{MM} determines 
an $\H_N\ts$-intertwining map
$\,S_{\mu}^{\ts\la}\to S_{\ts\si\comp\mu}^{\,\si\comp\la}\,$.
By the irreducibility of the source and of target 
standard $\H_N\ts$-modules here, the latter map
coincides with the intertwining map from \cite{R}
up to a scalar multiplier.

We will work with the special linear Lie algebra
$\sl_m$ alongside of $\gl_m\ts$. In particular,
our $\n$ is a subalgebra of $\sl_m\,$. Further, 
the Cartan subalgebra $\h$ of $\sl_m$ described in 
Subsection~\ref{sec:13} is contained in $\t\subset\gl_m\,$.
Let us denote by $\al$ and $\beta$ the weights of
$\sl_m$ corresponding to the weights $\la$ and $\mu$ of
$\gl_m$ by restriction. Hence $\al$ and $\be$ are elements of
the space dual to~$\h\,$.

The Verma module $M_\be$ is isomorphic to the restriction of
$M_\mu$ to subalgebra $\sl_m\subset\gl_m\,$. 
However, another action of $\H_N$ on $(\CC^{\ts m})^{\ot N}\ot\,M_\be$
can be defined by using only the structure of $M_\be$
as a module of $\sl_m\,$. Namely, 
he symmetric group $\Sym_N\subset\H_N$ acts on 
$(\CC^{\ts m})^{\ot N}\ot\,M_\be$ again
by permutations of the $N$ tensor factors $\CC^{\ts m}\ts$,
but the elements \eqref{zp} of $\H_N$ act as the operators \eqref{yacts}
respectively. The space 
$
(\ts(\CC^{\ts m})^{\ot N}\ot\,M_\be\ts)\ts)_{\,\n}^{\ts\al}
$
of $\n\ts$-coinvariants of weight $\al$ inherits an action of 
$\H_N\,$. As a $\H_N\ts$-module
it is isomorphic to the pullback of $S_{\mu}^{\ts\la}$
relative to the automorphism \eqref{shift} of $\H_N$
where $f=-\,(\ts\mu_1+\ldots+\mu_m\ts)\ts/\ts m\,$,
see Corollary~\ref{1.4}. This automorphism
acts trivially on the elements of the subalgebra $\Sym_m\subset\H_N\,$.
Note that the latter  space of $\n\ts$-coinvariants
can be naturally identified with the space
$
(\ts(\CC^{\ts m})^{\ot N}\ot\,M_\mu\ts)\ts)_{\,\n}^{\ts\la}\,\ts.
$

The shifted action of the group $\Sym_m$ on the weights of $\gl_m$
factors to an action on the weights of $\sl_m\,$. 
By again following \cite{KO} and \cite{Z},
one can also define a canonical linear map
\begin{equation}
\label{MMS}
(\ts(\CC^{\ts m})^{\ot N}\ot\,M_\be\ts)\ts)_{\,\n}^{\ts\al}
\,\to
(\ts(\CC^{\ts m})^{\ot N}\ot\,
M_{\ts\si\comp\be}\ts)\ts)_{\,\n}^{\,\si\comp\al}\,.
\end{equation}
If we identify the source vector spaces
of the maps \eqref{MM} and \eqref{MMS} as above, and also 
identify the target vector spaces,
then the two maps become the same. 
Note that the action $\comp$ of $\Sym_m$ on 
$\mu$ preserves the sum $\mu_1+\ldots+\mu_m\,$.
Hence the map \eqref{MMS} is also $\H_N\ts$-intertwining.

\enlargethispage{24pt}

Via the Drinfeld duality
between $\H_N\ts$-modules and
modules of the Yangians of general linear Lie algebras \cite{D1}
this interpretation of intertwining maps for
the standard modules of $\H_N$
goes back to the work of Tarasov and Varchenko~\cite{TV}, 
see also our work \cite{KN1}. 
In the present work we extend
this interpretation to intertwining maps for
certain $\C_N\ts$-modules.
Instead of $\gl_m$ and $\sl_m$ above, we will use
the corresponding affine Lie algebras $\glhat_m$ and $\slhat_m\,$.

\newpage


\subsection{}
\label{sec:02}

We will regard $\slhat_m$ as a one-dimensional
central extension of the current Lie algebra
$\sl_m\ts[\,t,t^{-1}\ts]\,$.
We choose a basis element $C$
in the extending one-dimensional vector space.
For any $\ell\in\CC\,$, a module of $\slhat_m$
is said to be of level $\ell$ if $C$ acts as 
the scalar $\ell$ on that module. 
Let us extend the Cartan subalgebra $\h$ of $\sl_m$ 
by the one-dimensional space spanned by $C\ts$,
and denote by $\hh$ the Abelian subalgebra
of $\slhat_m$ so obtained.
For $\ell=\ka-m$ we will denote by $\alh$ and $\beh$
the extensions of the weights $\al$ and $\be$ from $\h$ to $\hh\,$, 
determined by setting $\alh\ts(C)=\beh\ts(C)=\ell\,$.
We will use the Verma module $M_{\ts\beh}$ of 
$\slhat_m$ as defined in Subsection~\ref{sec:23}.
\\[-14pt]

Let us now regard $\H_N$ as a subalgebra of $\C_N\,$.
Denote by ${\widehat{S}}_{\ts\mu}^{\,\la}$ the module of $\C_N$
induced from the standard module ${S}_{\ts\mu}^{\,\la}$ of $\H_N\,$.
The induced module also has another realization \cite{AST}.
Take the vector space $\P_N\ot(\CC^{\ts m})^{\ot N}\,$. 
It can be naturally identified with the tensor product of $N$ copies
of the vector space $\CC^{\ts m}[\ts t\ts,t^{-1}\ts]\,$.
By regarding the latter space as a module of~$\slhat_m$ of level zero,  
$\P_N\ot(\CC^{\ts m})^{\ot N}$ 
becomes a zero level module of $\slhat_m\,$. Further, the vector space
\begin{equation}
\label{PCM}
\P_N\ot(\CC^{\ts m})^{\ot N}\ot M_{\ts\beh}
\end{equation}
can be equipped with an action of the algebra $\C_N\,$. 
The symmetric group $\Sym_N\subset\C_N$ acts on \eqref{PCM}
by simultaneous permutations of the variables $x_1\lcd x_N$ and
of the $N$ tensor factors $\CC^{\ts m}$
while the subalgebra $\P_N\subset\C_N$ acts on \eqref{PCM}
via multiplication in the first tensor factor.
The elements \eqref{zp} of $\H_N\subset\C_N$ act on \eqref{PCM}
as the operators \eqref{ups} respectively.

In general, the action of $\C_N$ on the vector space \eqref{PCM}
does \emph{not\/} commute with that of~$\slhat_m\,$.
However, let $\nt$ be the
nilpotent subalgebra of $\slhat_m$ defined in Subsection \ref{sec:21}.
For $\ell=\ka-m$ the action of $\C_N$ on \eqref{PCM}
preserves the image of the action of $\ts\nt\,$. 
Therefore the space 
\begin{equation}
\label{PCMAB}
(\,\P_N\ot(\CC^{\ts m})^{\ot N}\ot M_{\ts\beh}\,)_{\,\ts\nt}^{\,\alh}
\end{equation}
of $\nt\ts$-coinvariants of \eqref{PCM}
of weight $\ts\alh\ts$ inherits an action of 
$\C_N\,$. The automorphism \eqref{shift} of $\H_N$ 
extends to $\C_N$ so that it acts on the elements
of the subalgebra $\P_N\subset\C_N$ trivially.
As a $\C_N\ts$-module, \eqref{PCMAB}
is isomorphic to the pullback of ${\widehat{S}}_{\ts\mu}^{\,\la}$
relative to the extended automorphism \eqref{shift}
where $f=-\,(\ts\mu_1+\ldots+\mu_m\ts)\ts/\ts m\,$,
see our Corollary \ref{2.2} and Proposition~\ref{2.3}. 

Now consider the semidirect group product $\Sym_m\ltimes\ZZ^m\,$. 
Extend the permutation action of the group $\Sym_m$ on sequences
of length $m$ of complex numbers to an action 
of $\Sym_m\ltimes\ZZ^m$
so that the elements of
$\ts\ZZ^m$ act by addition of the respective elements of
$\ts\ell\,\ZZ^m\subset\CC^m\,$.
Here we set $\ell=\ka-m$ as above.
The action $\comp$ of $\Sym_m$
on the sequences also extends to an action of the group
$\Sym_m\ltimes\ZZ^m\,$, where 
the elements of $\ZZ^m$ however
act by addition of the respective elements of
$\ts\ka\,\ZZ^m\subset\CC^m\,$.
Let us denote by the symbol $\comp$
the latter action of $\Sym_m\ltimes\ZZ^m$ 
on the sequences.
If $\la_a-\la_b\notin\ZZ+\ka\,\ZZ$ or equivalently 
$\mu_a-\mu_b\notin\ZZ+\ka\,\ZZ$
for $a\neq b\,$,  then the induced
$\C_N\ts$-module ${\widehat{S}}_{\ts\mu}^{\,\la}$ is irreducible. 
Moreover, then for every element 
$\om\in\Sym_m\ltimes\ZZ^m$ the 
\\[-2pt]
$\C_N\ts$-module ${\widehat{S}}_{\ts\om\ts\comp\mu}^{\,\om\ts\comp\la}$
is isomorphic to ${\widehat{S}}_{\mu}^{\ts\la}\,$.
Hence there is an intertwining mapping 
$
\widehat{S}_{\mu}^{\ts\la}
\to
\widehat{S}_{\,\om\ts\comp\mu}^{\ts\,\om\ts\comp\la}
$
of $\C_N\ts$-modules, unique up to a scalar multiplier.
These mappings were used by Suzuki in~\cite{S1}.
Recently they were further used by Balagovi\'c in \cite{B}.

\enlargethispage{4pt}

The group $\Sym_m\ltimes\ZZ^m$ is isomorphic to
the extended affine Weyl group of 
$\gl_m\,$, for details
see Subsection~\ref{sec:24}.
This group is $\ZZ\ts$-graded 
so that the degree of any element of
$\Sym_m$ is zero, while
the degree of any element of $\ZZ^m$ is
the sum of its $m$ components.
All the elements of degree zero 
make a subgroup of $\Sym_m\ltimes\ZZ^m$ isomorphic to 
the proper affine Weyl group of~$\gl_m\,$.
Note that this subgroup is also isomorphic to the Weyl group
of the affine Lie algebra $\slhat_m\,$.

\newpage

Again regard $\la$ and $\mu$ as weights of $\gl_m\,$. 
Restrict them to the weights $\al$ and $\be$ of $\sl_m\,$.
Extend the latter two to the weights $\alh$ and $\beh$ of $\slhat_m$
as above. 
The action $\comp$ of 
$\Sym_m\ltimes\ZZ^m$ on $\la$ and $\mu$ 
determines its action on $\alh$ and $\beh\,$.
We will still denote by $\comp$ the action of $\Sym_m\ltimes\ZZ^m$ 
so determined.
It can also be described as a shifted action of 
the group $\Sym_m\ltimes\ZZ^m$
on those weights of $\slhat_m$ which take the value $\ell=\ka-m$
at $C\in\hh\,$, see Subsection \ref{sec:31} for details.

By following \cite{KO} and \cite{Z},
for every element $\om\in\Sym_m\ltimes\ZZ^m$ 
we can define a canonical linear map 
from the vector space \eqref{PCMAB} to the vector space 
\begin{equation}
\label{PCMOM}
(\,\P_N\ot(\CC^{\ts m})^{\ot N}\ot 
M_{\ts\om\ts\comp\beh}\,)_{\,\ts\nt}^{\,\om\ts\comp\alh}\,.
\end{equation}
Details of this definition are given in our Subsection \ref{sec:41}.
Denote by $g$ the $\ZZ\ts$-degree of~$\om\,$.
Our linear map is $\C_N\ts$-intertwining only if
$\ka=0$ or $g=0\,$.
In general, it becomes $\C_N\ts$-intertwining
if we pull the action of $\C_N$ on the target space \eqref{PCMOM}
back through the automorphism \eqref{shift} where $f=\ka\,g\ts/m\,$. 
Here we use Proposition \ref{2.4} and its Corollary \ref{2.5}
which seem to be new.

By using Proposition \ref{2.3}
we can now replace the source and the target modules of this 
$\C_N\ts$-intertwining linear map by their isomorphic modules.
The source module can be replaced by
the pullback of ${\widehat{S}}_{\ts\mu}^{\,\la}$
relative to the automorphism \eqref{shift}
where $f=-\,(\ts\mu_1+\ldots+\mu_m\ts)\ts/\ts m\,$.
Note that the sum of the terms of the
sequence $\om\ts\comp\ts\mu$ is equal to
$\mu_1+\ldots+\mu_m+\ka\,g$
by the definition of the action $\comp$ of 
$\Sym_m\ltimes\ZZ^m$ on the sequences. 
Therefore the target module here can be replaced by the pullback of
$\widehat{S}_{\,\om\ts\comp\mu}^{\ts\,\om\ts\comp\la}$
relative to the automorphism \eqref{shift} where 
$$
f=-\,(\ts\mu_1+\ldots+\mu_m+\ka\,g\ts)\ts/\ts m+\ka\,g\ts/m
=-\,(\ts\mu_1+\ldots+\mu_m\ts)\ts/\ts m\,.
$$
Since the values of $f$ for the 
the source and the target modules are the same, 
our canonical linear map from 
\eqref{PCMAB} to \eqref{PCMOM} 
also determines a $\C_N\ts$-intertwining linear map
$
\widehat{S}_{\mu}^{\ts\la}
\to
\widehat{S}_{\,\om\ts\comp\mu}^{\ts\,\om\ts\comp\la}\,.
$
By the irreducibility of the source and of target 
induced $\C_N\ts$-modules here, the latter map
coincides with the intertwining map from \cite{S1}
up to a scalar multiplier.

Thus the principal result of our article is
that the Zhelobenko operators for the affine Lie algebra
$\slhat_m$ determine intertwining maps between induced modules
of the trigonometric Cherednik algebra $\C_N\,$.
Note that these operators have been defined in \cite{Z}
only for complex reductive Lie algebras.
Hence we also extend the definition of \cite{Z}
to affine Lie algebras.


\subsection{}
\label{sec:03}

Let us now briefly survey our article. In Section \ref{sec:1} we collect 
some basic facts about the degenerate affine Hecke algebra $\H_N\,$,
including the realisation of standard modules from~\cite{AST}.
In Section \ref{sec:2} we recall the definition of the trigonometric 
Cherednik algebra $\C_N\,$, and describe the action of $\C_N$ on the 
spaces of $\nt\ts$-coinvariants. By using this action, we give
the realisation of induced modules of $\C_N$ mentioned above.
Towards the end of Section \ref{sec:2} we introduce the extended
affine Weyl group of $\gl_m\,$, and describe its action on the 
spaces of $\nt\ts$-coinvariants. Our Proposition \ref{2.4}
relates this action to the action of $\C_N$ on the same spaces.
This relation is the key technical result of our article. 
In Section \ref{sec:3} we define the Zhelobenko operators~for 
the affine Lie algebra $\slhat_m\,$. Theorem \ref{3.6}
relates these operators to the algebra $\C_N\,$.
Further details of this relation are worked out in Section \ref{sec:4}. 

The first named author has been supported 
by the Russian Academic Excellence Project 5\ts-100
and by the RSF grant 16-11-10316.
The second named author was supported by 
the EPSRC grant N023919 and by
a Santander International Connections Award.

\newpage


\section{Hecke algebras}
\label{sec:1}
\medskip


\subsection{}
\label{sec:11}

We begin with the definition of the
\textit{degenerate affine Hecke algebra\/} $\H_N\ts$
corresponding to the general linear group $\mathrm{GL}_N$ over a local
non-Archimedean field. This algebra has been introduced by Drinfeld
[D2], see also the work of Lusztig [L]. 
The complex associative algebra $\H_N$ is
generated by the symmetric group algebra $\CC\ts\Sym_N$ and by pairwise
commuting elements $u_1\lcd u_N$ with the cross relations for $p=1\lcd N-1$
and $q=1\lcd N$
\begin{align*}
\si_{p}\,u_q&=u_q\,\si_{p}
\quad\text{for}\quad
q\neq p\ts,p+1\,;
\\
\si_{p}\,u_p&=u_{p+1}\,\si_{p}-1\,.
\end{align*}
Here and in what follows $\si_p\in \Sym_N$ denotes the 
transposition of numbers $p$ and $p+1\ts$. 
More generally, $\si_{pq}\in \Sym_N$ will denote the 
transposition of the numbers $p$ and $q\ts$. The group algebra
$\CC\ts\Sym_N$ can be then regarded as a subalgebra in $\H_N\ts$.
Furthermore, it follows from the defining relations of $\H_N$
that a homomorphism $\H_N\to\CC\ts\Sym_N\ts$, identical
on the subalgebra $\CC\ts\Sym_N\subset \H_N\ts$,
can be defined by the assignments
\begin{equation}
\label{evalhecke}
u_p\mapsto\si_{1p}+\ldots+\si_{p-1,p}
\quad\text{for}\quad
p=1\lcd N\ts.
\end{equation}

We will also use the elements of the algebra $\H_N\ts$
\begin{equation}
\label{zp}
z_p=u_p-\si_{1p}-\ldots-\si_{p-1,p}
\quad\text{where}\quad
p=1\lcd N\ts.
\end{equation}
Notice that $z_p\mapsto0$ under the homomorphism $\H_N\to\CC\ts\Sym_N\ts$
defined by \eqref{evalhecke}. For every permutation $\si\in\Sym_N$ we have
\begin{equation}
\label{yrel}
\si\,z_p\,\si^{-1}=z_{\ts\si(p)}\,.
\end{equation}
The elements $z_1\lcd z_N$ do not commute, but satisfy
the commutation relations
\begin{equation}
\label{ycom}
[\,z_p\,,z_q\,]=\si_{pq}\,(z_p-z_q)\,.
\end{equation}
The relations \eqref{yrel} and \eqref{ycom} easily follow from
the above definition of the algebra $\H_N$, see for instance
\cite[Section 1]{KN1}.
Obviously, the algebra $\H_N$ is generated by $\CC\ts\Sym_N$
and the elements $z_1\lcd z_N\ts$.
Together with relations in $\CC\ts\Sym_N\ts$, 
\eqref{yrel} and \eqref{ycom} are defining relations of $\H_N$~too.

It immediately follows from the definition of $\H_N$ that
for any $f\in\CC$ an automorphism of this algebra,
identical on the subalgebra $\CC\ts\Sym_N\subset\H_N\ts$, 
can be defined by mapping 
\begin{equation}
\label{shift}
u_p\mapsto u_p+f
\quad\text{for}\quad
p=1\lcd N\ts.
\end{equation}
Note that then by \eqref{zp}
$$
z_p\mapsto z_p+f
\quad\text{for}\quad
p=1\lcd N\ts.
$$
By pulling the trivial one-dimensional module of $\CC\ts\Sym_N$ back through
the homomorphism~\eqref{evalhecke}, and further back through the automorphism
\eqref{shift}, we get a one-dimensional module of $\H_N\,$.
On the latter module
each of the elements $z_1\lcd z_N\in\H_N$ 
acts as multiplication by $f\ts$.
Let us denote this module by $S_f^{\hspace{0.8pt}f\ts+N}$,
this peculiar choice of notation will be justified next.

Fix a postive integer $m\ts$. Take any two sequences
$\la=(\ts\la_1\lcd\la_m\ts)$ and $\mu=(\ts\mu_1\lcd\mu_m\ts)$
of length $m$ of complex numbers.
For each $a=1\lcd m$ denote $\nu_a=\la_{\ts a}-\mu_{\ts a}$ and suppose that 
$\nu_a$ is a non-negative integer. 
Note that unlike in the Introduction,
here we allow the equality $\nu_a=0\,$.
We still suppose that $\nu_1+\ldots+\nu_m=N\ts$.
Denote $\nu=(\ts\nu_1\lcd \nu_m\ts)\,$.
Let $\Sym_\nu$ be the corresponding subgroup of the symmetric 
group $\Sym_N\ts$. 
This subgroup is naturally isomorphic to the direct product
$\Sym_{\nu_1}\times\ldots\times\Sym_{\nu_m}\ts$.
The tensor product 
$\H_{\nu_1}\otimes\ldots\otimes\H_{\nu_m}$
can be naturally identified 
with the subalgebra of $\H_N$ generated by the subgroup
$\Sym_\nu\subset\Sym_N$ and by all the pairwise commuting elements
$u_1\lcd u_N\ts$. Denote by $\H_\nu$ this subalgebra.
The induced module of $\H_N$
\begin{equation*}
\Ind_{\,\H_\nu}^{\,\H_N}\,
S_{\mu_1}^{\ts\la_1}
\ot
S_{\mu_2-1}^{\ts\la_2-1}
\ot\ldots\ot 
S_{\mu_m-m+1}^{\ts\la_m-m+1}
\end{equation*}
is called \textit{standard\/} and will be denoted by $S_{\mu}^{\ts\la}\,$.
In the particular case of $m=1$ and $\mu_1=f$
we have $\la_1=f+N$ and
$S_{\mu}^{\ts\la}=S_f^{\hspace{0.8pt}f\ts+N}$.
The reason to use in 
the definition of $S_{\mu}^{\ts\la}\,$
the numbers $\la_a-a+1$ and $\mu_a-a+1$
rather than $\la_a$ and $\mu_a$
will become clear in Subsection~\ref{sec:13}.


\subsection{}
\label{sec:12}

Let us now recall a construction due to Cherednik
\cite[Example 2.1]{C2}. It has been further developed by
Arakawa, Suzuki and Tsuchiya \cite[Subsection 5.3]{AST}. 
Let $U$ be any module over the complex general linear Lie algebra $\gl_m\ts$.
Let $E_{\ts ab}\in\gl_m$ with $a,b=1\lcd m$ be the
standard matrix units. We will also regard the matrix units $E_{\ts ab}$
as elements of the algebra $\End(\CC^{\ts m})$, this should not cause any 
confusion. Let us consider the tensor product $(\CC^{\ts m})^{\ot N}\ot\,U$
of $\gl_m\ts$-modules. Here each of the $N$ tensor factors $\CC^{\ts m}$ is
a copy of the natural $\gl_m$-module. We shall use the indices
$1\lcd N$ to label these $N$ tensor factors. 
For any index $p=1\lcd N$
we will denote by $E^{\ts(p)}_{ab}$ the operator on the vector space 
$(\CC^{\ts m})^{\ot N}$ acting as
\begin{equation}
\label{epab}
\id^{\ts\ot\ts(p-1)}\ot E_{\ts ab}\ot\id^{\ts\ot\ts(N-p)}\,.
\end{equation}

\begin{prop}
\label{1.1}
{\rm(i)}
Using the\/ $\gl_m\ts$-module structure of $U$,  
an action of the algebra $\H_N$ on the vector space 
$(\CC^{\ts m})^{\ot N}\ot\ts U$
is defined as follows: the symmetric 
group $\Sym_N\subset \H_N$ acts 
by permutations of the $N$ tensor factors\/ $\CC^{\ts m}$, and the element 
$z_p\in \H_N$ with $p=1\lcd N$~acts~as 
\begin{equation}
\label{yact}
\sum_{a,b=1}^m E_{\ts ab}^{\ts(p)}\ot E_{\ts ba}\,.
\end{equation}
{\rm(ii)} 
This action of $\H_N$ commutes with the diagonal action of\/ 
$\gl_m$ on $(\CC^{\ts m})^{\ot N}\ot\ts U\ts$.
\end{prop}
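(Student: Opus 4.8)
For part (i), the strategy is to check that the proposed operators satisfy the defining relations of $\H_N$ written in terms of $\CC\ts\Sym_N$ and the elements $z_1\lcd z_N\,$, namely the relations within $\CC\ts\Sym_N$ (automatic, since $\Sym_N$ acts genuinely by permuting tensor factors), the relation $\si\,z_p\,\si^{-1}=z_{\ts\si(p)}$ from \eqref{yrel}, and the commutation relations $[\,z_p\,,z_q\,]=\si_{pq}\,(z_p-z_q)$ from \eqref{ycom}. Write $Z_p=\sum_{a,b}E_{\ts ab}^{\ts(p)}\ot E_{\ts ba}$ for the operator \eqref{yact}. The conjugation relation is immediate: a permutation $\si\in\Sym_N$ sends $E_{\ts ab}^{\ts(p)}$ to $E_{\ts ab}^{\ts(\si(p))}$ on $(\CC^{\ts m})^{\ot N}$ and leaves the factor $E_{\ts ba}$ acting on $U$ untouched, so $\si\,Z_p\,\si^{-1}=Z_{\si(p)}\,$. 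For the commutation relation one expands
$$
[\,Z_p\,,Z_q\,]=\sum_{a,b,c,d}\,[\,E_{\ts ab}^{\ts(p)}\,E_{\ts cd}^{\ts(q)}\ot E_{\ts ba}\,E_{\ts dc}\;-\;E_{\ts cd}^{\ts(q)}\,E_{\ts ab}^{\ts(p)}\ot E_{\ts dc}\,E_{\ts ba}\,]\,,
$$
and since $p\neq q$ the factors $E^{\ts(p)}$ and $E^{\ts(q)}$ commute on $(\CC^{\ts m})^{\ot N}$, so the whole bracket reduces to $\sum_{a,b,c,d}E_{\ts ab}^{\ts(p)}\,E_{\ts cd}^{\ts(q)}\ot[\,E_{\ts ba}\,,E_{\ts dc}\,]$ acting on $U$. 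Using $[\,E_{\ts ba}\,,E_{\ts dc}\,]=\de_{\ts ad}\,E_{\ts bc}-\de_{\ts bc}\,E_{\ts da}$ and resumming, one collapses two of the four matrix-unit indices and recognizes the result as $\si_{pq}\,(Z_p-Z_q)$; the key combinatorial identity here is $\sum_a E_{\ts ab}^{\ts(p)}\,E_{\ts ca}^{\ts(q)}=\si_{pq}\,E_{\ts cb}^{\ts(q)}$ and its variant $\sum_a E_{\ts ab}^{\ts(p)}\,E_{\ts ad}^{\ts(q)}$, which is precisely how the transposition $\si_{pq}$ enters.

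For part (ii), the diagonal $\gl_m$-action is $\Delta(E_{\ts cd})=\sum_{p=1}^N E_{\ts cd}^{\ts(p)}\ot 1+1^{\ts\ot N}\ot E_{\ts cd}\,$. Commutativity with $\Sym_N$ is clear since permuting tensor factors commutes with the symmetric sum over $p\,$. Commutativity with each $Z_p$ is the Casimir-type computation: $[\,\Delta(E_{\ts cd})\,,Z_p\,]$ has a piece from the $p$-th tensor slot and a piece from the $U$ slot, and these cancel by the identity $[\,E_{\ts cd}\ot 1+1\ot E_{\ts cd}\,,\sum_{a,b}E_{\ts ab}\ot E_{\ts ba}\,]=0$ in $\gl_m\ot\gl_m\,$, which is the standard fact that the split Casimir element $\sum_{a,b}E_{\ts ab}\ot E_{\ts ba}$ is $\gl_m$-invariant. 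The terms from tensor slots $q\neq p$ contribute nothing because $E_{\ts cd}^{\ts(q)}$ commutes with $E_{\ts ab}^{\ts(p)}\ot E_{\ts ba}\,$.

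**The main obstacle** is purely bookkeeping in the proof of \eqref{ycom}: one must carefully track the four summation indices $a,b,c,d$ through the commutator $[\,E_{\ts ba}\,,E_{\ts dc}\,]$, split into the two Kronecker-delta terms, and correctly identify which partial sums over tensor slots produce the transposition $\si_{pq}$ versus which produce $Z_p$ or $Z_q\,$; getting the signs and the assignment of upper indices $(p)$ versus $(q)$ right is where an error would creep in. Everything else is routine once the invariance of the split Casimir and the identity $\sum_a E_{\ts ab}^{\ts(p)}\,E_{\ts ca}^{\ts(q)}=\si_{pq}\,E_{\ts cb}^{\ts(q)}$ are in hand.
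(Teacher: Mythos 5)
Your proposal is correct and is the standard direct verification that the paper itself delegates to \cite[Section 1]{KN1}: for (i) one checks \eqref{yrel} and \eqref{ycom} for the operators $Z_p$ against the stated presentation of $\H_N$ by $\CC\ts\Sym_N$ and $z_1\lcd z_N\ts$, and for (ii) one uses the $\gl_m\ts$-invariance of the split Casimir element exactly as you describe. The one slip is in your auxiliary identity, which should read $\sum_a E_{\ts ab}^{\ts(p)}\ts E_{\ts ca}^{\ts(q)}=\si_{pq}\,E_{\ts cb}^{\ts(p)}=E_{\ts cb}^{\ts(q)}\,\si_{pq}$ (your superscript $(q)$ in the left-multiplied form should be $(p)$); with this correction the term coming from $\de_{\ts ad}\ts E_{\ts bc}$ collapses to $\si_{pq}\ts Z_p$ and the term from $-\ts\de_{\ts cb}\ts E_{\ts da}$ to $-\,\si_{pq}\ts Z_q\ts$, giving \eqref{ycom}.
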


For a proof of this proposition see for instance \cite[Section 1]{KN1}.
By using Proposition \ref{1.1} we get a functor
$
\E_N: U\mapsto(\CC^{\ts m})^{\ot N}\ot U
$
from the category of all $\gl_m\ts$-modules 
to the category of bimodules over $\gl_m$ and $\H_N\ts$.
We will also use a version of this proposition for the 
special linear Lie algebra $\sl_m$ istead of $\gl_m\ts$.
Denote $I=E_{\ts 11}+\ldots+E_{\ts mm}$ 
so that $\gl_m=\sl_m\op\CC\,I\,$.
Moreover, we have
\begin{equation}
\label{i}
\sum_{a,b=1}^m 
E_{\ts ab}\ot E_{\ts ba}\,\in\,\frac1m\,\,I\ot I\,+\,\sl_m\ot\sl_m\,.
\end{equation}
Therefore an action of 
\begin{equation}
\label{yacts}
\sum_{a,b=1}^m E_{\ts ab}^{\ts(p)}\ot E_{\ts ba}
-\frac1m\ \id^{\,\ot\ts N}\ot I
\end{equation}
can be defined on the vector space $(\CC^{\ts m})^{\ot N}\ot\,U$ 
by using only the $\sl_m\ts$-module structure of $U$.
Because the element $I\in\gl_m$ is central, 
the operators \eqref{yacts} with $p=1\lcd N$ satisfy the 
same commutation relations \eqref{ycom}
as the operators \eqref{yact} respectively instead of $z_1\lcd z_N$.
  
\begin{cor}
\label{1.2}
{\rm(i)}
By using the\/ $\sl_m\ts$-module structure of\/ $U$,  
an action of the algebra $\H_N$ on the vector space 
$(\CC^{\ts m})^{\ot N}\ot\ts U$
is defined as follows: the 
group $\Sym_N\subset \H_N$ acts 
by permutations of the $N$ tensor factors\/ $\CC^{\ts m}$, and the element 
$z_p\in \H_N$ with $p=1\lcd N$~acts~as \eqref{yacts}.
\\
{\rm(ii)} 
This action of $\H_N$ commutes with the diagonal action of\/ 
$\sl_m$ on $(\CC^{\ts m})^{\ot N}\ot\ts U\ts$.
\end{cor}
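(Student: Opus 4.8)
The plan is to deduce Corollary \ref{1.2} from Proposition \ref{1.1} by replacing the given $\sl_m$-module $U$ by an auxiliary $\gl_m$-module. The point that makes this work is that the operator \eqref{yacts} on $(\CC^{\ts m})^{\ot N}\ot U$ depends only on the $\sl_m$-module structure of $U$, which is exactly what the inclusion \eqref{i} records. Writing $\sum_{a,b}E_{\ts ab}\ot E_{\ts ba}=\frac1m\,I\ot I+t$ with $t\in\sl_m\ot\sl_m$, and using that $I$ acts on $\CC^{\ts m}$ as the identity so that $\sum_a E^{\ts(p)}_{\ts aa}=\id^{\ts\ot N}$, one sees that \eqref{yacts} equals the operator $t^{\ts(p)}$ obtained by letting the first tensor leg of $t$ act in the $p$-th copy of $\CC^{\ts m}$ and the second leg act on $U$; this expression involves only $\sl_m$.

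Next, since $\gl_m=\sl_m\op\CC\,I$ as Lie algebras with $I$ central, every $\sl_m$-module $U$ extends to a $\gl_m$-module $\widetilde U$ by letting $I$ act as zero. On $\widetilde U$ the correction term $\frac1m\,\id^{\ts\ot N}\ot I$ vanishes, so, in view of the previous paragraph, the operator \eqref{yact} attached to $\widetilde U$ is literally the operator \eqref{yacts} attached to $U$; moreover the diagonal action of $\sl_m$ on $(\CC^{\ts m})^{\ot N}\ot U$ is the restriction of the diagonal action of $\gl_m$ on $(\CC^{\ts m})^{\ot N}\ot\widetilde U$. Hence Proposition \ref{1.1}, applied to the module $\widetilde U$, yields both assertions at once: its part (i) gives the $\H_N$-action of part (i) here, and its part (ii), restricted from $\gl_m$ down to $\sl_m$, gives the commutativity asserted in part (ii).

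A more hands-on alternative, not using $\widetilde U$, is to check the defining relations of $\H_N$ for the assignment $z_p\mapsto$\eqref{yacts} directly: the relations inside $\CC\ts\Sym_N$ are clear, relation \eqref{ycom} is the remark made just before the statement (the subtracted term $\frac1m\,\id^{\ts\ot N}\ot I$ is central, so it drops out of all commutators), and relation \eqref{yrel} holds because that same term is fixed under conjugation by $\Sym_N$; part (ii) then follows from the fact that $t$ is invariant under the adjoint action of $\sl_m$, being the difference of the $\gl_m$-invariant tensor $\sum_{a,b}E_{\ts ab}\ot E_{\ts ba}$ and the central tensor $\frac1m\,I\ot I$, so that $t^{\ts(p)}$ commutes with the diagonal $\sl_m$-action while $\Sym_N$ commutes with it as well. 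I do not anticipate a real difficulty either way; the only thing that needs a moment's care, and the reason \eqref{i} was set up in the first place, is the opening observation that \eqref{yacts} is intrinsic to the $\sl_m$-structure of $U$.
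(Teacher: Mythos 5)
Your proposal is correct and takes essentially the same route as the paper, whose (implicit) justification for Corollary \ref{1.2} is exactly the two remarks you open with: the decomposition \eqref{i} shows that \eqref{yacts} is intrinsic to the $\sl_m\ts$-structure of $U$, and the centrality of $I$ shows that the operators \eqref{yacts} satisfy the same relations \eqref{yrel} and \eqref{ycom} as \eqref{yact}, so everything follows from Proposition \ref{1.1}. Your reduction via the auxiliary $\gl_m\ts$-module $\widetilde U$ on which $I$ acts as zero is just a clean packaging of that same deduction.
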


Using Corollary \ref{1.2} we get a functor
$
\F_N: U\mapsto(\CC^{\ts m})^{\ot N}\ot U
$
from the category of all $\sl_m\ts$-modules 
to the category of bimodules over $\sl_m$ and $\H_N\ts$.
Our principal tool will be an analogue of
this functor for the affine Lie algebra $\slhat_m$ instead of 
$\sl_m\,$. The role of the degenerate affine algebra 
$\H_N$ will be then played by the 
{trigonometric Cherednik algebra\/}~$\C_N\,$.


\subsection{}
\label{sec:13}

Consider the \textit{triangular decomposition}
of the Lie algebra $\gl_m\ts$,
\begin{equation*}
\gl_m=\n\op\t\op\np\ts.
\end{equation*}
Here $\t$ is the Cartan subalgebra of $\gl_m$ with the basis vectors
$E_{\ts 11}\,\lcd E_{\ts mm}\ts$. 
Every element of the vector space $\t^{\ts\ast}$ dual to
$\t$ is called a \textit{weight\/} of $\gl_m\ts$. 
We will regard any sequence $\mu=(\ts\mu_1\lcd\mu_m\ts)$
of length $m$ of complex numbers
as such a weight, by setting $\mu\ts(E_{\ts aa})=\mu_a$ 
for $a=1\lcd m\ts$. For any $\gl_m\ts$-module $U$,
its subspace consisting of all vectors of weight $\mu$  
is denoted by $U^\mu$. In the above display
$\n$ is the nilpotent
subalgebra of $\gl_m$ spanned by all the 
elements $E_{\ts ab}$ with $a>b\,$, while
$\np$ is spanned by all $E_{\ts ab}$ with $a<b\,$.
We will denote by $U_{\n}$ the vector space $U/\ts\n\,U$
of the {\it coinvariants\/} 
of the action of the subalgebra $\n$ on $U$.
Note that the Cartan subalgebra $\h\subset\gl_m$ acts on 
the vector space $U_{\n}\,$. 

Now consider the Verma module $M_{\ts\mu}$ of the Lie algebra
$\gl_m\,$. It can be described as the quotient of the universal enveloping 
algebra $\Ur(\ts\gl_m\ts)$ by the left ideal generated by all the elements
$E_{\ts ab}$ with $a<b$ and by the elements $E_{\ts aa}-\mu_a\ts$.
The elements of the Lie algebra $\gl_m$ act on this quotient
via left multiplication. 
Let us apply the functor $\E_N$ to the $\gl_m$-module $U=M_{\ts\mu}\ts$.
By using Proposition \ref{1.2} 
we obtain a bimodule $\E_N(M_\mu)$
of $\gl_m$ and $\H_N\ts$. For any $\la=(\ts\la_1\lcd\la_m\ts)$
consider the space 
$\E_N(\ts M_{\ts\mu})_{\ts\n}^{\ts\la}$
of those coinvariants
of this bimodule relative to $\n$ which are of the weight $\la\,$.
This space comes with an action of the algebra $\H_N\ts$. 

\begin{prop}
\label{1.3}
The $\H_N$-module $\E_N(\ts M_{\ts\mu})_{\ts\n}^{\ts\la}$
is isomorphic to the standard module $S_{\mu}^{\ts\la}\,$.
\end{prop}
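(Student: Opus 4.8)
The plan is to exhibit a single vector $w\in\E_N(\ts M_{\ts\mu})_{\ts\n}^{\ts\la}$ on which the parabolic subalgebra $\H_\nu\subset\H_N$ acts through exactly the one-dimensional character defining $S_{\mu}^{\ts\la}$, so that Frobenius reciprocity produces an $\H_N$-homomorphism $S_{\mu}^{\ts\la}\to\E_N(\ts M_{\ts\mu})_{\ts\n}^{\ts\la}$, and then to check that this map is bijective by a dimension count. Write $V=(\CC^{\ts m})^{\ot N}$, let $v_\mu$ be the highest weight vector of $M_\mu$, and put $\nu=(\ts\nu_1\lcd\nu_m\ts)$ with $\nu_c=\la_c-\mu_c$. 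Since $\H_N\cong\CC\ts\Sym_N\ot\CC[\ts u_1\lcd u_N\ts]$ as vector spaces, $\H_N$ is free of rank $|\Sym_N/\Sym_\nu|$ over $\H_\nu$, whence $\dim S_{\mu}^{\ts\la}=|\Sym_N/\Sym_\nu|=\binom{N}{\nu_1,\ldots,\nu_m}$. On the other side, $M_\mu\cong\Ur(\n)$ as $\Ur(\n)$-modules, so the diagonal module $V\ot M_\mu$ is $\Ur(\n)$-free of rank $\dim V$ and hence $\dim(V\ot M_\mu)_{\ts\n}=m^N$. Moreover the map $v\mapsto[\ts v\ot v_\mu\ts]$ from $V$ to $(V\ot M_\mu)_{\ts\n}$ is surjective: for $Y\in\n$ one has $v\ot Y v_\mu\equiv-\sum_q(Y^{(q)}v)\ot v_\mu$ modulo $\n\,(V\ot M_\mu)$, and iterating over a PBW monomial of $\Ur(\n)$ rewrites every $[\ts v\ot Xv_\mu\ts]$ in the form $[\ts v'\ot v_\mu\ts]$. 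As $v\ot v_\mu$ has $\h$-weight $\mathrm{wt}(v)+\mu$, the subspace $V^{\ts\nu}\ot v_\mu$ spans $\E_N(\ts M_{\ts\mu})_{\ts\n}^{\ts\la}$; comparing the weight-space dimensions of $(V\ot M_\mu)_{\ts\n}$ with its total dimension $m^N$ forces $\dim\E_N(\ts M_{\ts\mu})_{\ts\n}^{\ts\la}=\dim V^{\ts\nu}=\binom{N}{\nu_1,\ldots,\nu_m}$, so the two sides have the same finite dimension.

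Now let $w=[\ts v_\nu\ot v_\mu\ts]$, where $v_\nu=e_1^{\ot\nu_1}\ot\cdots\ot e_m^{\ot\nu_m}$ is constant equal to $e_c$ on the $c$-th of the consecutive segments of lengths $\nu_1\lcd\nu_m$. Then $\Sym_\nu$ fixes $v_\nu$, hence $w$. The main computation is that of $z_p\ts w$ for $p$ in the $c$-th segment, using \eqref{yact}. Because $v_\mu$ is a highest weight vector, only the terms with $a\le c$ survive in $\sum_a(E_{\ts ac}^{(p)}v_\nu)\ot(E_{\ts ca}v_\mu)$; the term $a=c$ contributes $\mu_c\ts w$, and for $a<c$ one has $E_{\ts ca}\in\n$, so reducing $(E_{\ts ac}^{(p)}v_\nu)\ot(E_{\ts ca}v_\mu)$ modulo $\n\,(V\ot M_\mu)$ as above yields $-\,w-\sum_q\si_{qp}\ts w$ with $q$ running over the $a$-th segment. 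Summing over $a<c$,
$$
z_p\,w=(\ts\mu_c-c+1\ts)\,w-\sum_q\si_{qp}\,w\,,
$$
the sum now over all $q$ in the segments $1\lcd c-1$. Equivalently, writing $u_p=z_p+\si_{1p}+\ldots+\si_{p-1,p}$, the operator $u_p$ minus the $\si_{qp}$ with $q$ inside the $c$-th segment acts on $w$ by $\mu_c-c+1$; this operator is precisely the corresponding generator of the factor $\H_{\nu_c}$ of $\H_\nu=\H_{\nu_1}\ot\cdots\ot\H_{\nu_m}$. Hence $\H_\nu$ acts on $w$ through the character of the one-dimensional module $S_{\mu_1}^{\ts\la_1}\ot S_{\mu_2-1}^{\ts\la_2-1}\ot\cdots\ot S_{\mu_m-m+1}^{\ts\la_m-m+1}$, which also explains the shifts $-c+1$ in the definition of $S_{\mu}^{\ts\la}$.

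By Frobenius reciprocity the assignment ``cyclic generator $\mapsto w$'' defines an $\H_N$-homomorphism $\Phi\colon S_{\mu}^{\ts\la}\to\E_N(\ts M_{\ts\mu})_{\ts\n}^{\ts\la}$. Its image is $\H_N$-stable and contains $\Sym_N\cdot w=\{\,[\ts\si\,v_\nu\ot v_\mu\ts]:\si\in\Sym_N\,\}$; the vectors $\si\,v_\nu$ exhaust the monomial basis of $V^{\ts\nu}$, so by the first paragraph the image is all of $\E_N(\ts M_{\ts\mu})_{\ts\n}^{\ts\la}$. Since source and target have the same finite dimension, $\Phi$ is an isomorphism.

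The step I expect to require real care is the evaluation of $z_p\,w$: one must track exactly which terms $(E_{\ts ac}^{(p)}v_\nu)\ot(E_{\ts ca}v_\mu)$ in \eqref{yact} are nonzero and follow the combinatorics of the successive reductions modulo $\n\,(V\ot M_\mu)$, so as to see both the transpositions $\si_{qp}$ with $q$ in an earlier segment and the precise scalar $\mu_c-c+1$ come out. The remaining ingredients — the dimension count and the Frobenius reciprocity adjunction — are routine.
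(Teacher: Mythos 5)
Your proof is correct, but it takes a genuinely different route from the paper's. The paper disposes of Proposition \ref{1.3} in one line: it invokes \cite[Theorem 1.3]{KN6} repeatedly to reduce to the case $m=1$, where the claim is immediate --- in effect using a compatibility of the functor $\E_N$ with parabolic induction, proved elsewhere. You instead give a direct, self-contained argument: a dimension count on both sides (freeness of $\H_N$ over $\H_\nu$ giving $\dim S_{\mu}^{\ts\la}=[\ts\Sym_N:\Sym_\nu\ts]$, and the tensor identity making $(\CC^{\ts m})^{\ot N}\ot M_\mu$ a free $\Ur(\n)$-module so that the $\la$-weight space of coinvariants has dimension $\dim\ts((\CC^{\ts m})^{\ot N})^{\ts\nu}$), followed by the explicit evaluation of $z_p$ on $[\ts v_\nu\ot v_\mu\ts]$ and Frobenius reciprocity. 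I checked the key computation: for $p$ in the $c$-th segment only $a\le c$ survive in $\sum_a(E_{\ts ac}^{(p)}v_\nu)\ot(E_{\ts ca}v_\mu)$, the term $a=c$ gives $\mu_c$, and each $a<c$ reduces modulo $\n\ts$-coinvariants to $-1$ plus transpositions into the $a$-th segment, yielding $z_p\ts w=(\mu_c-c+1)\ts w-\sum_q\si_{qp}\ts w$; this reproduces the character $u_p\mapsto\mu_c-c+h$ stated in the Introduction and correctly accounts for the shifts $-c+1$ in the definition of $S_{\mu}^{\ts\la}$. What each approach buys: yours is elementary, exhibits the cyclic vector explicitly, and explains where the shifts come from; the paper's is shorter (modulo the external reference) and sits inside the structural framework of \cite{KN6} that is reused for the affine analogue, Proposition \ref{2.3}. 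One cosmetic point: when you identify the generator of the factor $\H_{\nu_c}$ you should say ``$u_p$ minus the $\si_{qp}$ with $q$ in the $c$-th segment \emph{and} $q<p$''; the intended meaning is clear but the restriction $q<p$ should be stated.
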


\begin{proof}
By repeatedly using \cite[Theorem 1.3]{KN6} the proof 
reduces to its particular case when $m=1\ts$. 
In the latter case the proposition is immediate.
\end{proof}

Let us now give a counterpart \cite[Proposition 5.3.1]{AST}
of Proposition \ref{1.3}
for the Lie algebra $\sl_m$ instead of $\gl_m\,$.
We will denote by $\h$ the Cartan subalgebra of $\sl_m$ with 
the basis vectors $E_{\ts 11}-E_{\ts 22}\,\lcd 
E_{\ts m-1\ts,\ts m-1}-E_{\ts mm}\ts$. Note that $\h\subset\t\,$.
We have the triangular decomposition
\begin{equation*}
\sl_m=\n\op\h\op\np
\end{equation*}
where $\n$ and $\np$ are the same as above.
We will denote respectively by $\al$ and $\be$
the restrictions of the weights $\la$ and $\mu$ of $\gl_m$
to the subspace $\h\subset\t\,$. Thus $\al$ and $\be$ will 
be weights of $\sl_m\,$. 
Note that the
restriction of the $\gl_m\ts$-module $M_{\ts\mu}$ to the subalgebra
$\sl_m\subset\gl_m$ is isomorphic to the Verma module $M_{\ts\be}\,$,
while the central element $I\in\gl_m$ acts on $M_\mu$ as multiplication
by $\mu_1+\ldots+\mu_m\,$. Therefore by using the definition of 
$\F_N$ we get a corollary to Proposition~\ref{1.3}. 

\begin{cor}
\label{1.4}
The $\H_N$-module $\F_N(\ts M_{\ts\be}\ts)_{\,\ts\n}^{\,\al}$
is isomorphic to the pullback of standard module $S_{\mu}^{\ts\la}$
relative to the automorphism \eqref{shift} of the algebra\/ $\H_N$ where\/
$
f=-\,(\ts\mu_1+\ldots+\mu_m\ts)\ts/\ts m\,.
$
\end{cor}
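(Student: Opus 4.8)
The plan is to deduce the statement directly from Proposition \ref{1.3} by tracking the difference between the functors $\E_N$ and $\F_N$ on the modules involved. First I would observe that, as a vector space, $\F_N(\ts M_{\ts\be}\ts)=(\CC^{\ts m})^{\ot N}\ot\ts M_{\ts\be}$ coincides with $\E_N(\ts M_{\ts\mu})=(\CC^{\ts m})^{\ot N}\ot\ts M_{\ts\mu}\ts$, since the restriction of the $\gl_m\ts$-module $M_{\ts\mu}$ to $\sl_m$ is the Verma module $M_{\ts\be}\,$; moreover the $\sl_m\ts$-action — and hence the action of $\ts\n\ts$ — is literally the same on both, so their spaces of $\ts\n\ts$-coinvariants agree as vector spaces. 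Next I would identify the relevant weight subspaces. The central element $I\in\gl_m$ acts on $(\CC^{\ts m})^{\ot N}$ as $N\,\id$ and on $M_{\ts\mu}$ as the scalar $\mu_1+\ldots+\mu_m\,$, hence on $\E_N(\ts M_{\ts\mu})_{\ts\n}$ as the scalar $N+\mu_1+\ldots+\mu_m=\la_1+\ldots+\la_m=\la(I)\ts$. Since $\t=\h\op\CC\,I\,$, it follows that the $\ts\t$-weight $\ts\la\ts$ subspace of $\E_N(\ts M_{\ts\mu})_{\ts\n}$ equals its $\ts\h$-weight $\ts\al\ts$ subspace, that is $\E_N(\ts M_{\ts\mu})_{\ts\n}^{\ts\la}=\F_N(\ts M_{\ts\be}\ts)_{\,\ts\n}^{\,\al}$ as subspaces of the common coinvariant space.

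It then remains to compare the two actions of $\H_N\ts$ on this space. The subgroup $\Sym_N\subset\H_N$ acts on both by permutations of the $N$ tensor factors $\CC^{\ts m}\ts$, so these coincide. For the generator $z_p$ the $\E_N\ts$-action is by \eqref{yact} while the $\F_N\ts$-action is by \eqref{yacts}; by \eqref{i} these two operators differ by $\frac1m\,\id^{\ot N}\ot I$, which on $M_{\ts\mu}=M_{\ts\be}$ is multiplication by $\frac1m\,(\ts\mu_1+\ldots+\mu_m\ts)=-f\ts$. Hence on $\F_N(\ts M_{\ts\be}\ts)_{\,\ts\n}^{\,\al}$ the element $z_p$ acts exactly as $z_p+f$ acts on $\E_N(\ts M_{\ts\mu})_{\ts\n}^{\ts\la}\ts$. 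Comparing with the definition \eqref{shift} of the automorphism of $\H_N$, and recalling that \eqref{shift} fixes $\CC\ts\Sym_N$ pointwise, this says precisely that the $\H_N\ts$-module $\F_N(\ts M_{\ts\be}\ts)_{\,\ts\n}^{\,\al}$ is the pullback of the $\H_N\ts$-module $\E_N(\ts M_{\ts\mu})_{\ts\n}^{\ts\la}$ relative to \eqref{shift} with $f=-\,(\ts\mu_1+\ldots+\mu_m\ts)\ts/\ts m\ts$. Applying Proposition \ref{1.3} to replace $\E_N(\ts M_{\ts\mu})_{\ts\n}^{\ts\la}$ by $S_{\mu}^{\ts\la}$ then gives the corollary.

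I do not anticipate a real obstacle; the one place calling for a word of care is compatibility with passing to $\ts\n\ts$-coinvariants. The operators \eqref{yact} and \eqref{yacts} descend to the coinvariant spaces because the $\H_N\ts$-action commutes with the diagonal action of $\gl_m$, respectively of $\sl_m$, by Proposition \ref{1.1}(ii), respectively Corollary \ref{1.2}(ii), and therefore preserves $\ts\n\ts U\ts$; and $I$ descends because it is central in $\gl_m\ts$. Consequently on the coinvariant space the two actions of $z_p$ continue to differ by the scalar $f$ exactly as above.
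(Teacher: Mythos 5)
Your proof is correct and follows essentially the same route as the paper, which simply remarks before stating the corollary that $M_{\ts\mu}|_{\ts\sl_m}\cong M_{\ts\be}$ and that $I$ acts on $M_{\ts\mu}$ as $\mu_1+\ldots+\mu_m\ts$, and then invokes the definition of $\F_N$ together with Proposition \ref{1.3}. You have merely filled in the details the paper leaves implicit — in particular the identification of the weight subspaces via the action of $I$ and the sign check for the shift $f$ — and these are all handled correctly.
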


Note that by pulling the standard module $S_{\mu}^{\ts\la}$
back through the automorphism \eqref{shift} with any $f$
we get another standard module, corresponding to the sequences
$(\ts\la_1+f\lcd\la_m+f\ts)$ and $(\ts\mu_1+f\lcd\mu_m+f\ts)$
instead of $\la$ and $\mu\,$. However, we will use
Corollary \ref{1.4} as stated.


\section{Cherednik algebras}
\label{sec:2}
\medskip


\subsection{}
\label{sec:21}

Let $\P_N=\CC[\ts x_1\ts,x_1^{-1}\lcd x_N\ts,x_N^{-1}\ts]$ be the 
ring of of Laurent polynomials in $N$ variables $x_1\lcd x_N$ 
with complex coefficients. We will denote by $\d_1\lcd\d_N$
the derivation operators in $\P_N$ relative to these variables. 
The \textit{trigonometric Cherednik algebra\/} $\C_N$
depending on a parameter $\ka\in\CC$
is the complex associative algebra
generated by $\H_N$ and $\P_N\ts$, 
subject to the relations
$
\si\,x_p\,\si^{-1}=x_{\ts\si(p)}
$
for all $\si\in\Sym_N$ and to the commutation relations
\begin{align*}
[\,z_{p}\,,x_q\,]&=-\,x_p\,\si_{pq}
\quad\text{for}\quad
q\neq p\,;
\\[4pt]
[\,z_{p}\,,x_p\,]&=\ka\,x_p+\sum_{r\neq p}\,x_p\,\si_{pr}\,.
\end{align*}
We can also employ the pairwise commuting generators 
$u_1\lcd u_N\in\H_N$ 
instead of $z_1\lcd z_N\ts$; 
see the definition \eqref{zp}.
Then instead of the above displayed defining relations 
in $\C_N$ we get
\begin{align*}
[\,u_{p}\,,x_q\,]&=-\,x_q\,\si_{pq}
\quad\text{for}\quad
q<p\,;
\\[4pt]
[\,u_{p}\,,x_q\,]&=-\,x_p\,\si_{pq}
\quad\text{for}\quad
q>p\,;
\\[4pt]
[\,u_{p}\,,x_p\,]&=\ka\,x_p+
\sum_{r<p}\,x_r\,\si_{pr}+
\sum_{r>p}\,x_p\,\si_{pr}\,.
\end{align*}
The latter set of defining relations shows that the mapping
\eqref{shift} extends to an automorphism of $\C_N$
identical in the subalgebras $\CC\ts\Sym_N$ and $\P_N\,$.
Further, according to \cite[Theorem 1.3]{EG}
multiplication in the algebra $\C_N$ yields a bijective linear~map
\begin{equation*}
\P_N\ot
\CC\ts\Sym_N\ot\CC[\ts u_1\lcd u_N\ts]
\,\to\,\C_N\,.
\end{equation*}
Next we will state generalizations of Proposition \ref{1.1} and
Corollary \ref{1.2}. They go back to 
\cite{C1}.


\subsection{}
\label{sec:22}

First consider the affine Lie algebra $\glhat_m$ over the feld $\CC\,$.
We will define it
as a central extension of the {\it current Lie algebra\/}
$\gl_m\ts[\,t,t^{-1}\ts]$ by a one-dimensional complex vector space with 
a fixed basis element which will be denoted by $C\ts$. Here $t$ is 
a formal variable.
Choose the basis of $\gl_m\ts[\,t,t^{-1}\ts]$ consisting of the elements
$E_{\ts cd}\,t^{\,j}$ where $c,d=1\lcd m$ whereas 
$j$ ranges over $\ZZ\ts$.
The commutators in the Lie algebra $\gl_m\ts[\,t,t^{-1}\ts]$ 
are taken pointwise so that
$$
[\,E_{\ts ab}\,t^{\,i},E_{\ts cd}\,t^{\,j}\,]=
(\ts\de_{bc}\ts E_{\ts ad}-\de_{da}\,E_{\ts cb}\ts)\,t^{\,i+j}
$$
for the basis elements. In the extended Lie algebra $\glhat_m$ 
by definition we have the relations
\begin{equation}
\label{hatcom}
[\,E_{\ts ab}\,t^{\,i},E_{\ts cd}\,t^{\,j}\,]=
(\ts\de_{bc}\,E_{\ts ad}-\de_{da}\,E_{\ts cb}\ts)\,t^{\,i+j}+
i\,\de_{\ts i,-j}\,\de_{bc}\,\de_{da}\,C\ts.
\end{equation}

We will also work with the affine Lie algebra $\slhat_m\,$.  
This is a subalgebra of $\glhat_m$ spanned by
the subspace $\sl_m\ts[\,t,t^{-1}\ts]\subset\gl_m\ts[\,t,t^{-1}\ts]$
and by the central element $C\ts$. 
Let $\hh$ be the Abelian subalgebra 
of $\slhat_m$ spanned by $C$ and by the Cartan subalgebra 
$\h\subset\sl_m\,$. The vector spaces
$$
\nt=\n\op t^{-1}\,\sl_m[\,t^{-1}\ts]
\,\quad\text{and}\ \quad
\ntp=\np\op\ts t\,\ts\sl_m[\ts t\ts]
$$
are also Lie subalgebras of $\slhat_m$ 
by the relations \eqref{hatcom}. As a vector space,
\begin{equation*}
\,\slhat_m=\nt\op\hh\op\ntp.
\end{equation*}

Let $V$ be any module of $\glhat_m$ such that
for any given vector in $V$, there exists a degree 
$i$ such that the subspace $t^{\,i}\,\gl_m[t]\subset\glhat_m$ 
annihilates the vector. Consider the vector space
\begin{equation}
\label{w}
W=\P_N\ot
(\CC^{\ts m})^{\ot N}\ot\ts V\ts.
\end{equation}
Due to our condition on $V$ for any $p=1\lcd N$ 
there is a well defined linear operator on $W$
\begin{equation}
\label{gap}
\sum_{i=0}^\infty\, 
\sum_{a,b=1}^m\,
x_p^{\,-i}\ot E_{\ts ab}^{\ts(p)}\ot E_{\ts ba}\,t^{\,i}\,.
\end{equation}
Here $E_{\ts ab}^{\ts(p)}$ is the operator \eqref{epab} acting on
$(\CC^{\ts m})^{\ot N}\ts$. Further, 
the symmetric group $\Sym_N$ acts on the tensor factor $\P_N$ of $W$
by permutations of the variables $x_1\lcd x_N\,$.
There is another copy of the group $\Sym_N$ acting on 
the $N$ tensor factors $\CC^{\ts m}$ of $W$ by permutation.
Using these two actions of~$\Sym_N$
for any $p=1\lcd N$ introduce the \textit{Cherednik operator} on $W$
\begin{gather}
\nonumber
\ka\,x_p\,\d_p\ot\id^{\,\ot N}\ot\id\,+\,
\sum_{r\neq p}\,\frac{x_p}{x_p-x_r}\,(1-\si_{pr})\ot\si_{pr}\ot\id\ +
\\
\label{up}
\sum_{i=0}^\infty\,
\sum_{a,b=1}^m\, 
x_p^{\,-i}\ot E_{\ts ab}^{\ts(p)}\ot E_{\ts ba}\,t^{\,i}\,.
\end{gather}

\enlargethispage{36pt}

The vector space
$
\P_N\ot(\CC^{\ts m})^{\ot N}
$
can be naturally identified with the tensor product of $N$ copies
of the space $\CC^{\ts m}[\ts t\ts,t^{-1}\ts]\,$.
The latter space can be regarded as a $\glhat_m\ts$-module
where the central element 
$C$ acts as zero.
By taking the tensor product of $N$ copies of this module
with $V$ we turn the vector space 
$W$ to a $\glhat_m\ts$-module.
The element $E_{\ts cd}\,t^{\,j}\in\glhat_m$ acts on~$W$~as
\begin{equation}
\label{ecdj}
\id\ot\id^{\,\ot N}\ot E_{\ts cd}\,t^{\,j}\,+\,
\sum_{q=1}^N\,x_q^{\,j}\ot E_{\ts cd}^{\ts(q)}\ot\id\,.
\end{equation}

For any complex number $\ell\ts$, a module of the Lie algebra
$\glhat_m$ or $\slhat_m$ is said to be of \emph{level\/} $\ell$ 
if the element $C$ 
acts on this module as that complex number.
In particular, the $\glhat_m\ts$-module
$\CC^{\ts m}[\ts t\ts,t^{-1}\ts]$ used 
above is of level zero.
We can now state the main properties
of Cherednik operators on $W$ from \cite{AST,S2}.
These properties immediately follow from \cite[Proposition 2.3]{KN6}. 

\begin{prop}
\label{2.1}
{\rm(i)}
Using the\/ $\glhat_m\ts$-module structure on $V$, 
an action of the algebra\/ $\C_N$ on the vector space $W$
is defined as follows: 
the elements $x_p\,,x_p^{-1}\in\C_N$ act via mutiplication in\/
$\CC[\ts x_1\ts,x_1^{-1}\lcd x_N\ts,x_N^{-1}\ts]\,,$
the group $\Sym_N\subset\H_N$ acts 
by simultaneous permutations of the variables $x_1\lcd x_N$ and of the $N$ 
tensor factors\/ $\CC^{\ts m}$,
and the element $z_p\in\H_N$ acts as~\eqref{up}. 
\\
{\rm(ii)} 
This action of\/ $\C_N$ on $W$ commutes with the action of 
the Lie subalgebra\/ $\gl_m\subset\glhat_m\,$.
\\[1pt]
{\rm(iii)}
If\/ $V$ has level\/ $\ka-m$ then the action of\/ $\C_N$
preserves the subspace\, 
$\nt\ts\,W\subset W$.
\end{prop}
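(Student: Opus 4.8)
The plan is to deduce all three parts of Proposition~\ref{2.1} from \cite[Proposition 2.3]{KN6} by reducing the $\C_N$-action on $W=\P_N\ot(\CC^{\ts m})^{\ot N}\ot V$ to the setting of that reference. The key observation is that $W$ itself is built from a single auxiliary $\glhat_m$-module: writing $\CC^{\ts m}[\ts t\ts,t^{-1}\ts]$ for the level-zero evaluation-type module on the first $N$ tensor factors, the space $\P_N\ot(\CC^{\ts m})^{\ot N}$ is the $N$-fold tensor power of $\CC^{\ts m}[\ts t\ts,t^{-1}\ts]$ as a $\glhat_m$-module, and tensoring with $V$ (which has the finiteness property required in Subsection~\ref{sec:22}) gives a $\glhat_m$-module to which the construction of \cite{KN6} applies. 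The operator \eqref{up} is exactly the Cherednik operator attached to the index $p$ in that construction: the first summand $\ka\,x_p\,\d_p$ is the scaling term governed by the parameter $\ka$, the middle sum $\sum_{r\neq p}\frac{x_p}{x_p-x_r}(1-\si_{pr})\ot\si_{pr}$ is the Demazure-type term coming from the crossed product $\Sym_N\ltimes\P_N$, and the infinite sum $\sum_{i\ge 0}\sum_{a,b}x_p^{-i}\ot E^{(p)}_{ab}\ot E_{ba}t^{\,i}$ is precisely the operator \eqref{gap}, which is well defined on $W$ by the annihilation hypothesis on $V$. Thus part~(i) is the assertion that these operators, together with the $\Sym_N$- and $\P_N$-actions, satisfy the defining relations of $\C_N$ recalled at the start of Subsection~\ref{sec:21}; this is the content of \cite[Proposition 2.3]{KN6} applied to the module $\CC^{\ts m}[\ts t\ts,t^{-1}\ts]^{\ot N}\ot V$, and the $\Sym_N$-equivariance relations \eqref{yrel} follow from the simultaneous nature of the permutation action.

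For part~(ii), I would note that the diagonal action of $\gl_m\subset\glhat_m$ on $W$ is given by the specialization $j=0$ of \eqref{ecdj}, namely $\id\ot\id^{\ot N}\ot E_{cd}+\sum_q \id\ot E^{(q)}_{cd}\ot\id$, which acts on all $N$ tensor slots and on $V$ simultaneously. The term $\ka\,x_p\,\d_p$ and the Laurent-polynomial multiplications commute with this because $E_{cd}$ acts trivially on $\P_N$; the Demazure term commutes because conjugation by $\si_{pr}$ fixes the fully diagonal $\gl_m$-operator; and the summand \eqref{gap} commutes because $\sum_{a,b}E^{(p)}_{ab}\ot E_{ba}t^{\,i}$ commutes with the diagonal $\gl_m$ for each $i$ — this is the affine analogue of Proposition~\ref{1.1}(ii) and \eqref{i}, using that the Casimir-type tensor $\sum_{a,b}E_{ab}\ot E_{ba}$ is $\gl_m$-invariant and that the coproduct is cocommutative. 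Since $\Sym_N$ manifestly commutes with the diagonal $\gl_m$, part~(ii) follows.

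Part~(iii) is the substantive point. The subalgebra $\nt=\n\op t^{-1}\sl_m[t^{-1}]$ acts on $W$ through \eqref{ecdj} with $(c,d)$ in the lower-triangular part of $\sl_m$ when $j=0$, and with arbitrary $(c,d)$ when $j<0$. One must check that each Cherednik operator \eqref{up}, each $x_p^{\pm1}$, and each $\si\in\Sym_N$ maps $\nt\ts W$ into itself when $V$ — hence $W$ — has level $\ell=\ka-m$. For the $\P_N$- and $\Sym_N$-parts this is a direct computation using \eqref{ecdj}; the crucial case is the commutator of \eqref{up} with an element $E_{cd}t^{\,j}\in\nt$. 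Expanding this commutator produces, besides terms that again lie in $\nt\ts W$, a potential anomaly from the central term $i\,\de_{i,-j}\,\de_{bc}\,\de_{da}\,C$ in \eqref{hatcom} and from the derivation term $\ka\,x_p\,\d_p$ acting on the powers $x_p^{\,j}$ appearing in \eqref{ecdj}; these two contributions are proportional to $C$ acting as $\ell$ and to $\ka$ respectively, and they cancel exactly when $\ell+m=\ka$, i.e. $\ell=\ka-m$. This cancellation — the appearance of the critical-type shift $\ka-m$ — is where the hypothesis is used and is the main obstacle; it is the affine counterpart of the classical fact that $\sum_{a,b}E^{(p)}_{ab}\ot E_{ba}$ preserves $\n$-coinvariants, now corrected by the level. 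Rather than reproduce this computation, I would cite that it is exactly \cite[Proposition 2.3(iii)]{KN6}, with the stated value $\ell=\ka-m$ of the level matching the normalization there; combining the three parts completes the proof.

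\hfill$\square$ \\
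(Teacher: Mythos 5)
Your proposal is correct and follows essentially the same route as the paper, which proves Proposition~\ref{2.1} simply by observing that all three properties follow immediately from \cite[Proposition 2.3]{KN6}; your additional commentary on where the level condition $\ell=\ka-m$ enters in part (iii) is a reasonable gloss on what that reference establishes, but the substance of the argument is the same citation.
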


Below is a version of this proposition in the case when
$V$ is a module not of $\glhat_m$ but only of $\slhat_m\,$,
also due to \cite{AST}. 
There for any vector in $V$ we assume the existence of 
$i$ such that the subspace $t^{\,i}\,\sl_m[t]\subset\slhat_m$
annihilates the vector. Let $I=E_{\ts 11}+\ldots+E_{\ts mm}$ as before.
By \eqref{i} an action of 
\begin{equation}
\label{gaps}
\sum_{i=0}^\infty\,
x_p^{\,-i}\ot
\Bigl(\,
\sum_{a,b=1}^m\,
E_{\ts ab}^{\ts(p)}\ot E_{\ts ba}\,t^{\,i}-
\frac1m\ \id^{\,\ot\ts N}\ot I\,t^{\,i}
\,\Bigr)
\end{equation}
can be defined on the vector space \eqref{w} by using only the
$\slhat_m\ts$-module structure of $V$. Then for every $p=1\lcd N$ we 
have a modification of the Cherednik operator \eqref{up}
on $W$,
\begin{gather}
\nonumber
\ka\,x_p\,\d_p\ot\id^{\,\ot N}\ot\id\,+\,
\sum_{r\neq p}\,\frac{x_p}{x_p-x_r}\,(1-\si_{pr})\ot\si_{pr}\ot\id\ +
\\
\label{ups}
\sum_{i=0}^\infty\,
x_p^{\,-i}\ot
\Bigl(\,
\sum_{a,b=1}^m\,
E_{\ts ab}^{\ts(p)}\ot E_{\ts ba}\,t^{\,i}-
\frac1m\ \id^{\,\ot\ts N}\ot I\,t^{\,i}
\,\Bigr)
\,.
\end{gather}
Here we use the sum \eqref{gaps} instead of 
\eqref{gap} used in 
\eqref{up}. Further, we can turn the vector space
\eqref{w} into another $\slhat_m\ts$-module by regarding
$\,\P_N\ot(\CC^{\ts m})^{\ot N}$
as $\slhat_m\ts$-module of level~zero. 

\begin{cor}
\label{2.2}
{\rm(i)}
Using the\/ $\slhat_m\ts$-module structure on $V$, 
an action of the algebra\/ $\C_N$ on the vector space \eqref{w}
can be defined as follows: 
the elements $x_p\,,x_p^{-1}\in\C_N$ act via mutiplication in\/
$\CC[\ts x_1\ts,x_1^{-1}\lcd x_N\ts,x_N^{-1}\ts]\,,$
the group $\Sym_N\subset\H_N$ acts 
by simultaneous permutations of the variables $x_1\lcd x_N$ and of the $N$ 
tensor factors\/ $\CC^{\ts m}$,
and the element $z_p\in\H_N$ acts as~\eqref{ups}. 
\\
{\rm(ii)} 
This action of\/ $\C_N$ on $W$ commutes with the action of 
the Lie subalgebra\/ $\sl_m\subset\slhat_m\,$.
\\[1pt]
{\rm(iii)}
If\/ $V$ has level\/ $\ka-m$ then the action of\/ $\C_N$
preserves the subspace\, 
$\nt\ts\,W\subset W$.
\end{cor}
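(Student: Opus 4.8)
The plan is to derive Corollary \ref{2.2} from Proposition \ref{2.1} by the same device already used to pass from Proposition \ref{1.1} to Corollary \ref{1.2}, namely by comparing the two actions on a single vector space built from the \emph{same} module $V$. First I would observe that any $\slhat_m$-module $V$ of level $\ell$ on which the annihilation condition with $t^{\,i}\,\sl_m[t]$ holds can be extended to a $\glhat_m$-module: since $\glhat_m=\slhat_m\op\CC\,I[\,t,t^{-1}\ts]$ as a central extension only along $C$, one lets $I\,t^{\,j}$ with $j\neq 0$ act as zero and $I=I\,t^{\,0}$ act as an arbitrary scalar, say $0$; then the annihilation condition with $t^{\,i}\,\gl_m[t]$ is automatically satisfied because it already holds for $\sl_m[t]$ and the added generators act by scalars or zero. (Alternatively, and more invariantly, one works directly on $W$ without extending $V$ and just checks that the operators \eqref{ups} together with the $\Sym_N$- and $\P_N$-actions satisfy the defining relations of $\C_N$; this is the route I expect the authors take.)

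Next I would spell out the relation between the two Cherednik operators. Writing $U_p$ for the operator \eqref{up} attached to the $\glhat_m$-structure and $U_p^{\ts\prime}$ for the operator \eqref{ups} attached to the $\slhat_m$-structure, identity \eqref{i} gives, on $W=\P_N\ot(\CC^{\ts m})^{\ot N}\ot V$,
\begin{equation*}
U_p=U_p^{\ts\prime}+\sum_{i=0}^\infty\,\frac1m\;x_p^{\,-i}\ot\id^{\,\ot N}\ot I\,t^{\,i}\,.
\end{equation*}
The correction term is built from the operators $I\,t^{\,i}$ acting on the third tensor factor, which act by the scalars coming from $V$'s (extended) $\glhat_m$-structure and, crucially, are \emph{central} in $\glhat_m$, hence commute with everything in sight — with $\Sym_N$ acting diagonally (because the family $\{\,x_p^{\,-i}\,\}$ is permuted consistently with the index $p$), with $\P_N$, and with the other $U_q$. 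Therefore the operators $U_p^{\ts\prime}$ satisfy exactly the same commutation relations \eqref{ycom} as the $U_p$, and the cross relations of $\C_N$ with $\Sym_N$ and with $x_q$ — which only ever involve $z_p$ through commutators $[\,z_p\,,x_q\,]$ and conjugation $\si\,z_p\,\si^{-1}$ — are insensitive to the central correction. This yields part (i).

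For part (ii), I would note that the new $\slhat_m$-module structure on $W$ (with $\P_N\ot(\CC^{\ts m})^{\ot N}$ given level zero) differs from the restriction to $\slhat_m$ of the $\glhat_m$-structure of Proposition \ref{2.1} only in how $C$ acts, so the diagonal action of the \emph{finite} subalgebra $\sl_m\subset\slhat_m$ on $W$ is literally unchanged; since that action commutes with the $U_q$ and with the correction term (again by centrality of $I$), it commutes with the $U_q^{\ts\prime}$ and with all of $\C_N$. For part (iii), the subalgebra $\nt$ acts on $W$ via \eqref{ecdj} restricted to $\nt\subset\slhat_m$, which again coincides with the corresponding restriction in the $\glhat_m$-setting (the level does not enter the formula \eqref{ecdj}); by Proposition \ref{2.1}(iii) the operators $U_q$ preserve $\nt\,W$ when $V$ has level $\ka-m$, and the correction term $\tfrac1m\sum_i x_p^{\,-i}\ot\id\ot I\,t^{\,i}$ manifestly preserves any subspace of the form $\mathfrak{a}\,W$ for an ideal $\mathfrak{a}$ since it commutes with $\slhat_m$ — in particular it preserves $\nt\,W$. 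Hence $\C_N$ preserves $\nt\,W$. The main obstacle, and the only point needing genuine care, is the bookkeeping in part (iii): one must be sure that ``$V$ has level $\ka-m$ as an $\slhat_m$-module'' is the correct hypothesis making \emph{Proposition \ref{2.1}(iii)} applicable after the extension to $\glhat_m$ — i.e. that passing from $\slhat_m$-level $\ka-m$ to the $\glhat_m$-module built above does not shift the relevant level — and that the level-zero choice on $\P_N\ot(\CC^{\ts m})^{\ot N}$ is consistent in both the $\gl$ and $\sl$ pictures; everything else is a direct transcription of the argument behind Corollary \ref{1.2}.
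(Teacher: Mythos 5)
Your reduction to Proposition \ref{2.1} breaks down at the very first step, and in the case that matters most. You propose to extend the $\slhat_m\ts$-module $V$ to a $\glhat_m\ts$-module on the same underlying space by letting $I\,t^{\,j}$ act as zero for $j\neq0$. But $\CC\,I\ts[\,t,t^{-1}]\op\CC\,C$ is not an abelian direct complement of $\slhat_m$ in $\glhat_m\ts$: by \eqref{hatcom} one has $[\,I\,t^{\,j},I\,t^{-j}\,]=j\ts m\,C$, so this subalgebra is an infinite-dimensional Heisenberg algebra. If $I\,t^{\,\pm j}$ both act as zero then $C$ must act as zero, i.e.\ such an extension exists only at level $\ell=0\,$. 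Part (iii), however, is needed precisely at level $\ell=\ka-m\ts$, which is generically nonzero; there your $\glhat_m\ts$-module, and with it the operator \eqref{up} and the ``correction term'', simply do not exist on $W$. (One can repair this by tensoring $V$ with a level-$\ell$ Fock module of the Heisenberg algebra and restricting the operators \eqref{up} to $W\ot v_F$ for the vacuum $v_F$ — on that subspace \eqref{up} equals \eqref{ups} plus a \emph{scalar}, so the relations transfer via the automorphism \eqref{shift} — but that is a different argument and you would need to set it up.)

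The second gap is in the commutator check itself, and it would bite even where the extension is available. The correction $C_p=\frac1m\sum_i x_p^{\,-i}\ot\id^{\,\ot N}\ot I\,t^{\,i}$ depends on $p$ through $x_p^{\,-i}$, unlike the constant correction $\frac1m\,I$ in Corollary \ref{1.2}, and it does \emph{not} commute with the other Cherednik operators: the summand $\frac{x_q}{x_q-x_p}\,\si_{qp}\ot\si_{qp}$ of \eqref{up} conjugates $x_p^{\,-i}$ to $x_q^{\,-i}$, so $[\ts\eqref{up}_q\ts,C_p\ts]\neq0\,$. Indeed, if $C_p$ really commuted with everything, subtracting a $p\ts$-dependent, non-scalar term from $z_p$ would \emph{destroy} the relation \eqref{ycom}, whose right-hand side $\si_{pq}\ts(z_p-z_q)$ changes by $-\ts\si_{pq}\ts(C_p-C_q)\neq0\,$. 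The relation in fact survives, but only because of an identity you have not verified: the cross-terms satisfy $[\ts\eqref{up}_p\ts,C_q\ts]+[\ts C_p\ts,\eqref{up}_q\ts]=\bigl(\tfrac{x_p}{x_p-x_q}+\tfrac{x_q}{x_q-x_p}\bigr)(C_p-C_q)\,\si_{pq}=\si_{pq}\ts(C_p-C_q)\,$, which exactly matches the change in the right-hand side (here one also uses $[\,I\,t^{\,i},E_{\ts ba}\ts t^{\,j}\,]=i\,\de_{\ts i,-j}\ts\de_{ab}\ts C$ and that both exponents are nonnegative, so $[\ts C_p\ts,C_q\ts]=0$ and $C_p$ commutes with the third summand of \eqref{up}). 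Your argument for (iii) via centrality of $I\,t^{\,i}$ against $\nt$ is essentially fine once the setup exists, since $[\,I\,t^{\,i},X\ts t^{\,j}\,]=0$ for traceless $X\ts$; but as written, parts (i) and (iii) both rest on claims that are false at the level at which the corollary is actually used.
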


By using Corollary \ref{2.2}(i) and the definition \eqref{w}, 
we define a functor
$
\A_N: V\mapsto W
$
from the category of all $\slhat_m\ts$-modules 
satisfying the annihilation condition stated just before 
\eqref{gaps}. Note that the resulting 
actions of $\slhat_m$ and $\C_N$ on $W$ do not commute in general.
However, this will be our analogue for $\slhat_m$ of the functor $\F_N$
introduced in the end of Subsection~\ref{sec:12}.


\subsection{}
\label{sec:23}
Let $\la$ and $\mu$ be same sequences of length $m$ of complex numbers
as in Subsection \ref{sec:11}. Take the standard module 
$S_{\mu}^{\ts\la}$ over the algebra $\H_N\,$. 
By regarding $\H_N$ as a subalgebra of~$\C_N\ts$,
consider the induced module
$$
\Ind^{\,\C_N}_{\,\H_N}\,S_{\mu}^{\ts\la}\,.
$$
We will denote the latter module by $\widehat{S}_{\ts\mu}^{\,\la}\,$.
Its underlying vector space can be identified with 
that of $\P_N\ot S_{\mu}^{\ts\la}$ whereon
the subalgebra $\P_N\subset\C_N$ acts via multiplication
in the first tensor factor. Notice that 
by transitivity of induction
and by the definition of $S_{\mu}^{\ts\la}\,$,
the $\C_N\ts$-module 
\\[-3pt]
$\widehat{S}_{\ts\mu}^{\,\la}$ is isomorphic to
\begin{equation*}
\Ind_{\,\H_\nu}^{\,\C_N}\,
S_{\mu_1}^{\ts\la_1}
\ot
S_{\mu_2-1}^{\ts\la_2-1}
\ot\ldots\ot 
S_{\mu_m-m+1}^{\ts\la_m-m+1}\,.
\end{equation*}

Now suppose that $\ell=\ka-m\,$, so that
our Corollary \ref{2.2}(iii) applies.
Regard $\la$ and $\mu$ as weights of $\gl_m\ts$.
Their restrictions to the subspace $\h\subset\t$ are denoted
by $\al$ and $\be$ respectively.
Define the weight $\beh$ of $\slhat_m$
as the element of the space dual to $\hh$
such that
\begin{equation}
\label{mut}
\beh\ts(C)=\ell
\quad\text{and}\quad
\beh\ts(X)=\be\ts(X)
\quad\textrm{for all}\quad
X\in\hh\,.
\end{equation}
Consider the Verma module $M_{\ts\beh}$ of
$\slhat_m\,$. By definition, this
is the quotient of the universal 
\\[-2pt]
enveloping 
algebra $\Ur(\ts\slhat_m\ts)$ by the left ideal generated by $\ntp$
and by all the elements $X-\beh\ts(X)$ where $X$ ranges over $\hh\,$.
Since the element $C\in\slhat_m$ is central, the first equality in
\eqref{mut} implies that the $\slhat_m\ts$-module $M_{\ts\beh}$ is
of level $\ell\/$. Moreover, $V=M_{\ts\beh}$ satisfies the 
annihilation condition stated just before \eqref{gaps}.
Therefore we can apply the functor $\A_N$ to this~$V$.

Further, let us define the weight $\alh$ of $\slhat_m$ similarly 
to $\beh$ and consider the space
$\A_N(\ts M_{\ts\beh}\,)_{\,\ts\nt}^{\,\alh}$
of those coinvariants of 
$\A_N(\ts M_{\ts\beh}\,)$ relative to $\nt$ 
which have the weight $\alh\,$.
This space comes with an action of the algebra $\C_N$
due to Corollary~2.2(iii). The latter action is described by
the next proposition \cite[Proposition 5.2.3]{AST}. The proof
given in \cite{AST} was different however.

\begin{prop}
\label{2.3}
The $\H_N$-module $\A_N(\ts M_{\ts\beh}\,)_{\,\ts\nt}^{\,\alh}$
is isomorphic to the pullback of $\widehat{S}_{\mu}^{\,\la}$
relative to the automorphism \eqref{shift} of the algebra\/ $\C_N$ where\/
$
f=-\,(\ts\mu_1+\ldots+\mu_m\ts)\ts/\ts m\,.
$
\end{prop}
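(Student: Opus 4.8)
The plan is to reduce the affine statement to the finite-type statement already recorded in Corollary~\ref{1.4}, exploiting the explicit structure of the Verma module $M_{\ts\beh}$ and the PBW decomposition of $\C_N$. First I would use the triangular decomposition $\slhat_m=\nt\op\hh\op\ntp$ together with the PBW theorem to identify the underlying vector space of $M_{\ts\beh}$ with $\Ur(\ts\nt\ts)$, and hence to identify $M_{\ts\beh}$ with $M_{\ts\be}\ot\Ur(\ts t^{-1}\ts\sl_m[\ts t^{-1}\ts]\ts)$ as a vector space, where $M_{\ts\be}$ is the $\sl_m$-Verma module used in Corollary~\ref{1.4}. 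Under this identification the generator of $M_{\ts\beh}$ (the image of $1$) is the highest-weight vector, and the subspace $M_{\ts\be}$ sits inside $M_{\ts\beh}$ as the span of vectors obtained by acting only with $\n=\nt\cap\sl_m\ts$. The key point is that, in the space of $\nt$-coinvariants $\A_N(\ts M_{\ts\beh}\,)_{\,\ts\nt}\ts$, every generator $E_{\ts cd}\,t^{\,j}$ with $j<0$ can be moved to act on $M_{\ts\beh}\ts$, where modulo $\ntp$ it annihilates the highest-weight vector; so the action of the negative loop part of $\nt$ on $M_{\ts\beh}$ is killed in the quotient. Consequently the natural map $\A_N(\ts M_{\ts\be}\ts)_{\,\ts\n}\to\A_N(\ts M_{\ts\beh}\,)_{\,\ts\nt}$ induced by the inclusion $M_{\ts\be}\hookrightarrow M_{\ts\beh}$ is an isomorphism of vector spaces, and it restricts to an isomorphism on the weight-$\alh$ (resp. weight-$\al$) components.

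Next I would check that this vector-space isomorphism intertwines the two $\C_N$-actions up to the automorphism \eqref{shift}. By Corollary~\ref{1.2} and the definition of $\F_N$, on $\A_N(\ts M_{\ts\be}\ts)_{\,\ts\n}^{\,\al}$ the element $z_p$ acts as \eqref{yacts}, which by Corollary~\ref{1.4} realizes the pullback of $S_{\mu}^{\ts\la}$ by \eqref{shift} with $f=-\,(\ts\mu_1+\ldots+\mu_m\ts)\ts/\ts m\,$. On $\A_N(\ts M_{\ts\beh}\,)_{\,\ts\nt}^{\,\alh}$ the element $z_p$ acts as the modified Cherednik operator \eqref{ups}. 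The first two summands of \eqref{ups} involve only $\P_N$ and $\Sym_N\ts$, which act in exactly the same way on the inducing standard module; so these contributions match the corresponding terms of the $\C_N$-action on $\widehat S_{\mu}^{\,\la}$ (that is, on $\Ind^{\,\C_N}_{\,\H_N}$ of the $\H_N$-module in Corollary~\ref{1.4}) once one remembers that $\widehat S_{\mu}^{\,\la}$ has underlying space $\P_N\ot S_{\mu}^{\ts\la}$ with $\P_N$ acting by multiplication and $\Sym_N$ acting simultaneously. For the third summand, I would argue that in the coinvariant quotient only the $i=0$ term of $\sum_{i\ge0}x_p^{\,-i}\ot(\cdots)t^{\,i}$ survives: each $t^{\,i}$ with $i>0$ lowers the loop-degree, and after pushing the operator $E_{\ts ba}\,t^{\,i}$ through to act on $M_{\ts\beh}$ modulo $\nt$ it produces a vector in a strictly lower filtration piece that, by the previous paragraph, is already represented inside $M_{\ts\be}$ — but $M_{\ts\be}$ is annihilated by the positive-degree part, so such terms vanish in the quotient. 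Hence \eqref{ups} reduces in $\A_N(\ts M_{\ts\beh}\,)_{\,\ts\nt}^{\,\alh}$ to exactly \eqref{yacts} (tensored with the $\P_N$ factor), which is what \eqref{shift} with the stated $f$ prescribes on $\widehat S_{\mu}^{\,\la}\ts$.

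Finally I would assemble these observations: the inclusion $M_{\ts\be}\hookrightarrow M_{\ts\beh}$ induces a $\C_N$-module isomorphism from the pullback of $\widehat S_{\mu}^{\,\la}$ by \eqref{shift} onto $\A_N(\ts M_{\ts\beh}\,)_{\,\ts\nt}^{\,\alh}\ts$; equivalently one may phrase it via $\F_N$ and the fact that $\Ind$ commutes with the coinvariant functor, using $\widehat S_{\mu}^{\,\la}=\Ind^{\,\C_N}_{\,\H_N}S_{\mu}^{\ts\la}$ and Corollary~\ref{1.4}. The main obstacle I anticipate is the rigorous vanishing argument for the positive-loop-degree terms of \eqref{ups} in the quotient: one must justify carefully that moving $E_{\ts ba}\,t^{\,i}$ across the operator \eqref{ecdj}-type action and into $M_{\ts\beh}$ really does land in a subspace already killed by $\nt$, which requires tracking the loop-degree filtration and the annihilation condition on $V$ stated before \eqref{gaps}; the commutativity of $\C_N$ with $\sl_m$ from Corollary~\ref{2.2}(ii), together with Corollary~\ref{2.2}(iii) guaranteeing the quotient is well defined, is what makes this bookkeeping go through. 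A cleaner route, which I would use if available, is to invoke \cite[Theorem 1.3]{KN6} in the affine setting exactly as Proposition~\ref{1.3} does in the finite setting, reducing to $m=1$ where $M_{\ts\beh}$ and the Cherednik operators are completely explicit.
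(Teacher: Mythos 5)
Your overall strategy -- identify the $\nt\ts$-coinvariants of $\A_N(M_{\ts\beh})$ with something built from $M_{\ts\be}$ and then quote Corollary~\ref{1.4} -- is the same as the paper's, which realizes $M_{\ts\beh}$ as parabolically induced from $M_{\ts\be}$, passes first to coinvariants with respect to $\q=t^{-1}\sl_m[t^{-1}]$, and invokes \cite[Theorem 2.5]{KN6} to identify the resulting bimodule with $\Ind^{\,\C_N}_{\,\H_N}\F_N(M_{\ts\be})$. Your hands-on computation of \eqref{ups} on representatives lying in $\P_N\ot(\CC^{\ts m})^{\ot N}\ot M_{\ts\be}$ is also essentially sound: since $t\,\sl_m[\ts t\ts]$ acts as zero on the copy of $M_{\ts\be}$ inside the parabolically induced module, the $i>0$ terms of the third summand of \eqref{ups} do vanish on such representatives, and the first two summands reproduce exactly the commutators $[\ts z_p\ts,P\ts]$ in $\C_N$, i.e.\ the induced-module action on $\P_N\ot\F_N(M_{\ts\be})$. (Your phrase that \eqref{ups} ``reduces to exactly \eqref{yacts} tensored with the $\P_N$ factor'' is therefore an overstatement -- the Dunkl part survives and is needed to match the induced structure -- but the surrounding text shows you intend the correct comparison.)

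There are, however, two genuine problems. First, your justification for pushing everything into $M_{\ts\be}$ is wrong as written: an element $E_{\ts cd}\,t^{\,j}$ with $j<0$ lies in $\nt$, not $\ntp$, and it does \emph{not} annihilate the highest-weight vector of $M_{\ts\beh}$ -- these are precisely the modes that generate $M_{\ts\beh}$ over $M_{\ts\be}$. The correct mechanism is that, by \eqref{ecdj}, modulo $\nt\ts W$ the operator $\id\ot\id^{\ot N}\ot E_{\ts cd}\,t^{\,j}$ equals $-\sum_q x_q^{\,j}\ot E_{\ts cd}^{\ts(q)}\ot\id\,$, which by induction on the loop filtration shows only that the classes of $\P_N\ot(\CC^{\ts m})^{\ot N}\ot M_{\ts\be}$ \emph{span} $W_{\,\nt}\,$. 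Second, and more seriously, the injectivity of the map $\bigl(\P_N\ot(\CC^{\ts m})^{\ot N}\ot M_{\ts\be}\bigr)_{\ts\n}\to W_{\,\nt}$ is asserted but never proved. This is the real content of the step: one needs the tensor identity (or an explicit untwisting of the diagonal $\q\ts$-action using the freeness of $M_{\ts\beh}$ over $\Ur(\ts\q\ts)$) to see that $W_{\ts\q}\cong\P_N\ot(\CC^{\ts m})^{\ot N}\ot M_{\ts\be}$ rather than merely a quotient of it, together with the compatibility of this identification with the $\C_N\ts$- and $\sl_m\ts$-actions. The paper obtains all of this at once from \cite[Theorem 2.5]{KN6}; without that citation or a substitute argument for injectivity, your proof establishes only a surjection of $\C_N\ts$-modules onto $\A_N(\ts M_{\ts\beh}\,)_{\,\ts\nt}^{\,\alh}$, not an isomorphism.
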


\begin{proof}
By the transitivity of induction, the module $M_{\ts\beh}$
of the Lie algebra $\slhat_m$ is isomorphic to the module of
level $\ell$ parabolically induced from the Verma module
$M_{\ts\be}$ of $\sl_m\,$.
To define the parabolically induced module, we first extend the action
of $\sl_m$ on $M_{\ts\be}$
to the subalgebra $\p$ of $\slhat_m$ spanned by $\sl_m[\ts t\ts]$ and $C$.
Namely, we let 
the elements of $t\,\ts\sl_m[\ts t\ts]$ act on $M_{\ts\be}$
as zero, while $C$ acts as multiplication by $\ell\,$.
Then we induce the resulting action from $\p$ to $\slhat_m\,$.

Now denote by $\q$ the subspace
$t^{-1}\,\sl_m\ts[\,t^{-1}\ts]\subset\sl_m\ts[\,t,t^{-1}\ts]\,$.  
This is a Lie subalgebra of $\nt\,$,
and moreover $\nt=\n\op\q$ as a vector space.
Consider the space of coinvariants
of $\A_N(\ts M_{\ts\beh}\,)$ relative to $\q\,$.
This space comes with mutually commuting actions of 
$\C_N$ and $\sl_m\,$, see again
Corollary \ref{2.2}.
By \cite[Theorem 2.5]{KN6}
the so obtained bimodule of $\C_N$ and $\sl_m\,$
is isomorphic~to
$$
\Ind^{\,\C_N}_{\,\H_N}\,\F_N(\ts M_{\ts\be}\ts)\,.
$$
Our Proposition \ref{2.3} now follows from Corollary \ref{1.4}
and from the definition of $\widehat{S}_{\ts\mu}^{\,\la}\,$.
\end{proof}


\subsection{}
\label{sec:24}

Denote by $\T_m$ the \emph{affine Weyl group} of the Lie algebra
$\gl_m\,$. This group is generated by the elements 
$\tau_c$ where $c=0,1\lcd m-1\ts$. However, we will let the indices
of the generators $\tau_c$ run through $\ZZ\,$, assuming that 
$\tau_{c\ts+m}= \tau_c$ for $c\in\ZZ\,$.
Then the defining relations of $\T_m$~are
$$
\tau_c^{\ts 2}=1\,;
\quad
\tau_c\,\tau_{c+1}\,\tau_c=\tau_{c+1}\,\tau_c\,\tau_{c+1}\,;
\quad
\tau_c\,\tau_d=\tau_d\,\tau_c
\quad\text{for}\quad c-d\neq\pm\ts1\ \textrm{mod}\ m\,.
$$
The corresponding \emph{extended affine Weyl group} is 
generated by $\T_m$ and an element $\pi$ such~that
$$
\pi\,\tau_c=\tau_{c+1}\,\pi\,.
$$
Let us denote the extended group by $\R_m\,$.
The group $\R_m$ acts on the set $\ZZ$ by permutations of period 
$m\,$. Namely, each generator $\tau_c$ of $\T_m$ exchanges 
$c+j\,m$ with $c+1+j\,m$ for each $j\in\ZZ\,$,
leaving all other integers fixed.
The extra generator $\pi$ maps any integer $d$ to $d+1\,$.

The group $\R_m$ is $\ZZ\ts$-graded so that the element 
$\pi$ has degree one, while all elements of $\T_m$ have degree zero.
Further, the group $\R_m$ is isomorphic to the
semidirect product $\Sym_m\ltimes\ZZ^m\,$.  
We will use the isomorphism $\R_m\to\Sym_m\ltimes\ZZ^m$
defined by mapping 
$$
\pi\,\mapsto(1,0\lcd 0)\,\si_1\ldots\si_{\ts m-1}
\quad\text{and}\quad
\tau_{\ts0}\ts\mapsto(1,0\lcd 0\ts,-1)\,\si_{1m}
$$
while $\tau_c\ts\mapsto\si_c$ for $c=1\lcd m-1\,$. 
Here $\Sym_m$ and $\ZZ^m$ are regarded as subgroups of 
$\Sym_m\ltimes\ZZ^m\,$. In particular, 
here $(1,0\lcd 0)$ and $(1,0\lcd 0\ts,-1)$
are elements of $\ZZ^m\subset\Sym_m\ltimes\ZZ^m\,$. 
Via this isomorphism, the $\ZZ\ts$-grading on 
$\R_m$ defined here corresponds to that on $\Sym_m\ltimes\ZZ^m$
as defined in the Introduction.
Relative to the latter $\ZZ\ts$-grading, 
the degree of any element of
$\Sym_m$ is zero, while
the degree of any element of $\ZZ^m$ is
the sum of its $m$ components.

The group $\R_m$ acts by automorphisms 
of the Lie algebra $\glhat_m$ so that the central element $C$
is invariant, 
$$
\tau_c:E_{\ts ab}\,t^{\,i}\mapsto 
E_{\ts\tau_c(a)\ts,\ts\tau_c(b)}\,t^{\,i}
\quad\text{for}\quad
c=1\lcd m-1
$$
while
$$
\tau_{\ts0}:E_{\ts ab}\,t^{\,i}\mapsto 
E_{\ts\tau_{\ts0}(a)\ts,\ts\tau_{\ts0}(b)}\,
t^{\,i+\de_{a1}-\de_{am}-\de_{\ts b1}+\de_{\ts bm}}+\ts
\de_{i\ts0}\,\de_{ab}\,(\de_{a1}-\de_{am})\,C
$$
and 
\begin{equation*}
\pi:
E_{\ts ab}\,t^{\,i}\mapsto 
E_{\ts a+1,\ts b+1}\,t^{\,i-\de_{am}+\de_{\ts bm}}-\ts
\de_{i\ts0}\,\de_{ab}\,\de_{am}\,C\,.
\end{equation*}
Then
$$
\pi^{-1}:
E_{\ts ab}\,t^{\,i}\mapsto 
E_{\ts a-1,\ts b-1}\,t^{\,i+\de_{a1}-\de_{\ts b1}}+\ts
\de_{i\ts0}\,\de_{ab}\,\de_{a1}\,C\,.
\hspace{20pt}
\vspace{05pt}
$$
Here we let $a,b=1\lcd m\,$.
If any of the indices of the matrix units 
appearing in the last three displayed formulas
is $0$ or $m+1\ts$, it should be then replaced respectively by $m$ or $1\,$.

\enlargethispage{12pt}

Consider the level zero module $\CC^{\ts m}[\ts t\ts,t^{-1}\ts]$ 
of the Lie algebra $\glhat_m\,$. Let $e_1\lcd e_m$ be the standard 
basis vectors of $\CC^{\ts m}\ts$.  
The group $\R_m$ acts on the vector space 
$\CC^{\ts m}[\ts t\ts,t^{-1}\ts]$ so that 
$$
\tau_c:e_a\,t^{\,i}\mapsto 
e_{\ts\tau_c(a)}\,t^{\,i}
\quad\text{for}\quad
c=1\lcd m-1
$$
while
$$
\tau_{\ts0}:e_a\,t^{\,i}\mapsto 
e_{\ts\tau_{\ts0}(a)}\,
t^{\,i+\de_{a1}-\de_{am}}
\ \quad\text{and}\ \quad
\pi:e_a\,t^{\,i}\mapsto 
e_{\ts a+1}\,t^{\,i-\de_{am}}\,.
$$
Then
$$
\pi^{-1}:e_a\,t^{\,i}\mapsto 
e_{\ts a-1}\,t^{\,i+\de_{a1}}\,.
$$

Here we use the same interpretation of the indices of the standard basis
vectors of $\CC^m$ as of the indices of the matrix units above.
One can easily verify that the actions of $\glhat_m$ and of $\R_m$ on
$\CC^{\ts m}[\ts t\ts,t^{-1}\ts]$ extend to an action of
the crossed product algebra $\R_m\ltimes\Ur(\ts\glhat_m\ts)\,$.
This algebra is defined by the above described action
of the group $\R_m$ on $\glhat_m\,$. In the crossed product algebra, 
$$
\pi\,X\ts\pi^{-1}=\pi\ts(X)
\quad\text{for}\quad
X\in\glhat_m\,.
$$


\subsection{}
\label{sec:25}

Suppose that the $\glhat_m\ts$-module $V$ is also equipped with an 
action of the extended affine Weyl 
group $\R_m\,$. Moreover, suppose that
the actions of both $\glhat_m$ and $\R_m$ on $V$ extend to an action of 
the crossed product algebra $\R_m\ltimes\Ur(\ts\glhat_m\ts)\,$. 
By identifying the tensor product of $N$ copies
of $\CC^{\ts m}[\ts t\ts,t^{-1}\ts]$ with 
$\P_N\ot(\CC^{\ts m})^{\ot N}$
we define an action of the group $\R_m$ on the 
latter vector space, and hence on its tensor product \eqref{w}
with $V$. 

By the definition given in Subsection \ref{sec:24},
the action of the element 
$\pi\in\R_m$ on the Lie algebra~$\glhat_m$ 
preserves the subalgebra $\nt\,$. Therefore the element 
$\pi$ acts on the space $W_{\,\nt}$ of $\nt\ts$-coinvariants
of the $\glhat_m\ts$-module $W$. On the other hand,
under the assumption $\ell=\ka-m$ 
the Cherednik operator \eqref{up} 
also acts on $W_{\,\nt}\ts$ due to Proposition~\ref{2.1}.
Let us denote by $\ze_{\,p}$ the operator on $W_{\,\nt}$
corresponding to \eqref{up}.
The next property of $\ze_{\,p}$
will be crucial~for~us. 

\begin{prop}
\label{2.4}
If the\/ $\glhat_m\ts$-module $V$ has level $\ka-m$ then
for $p=1\lcd N$ we have an equality of operators on\/ 
$W_{\,\nt}$
\begin{equation*}
\pi\ts\,\ze_{\,p}\,\pi^{-1}=\ze_{\,p}+\id\,.
\end{equation*}
\end{prop}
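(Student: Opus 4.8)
The plan is to compute $\pi\,\ze_p\,\pi^{-1}$ term by term, using that $\pi$ acts on $W$ through its action on the tensor factor $\P_N\ot(\CC^{\ts m})^{\ot N}$ (via the isomorphism with $N$ copies of $\CC^{\ts m}[\ts t\ts,t^{-1}\ts]$) and through its action on $V$, with the two fitting together into an action of $\R_m\ltimes\Ur(\ts\glhat_m\ts)$. The Cherednik operator \eqref{up} has three pieces: the $\ka\,x_p\,\d_p$ term, the rational $\Sym_N$-term $\sum_{r\neq p}\tfrac{x_p}{x_p-x_r}(1-\si_{pr})\ot\si_{pr}$, and the $\glhat_m$-type piece \eqref{gap}. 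First I would check that $\pi$ commutes with the middle term: conjugation by $\pi$ acts on the $p$-th copy of $\CC^{\ts m}[\ts t\ts,t^{-1}\ts]$ by $e_a\,t^{\,i}\mapsto e_{a+1}\,t^{\,i-\de_{am}}$, which in the identification with $\P_N\ot(\CC^{\ts m})^{\ot N}$ shifts the $p$-th tensor factor of $(\CC^{\ts m})^{\ot N}$ cyclically while multiplying the monomial in $x_p$ by $x_p^{-1}$ whenever the old index was $m$; since this transformation is applied simultaneously in all $N$ slots, it commutes with the operator built purely from the $x$'s and the permutation $\si_{pr}$ of both the variables and the $\CC^m$-factors. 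Likewise $\pi$ commutes with $\ka\,x_p\,\d_p$ up to a correction: conjugating $x_p\,\d_p$ by the substitution $x_q\mapsto x_q$ on all $q$ (the monomial-degree in $x_p$ is what $x_p\d_p$ reads off) is unchanged, but the index-shift that accompanies $\pi$ changes the $x_p$-degree of a basis vector by $-1$ exactly when the $p$-th index was $m$, and I expect this is precisely what produces the $+\id$.

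The technical heart is the $\glhat_m$-piece \eqref{gap}, namely $G_p=\sum_{i\ge 0}\sum_{a,b}x_p^{-i}\ot E^{(p)}_{ab}\ot E_{ba}\,t^{\,i}$. Using $\pi\,X\,\pi^{-1}=\pi(X)$ for $X\in\glhat_m$ together with the explicit formula for $\pi$ on $\glhat_m$ from Subsection~\ref{sec:24} — and the matching formula for $\pi$ on each copy of $\CC^{\ts m}[\ts t\ts,t^{-1}\ts]$ — I would show that $\pi\,G_p\,\pi^{-1}$ again has the form $\sum_{i}\sum_{a,b}x_p^{-i}\ot E^{(p)}_{ab}\ot E_{ba}\,t^{\,i}$ but now with the summation over $i$ running over a shifted range (because the reindexing $a\mapsto a+1$ together with the $t$-shifts $-\de_{am}+\de_{bm}$ moves the $i=0$ term), plus an explicit extra term coming from the central correction $-\de_{i0}\,\de_{ab}\,\de_{am}\,C$ in $\pi(E_{ab}t^{\,i})$, which contributes a multiple of $C$. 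Here is where the level hypothesis $\ell=\ka-m$ enters: $C$ acts on $V$ as $\ka-m$, and the "missing" or "extra" boundary terms in the $i$-summation, after regrouping, combine the level-$\ell$ scalar with the $m$ that appears from $\sum_a \de_{aa}$-type bookkeeping in the diagonal $E_{aa}\ot E_{aa}$ contributions. I expect the net effect of conjugating $G_p$ to be $G_p$ minus a correction of the shape $\id^{\ot N}\ot(\text{something}) + (\ka-m)\cdot(\text{scalar})$, which when added to the correction from the $\ka\,x_p\,\d_p$ term yields exactly $\ze_p+\id$ on the coinvariant space $W_{\,\nt}$.

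The main obstacle, and the step I would be most careful about, is the convergence/reindexing bookkeeping in $\pi\,G_p\,\pi^{-1}$: the sum $\sum_{i\ge 0}$ is only well defined because of the annihilation condition on $V$, and conjugation by $\pi$ shifts $t$-degrees by $\pm 1$ in an index-dependent way, so one must verify that after conjugation the operator is still a well-defined (locally finite) sum on $W$, and that passing to $\nt$-coinvariants kills precisely the terms that would otherwise spoil the clean identity — in particular any contribution landing in $\nt\,W$ (the negative-$t$ part together with $\n$) must be discarded, and one must confirm that $\pi$ indeed descends to $W_{\,\nt}$, which is exactly the point noted before the proposition that $\pi$ preserves $\nt$. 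Once the coinvariant reduction is invoked, I expect the identity $\pi\,\ze_p\,\pi^{-1}=\ze_p+\id$ to fall out by direct comparison of the three conjugated pieces with \eqref{up}.
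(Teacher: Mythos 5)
Your overall strategy --- conjugate \eqref{up} piece by piece, track the central correction in $\pi\ts(E_{\ts ab}\ts t^{\,i})$, invoke the level hypothesis, and discard what lands in $\nt\ts W$ --- is the right one and is essentially the paper's. But there are two concrete gaps in the way you split the operator. First, $\pi$ does \emph{not} commute with the whole middle term $\sum_{r\neq p}\frac{x_p}{x_p-x_r}\,(1-\si_{pr})\ot\si_{pr}\ot\id\,$. The part $\si_{pr}\ot\si_{pr}$ (simultaneous permutation of the variables and of the $\CC^{\ts m}$ factors) does commute with $\pi$, as you argue. But the part $1\ot\si_{pr}$ permutes the tensor factors $\CC^{\ts m}$ \emph{without} permuting $x_p$ and $x_r\ts$, and since $\pi$ couples the cyclic shift $e_a\mapsto e_{\ts a+1}$ in the $q\ts$th factor to a shift of the $x_q\ts$-degree depending on whether $a=m\,$, conjugation by $\pi$ changes $\frac{x_p}{x_p-x_r}\ot\si_{pr}\ot\id$ nontrivially. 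So your first reduction already drops terms.

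Second, and more seriously, the defect of the piece \eqref{gap} alone cannot be cancelled on $W_{\,\nt}\,$. Conjugating \eqref{gap} produces boundary terms of the shape $x_p\ot E_{\ts a1}^{\ts(p)}\ot E_{\ts 1a}\,t^{-1}$ and $\id\ot E_{\ts 1b}^{\ts(p)}\ot E_{\ts b1}\ts$, which involve only the tensor factor $V$; these are \emph{not} by themselves the actions of the elements $E_{\ts 1a}\,t^{-1},E_{\ts b1}\in\nt$ on $W$, because by \eqref{ecdj} those actions also involve a sum over all $N$ slots of $\P_N\ot(\CC^{\ts m})^{\ot N}$. Only after adding the conjugation defects of the $1\ot\si_{pr}$ terms --- expanded as $\sum_{i\ge0}\sum_{a,b}x_p^{\,-i}x_r^{\,i}\ot E_{\ts ab}^{\ts(p)}\ts E_{\ts ba}^{\ts(r)}\ot\id\,$, which requires passing to rational functions in $x_1\lcd x_N$ --- do the boundary terms assemble into the full operators \eqref{ecdj} for elements of $\nt\ts$, and hence vanish (respectively contribute $\id\ts$) on the coinvariants. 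The paper does exactly this by grouping $\sum_{r\neq p}\frac{x_p}{x_p-x_r}\ot\si_{pr}\ot\id$ together with \eqref{gap} into a single operator $T_p$ before conjugating; without this regrouping your term-by-term computation leaves uncancelled residue on $W_{\,\nt}\,$. A smaller point: the $+\ts\id$ does not come from the $\ka\,x_p\ts\d_p$ term, whose conjugation yields the correction $\ka\,E_{\ts11}^{\ts(p)}$; only after combining this with the central correction $-\,\ell\,E_{\ts11}^{\ts(p)}$ and the $E_{\ts b1}\ts$-terms does one get $\sum_{b}E_{\ts bb}^{\ts(p)}=\id\,$.
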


\begin{proof}
Extend the vector space \eqref{w}
by replacing its first tensor factor $\P_N$ 
by the space of all complex valued 
rational functions in the variables $x_1\lcd x_N$
with the permutation action of the symmetric group $\Sym_N\,$.
Extend the action on \eqref{w} of the element $\pi$ accordingly.
To this end, identify the tensor product 
$\P_N\ot(\CC^{\ts m})^{\ot N}$ in \eqref{w}
with the the tensor product of $N$ copies
of $\CC^{\ts m}[\ts t\ts,t^{-1}\ts]$ as above.
Then restate the definition of the action
of $\pi$ on the vector space $\CC^{\ts m}[\ts t\ts,t^{-1}\ts]$
by regarding the latter as the tensor product
$\CC^{\ts m}\ot\CC\ts[\ts t\ts,t^{-1}\ts]\,$.
 
For any $p=1\lcd N$ consider the following three
operators on the extended vector space,
\begin{gather*}
D_p=\ka\,x_p\,\d_p\ot\id^{\,\ot N}\ot\id\,,
\quad
R_{\ts p}=\sum_{r\neq p}\,\frac{x_p}{x_p-x_r}\,\si_{pr}\ot\si_{pr}\ot\id\,,
\\[2pt]
T_p=
\sum_{r\neq p}\,\frac{x_p}{x_p-x_r}\ot\si_{pr}\ot\id\ +
\sum_{i=0}^\infty\,\sum_{a,b=1}^m\, 
x_p^{\,-i}\ot E_{\ts ab}^{\ts(p)}\ot E_{\ts ba}\,t^{\,i}\,.
\end{gather*}
Then the operator \eqref{up} is the restriction of the operator
$D_p-R_{\ts p}+T_p$ to the space~\eqref{w}.

\enlargethispage{12pt}

By identifying
$\P_N\ot(\CC^{\ts m})^{\ot N}$ with 
the tensor product of $N$ copies
of $\CC^{\ts m}[\ts t\ts,t^{-1}\ts]$
and using the action of the element $\pi$ on the $p\,$th of these $N$ copies 
as defined in Subsection~\ref{sec:24},
\begin{equation}
\label{Dp}
\pi\ts\,D_p\,\pi^{-1}=D_p+\id\ot \ka\,E_{\ts11}^{\ts(p)}\ot\id\,.
\end{equation}
The action of $\pi$ on the tensor product 
$\P_N\ot(\CC^{\ts m})^{\ot N}$
commutes with the multiplication by any element of $\P_N$ in the first 
tensor factor. It also commutes with the operator $\si_{pr}\ot\si_{pr}$
for any $r\neq p\,$. Therefore
$$
\pi\ts\,R_{\ts p}\,\pi^{-1}=R_{\ts p}\,.
$$

Consider the operator $T_p\,$. In its definition, 
the summand corresponding to any $r\neq p$ can be rewritten as
$$
\sum_{i=0}^\infty\,\sum_{a,b=1}^m\,
x_p^{\,-i}\ts x_r^{\,i}\ot
E_{\ts ab}^{\ts(p)}\ts E_{\ts ba}^{\ts(r)}\ot\id\,. 
$$
Therefore $\pi\ts\,T_p\,\pi^{-1}$ is equal to 
the sum over the indices $i=0\ts,1\ts,\ldots$ and
$a,b=1\lcd m$ of 
\begin{gather*}
\sum_{r\neq p}\, 
x_p^{\,-i-\de_{am}+\de_{\ts bm}}\ts x_r^{\,i+\de_{am}-\de_{\ts bm}}\ot
E_{\ts a+1,b+1}^{\ts(p)}\ts E_{\ts b+1,a+1}^{\ts(r)}\ot\id\ +
\\[4pt]
x_p^{\,-i-\de_{am}+\de_{\ts bm}}\ot
E_{\ts a+1,b+1}^{\ts(p)}\ot
(\ts E_{\ts b+1,a+1}\,t^{\,i+\de_{am}-\de_{\ts bm}}-\ts
\de_{i\ts0}\,\de_{ab}\,\de_{\ts bm}\,\ell\,)\,.
\end{gather*}
Here we use the action of $\pi$ on $\Ur(\ts\glhat_m\ts)$ 
as defined in Subsection \ref{sec:24}.
By the definition of $T_p\,$,
the sum over the indices $i$ and $a,b$
of the expressions in the two lines displayed above equals
\begin{gather*}
T_p
\,+\,
\sum_{r\neq p}\,\sum_{a\neq m}\,
x_p\ts\,x_r^{\,-1}\ot
E_{\ts a+1,1}^{\ts(p)}\,E_{\ts 1,a+1}^{\ts(r)}\ot\id
\,-\,
\sum_{r\neq p}\,\sum_{b\neq m}\,\id\ot
E_{\ts 1,b+1}^{\ts(p)}\,E_{\ts b+1,1}^{\ts(r)}\ot\id\ +
\\[6pt]
\sum_{a\neq m}\, 
x_p\ot E_{\ts a+1,1}^{\ts(p)}\ot E_{\ts 1,a+1}\,t^{\,-1}
\,-\,
\sum_{b\neq m}\,
\id\ot E_{\ts 1,b+1}^{\ts(p)}\ot E_{\ts b+1,1}
\,-\,\id\ot\ell\,E_{\ts11}^{\ts(p)}\ot\id\,.
\end{gather*}
By adding to this result the right hand side of \eqref{Dp} and by
subtracting $R_{\ts p}\,$, we get back~the 
Cherednik operator \eqref{up} plus the sum
\begin{gather*}
\sum_{r\neq p}\,\sum_{a\neq1}\,
x_p\ts\,x_r^{\,-1}\ot
E_{\ts a1}^{\ts(p)}\,E_{\ts 1a}^{\ts(r)}\ot\id
\,-\,
\sum_{r\neq p}\,\sum_{b\neq1}\,\id\ot
E_{\ts 1b}^{\ts(p)}\,E_{\ts b1}^{\ts(r)}\ot\id\ +
\\[4pt]
\sum_{a\neq1}\, 
x_p\ot E_{\ts a1}^{\ts(p)}\ot E_{\ts 1a}\,t^{\,-1}
\,-\,
\sum_{b\neq1}\,
\id\ot E_{\ts 1b}^{\ts(p)}\ot E_{\ts b1}
\,+\,\id\ot m\,E_{\ts11}^{\ts(p)}\ot\id\,.
\end{gather*}
Here we used the equality $\ka-\ell=m\ts$ and
replaced the indices $\ts a+1,b+1\ts$ respectively by~$a,b\,$.

The sum in the last two displayed lines can be rewritten as
\begin{gather*}
\sum_{a\neq1}\,
\Bigl(\ 
\sum_{r=1}^N
x_r^{\,-1}\ot E_{\ts 1a}^{\ts(r)}\ot\id
+
\id\ot\id^{\,\ot N}\ot E_{\ts 1a}\,t^{\,-1}
\ts\Bigr)\cdot
x_p\ot E_{\ts a1}^{\ts(p)}\ot\id\ +
\\[-2pt]
\sum_{b=1}^m\,
\id\ot E_{\ts bb}^{\ts(p)}\ot\id
\,-\,
\sum_{b\neq1}\,
\Bigl(\ 
\sum_{r=1}^N
\id\ot E_{\ts b1}^{\ts(r)}\ot\id
+
\id\ot\id^{\,\ot N}\ot E_{\ts b1}
\,\Bigr)\cdot
\id\ot E_{\ts 1b}^{\ts(p)}\ot\id\,.
\end{gather*}
Here the sum over $b=1\lcd m$ is the identity operator on $W$.
For any $a\neq1$ the element $E_{\ts1a}\,t^{-1}\in\nt$ acts on $W$ as
the sum in the brackets in the first of the two lines displayed here,
see \eqref{ecdj}. Hence the whole expression in that line
vanishes on the quotient $W_{\,\nt}\,$.  
Further, for any $b\neq1$ the element $E_{\ts b1}\in\nt\ts$ acts on $W$
as the sum in the brackets in the second 
of~the two lines.
Hence the whole expression in that line
acts on $W_{\,\nt}\ts$
as the identity operator.  
\end{proof}

Below is a version of Proposition \ref{2.4} in the case
when $V$ is a module not of $\glhat_m$ but only of $\slhat_m\,$.
Here we regard $W$ as $\slhat_m\ts$-module,
and use the action of the element 
$\pi\in\R_m$ on the corresponding
space $W_{\,\nt}\ts$ of $\nt\ts$-coinvariants.
Under the assumption $\ell=\ka-m\,$, 
the modified Cherednik operator \eqref{ups} 
acts on $W_{\,\nt}\ts$ due to Corollary~\ref{2.2}.
Let us denote by $\theta_{\,p}$ the operator on $W_{\,\nt}$
corresponding to \eqref{ups}.

\begin{cor}
\label{2.5}
If the\/ $\slhat_m\ts$-module $V$ has level $\ka-m$ then
for $p=1\lcd N$ we have an equality of operators on\/ 
$W_{\,\nt}$
\begin{equation*}
\pi\ts\,\theta_{\,p}\,\pi^{-1}\,=\,\theta_{\,p}\,+\,\frac{\ka}m\,\,\id\,.
\end{equation*}
\end{cor}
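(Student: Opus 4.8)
The plan is to deduce the corollary from Proposition \ref{2.4} by separating off the ``trace part'' of the Cherednik operator; throughout, $\ell=\ka-m$ denotes the level. Suppose first that the $\slhat_m\ts$-module $V$ extends to a $\glhat_m\ts$-module of level $\ell\,$, in such a way that the given $\R_m\ts$-action on $V$ remains compatible with the crossed product $\R_m\ltimes\Ur(\glhat_m)$ of Subsection \ref{sec:25}. Comparing the definitions \eqref{up} and \eqref{ups}, the operator \eqref{ups} is obtained from \eqref{up} by subtracting
$$
\Lambda_p\,=\,\frac1m\,\sum_{i=0}^\infty\,x_p^{\,-i}\ot\id^{\,\ot N}\ot I\,t^{\,i}\,,
$$
in which $I\,t^{\,i}$ acts on the last tensor factor $V$ of $W\,$; hence $\theta_{\,p}=\ze_{\,p}-\Lambda_p$ as operators on $W_{\,\nt}\,$, where $\ze_{\,p}$ is the operator of Proposition \ref{2.4}. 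The action of $\pi$ on $W$ is the tensor product of its action on $\P_N\ot(\CC^{\ts m})^{\ot N}$ and its action on $V\,$. The first of these commutes with multiplication by $x_p^{\,\pm1}$ and with $\id^{\,\ot N}\,$, because the shift of grading $\pi$ produces on the $p\,$th copy of $\CC^{\ts m}[\ts t\ts,t^{-1}\ts]$ depends only on the $\CC^{\ts m}\ts$-index and not on the power of $t\,$; and by the formulas of Subsection \ref{sec:24} we have $\pi:E_{\ts aa}\,t^{\,i}\mapsto E_{\ts a+1,\ts a+1}\,t^{\,i}-\de_{\ts i0}\,\de_{\ts am}\,C$ (indices taken modulo $m$), whence $\pi:I\,t^{\,i}\mapsto I\,t^{\,i}-\de_{\ts i0}\,C\,$, acting on the level-$\ell$ module $V$ as $I\,t^{\,i}-\de_{\ts i0}\,\ell\,$. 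Therefore $\pi\,\Lambda_p\,\pi^{-1}=\Lambda_p-\tfrac\ell m\,\id$ already on $W\,$, and combining this with Proposition \ref{2.4} we obtain on $W_{\,\nt}$
$$
\pi\,\theta_{\,p}\,\pi^{-1}\,=\,\pi\,\ze_{\,p}\,\pi^{-1}-\pi\,\Lambda_p\,\pi^{-1}
\,=\,(\ze_{\,p}+\id)-\bigl(\Lambda_p-\tfrac\ell m\,\id\bigr)
\,=\,\theta_{\,p}+\tfrac\ka m\,\id\,,
$$
since $\ka=m+\ell\,$.

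To remove the hypothesis that $V$ extends to $\glhat_m\,$, the plan is to reduce to the case just treated by tensoring $V$ with a module built from the Heisenberg subalgebra of $\glhat_m$ spanned by the elements $I\,t^{\,i}$, $i\in\ZZ\,$, and by $C\,$. Take $F=\bigoplus_{k\in\ZZ}F^{\ts(k)}\,$, where $F^{\ts(k)}$ is the Fock module of this Heisenberg algebra of level $\ell$ in which $I=I\,t^{\,0}$ acts as the scalar $k\ts\ell\,$; make $F$ a $\glhat_m\ts$-module by letting $\sl_m[\ts t\ts,t^{-1}\ts]$ act by zero and $C$ by $\ell\,$, and an $\R_m\ts$-module by letting each $\tau_c$ act as the identity and $\pi$ act through a fixed system of isomorphisms $F^{\ts(k)}\to F^{\ts(k+1)}\,$. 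One checks that $F$ is then a $\glhat_m\ts$-module of level $\ell$ which satisfies the annihilation condition of Subsection \ref{sec:22} and carries a compatible $\R_m\ts$-action; hence so does $\tilde V=V\ot F\,$, and the restriction of $\tilde V$ to $\slhat_m$ is a direct sum of copies of $V\,$. Since $\nt\subset\slhat_m$ and the action of $\sl_m[\ts t\ts,t^{-1}\ts]$ on $\tilde V$ is carried by the factor $V$ alone, the space of $\nt\ts$-coinvariants attached to $\tilde V$ is canonically $W_{\,\nt}\ot F\,$, on which $\theta_{\,p}$ acts as $\theta_{\,p}\ot\id_F$ and $\pi$ acts as $\pi\ot\Pi\,$, with $\Pi$ the action of $\pi$ on $F\,$. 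Applying the identity of the previous paragraph to $\tilde V$ and cancelling the nonzero factor $F$ then yields the corollary for $V$.

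The main obstacle is this reduction. At a nonzero level $\ell$ there is no $\glhat_m\ts$-module that is a single Fock module and at the same time carries a compatible action of the full group $\R_m\,$: the element $I=I\,t^{\,0}$ is central, so it acts as a scalar on an irreducible Fock module, whereas $\pi$ must shift it by $C\,$; this forces $F$ to be the infinite direct sum of Fock modules permuted by $\pi\,$, and one then has to verify that $\tilde V$ meets every hypothesis of Proposition \ref{2.4}. Everything else is routine: the decomposition $\theta_{\,p}=\ze_{\,p}-\Lambda_p$ is read off from the definitions, and $\pi\,\Lambda_p\,\pi^{-1}=\Lambda_p-\tfrac\ell m\,\id$ is a one-line computation from Subsection \ref{sec:24}. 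Alternatively one could bypass the reduction and repeat the proof of Proposition \ref{2.4} with \eqref{gaps} in place of \eqref{gap}: in the sum over $a,b$ of the traceless combinations the central terms cancel, so the summand $-\,\id\ot\ell\,E_{\ts11}^{\ts(p)}\ot\id$ of that proof disappears, and retracing the argument with $\ka-\ell=m$ produces $\tfrac\ka m\,\id$ in place of $\id$ on $W_{\,\nt}\,$; the cost is that the bookkeeping of all the terms that vanish on the coinvariants must be redone from scratch.
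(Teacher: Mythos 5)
Your first paragraph reproduces the paper's own proof: the authors likewise write \eqref{ups} as \eqref{up} minus the trace sum $\frac1m\sum_i x_p^{-i}\ot\id^{\ot N}\ot I\,t^{\,i}$, observe that conjugation by $\pi$ subtracts $\frac1m\,\id\ot\id^{\ot N}\ot C$ from that sum (hence $\frac{\ka-m}{m}\,\id$ on a level $\ka-m$ module), and combine this with the shift by $\id$ from Proposition \ref{2.4}. Where you part company with the paper is in treating the non-extendability of $V$ to $\glhat_m$ as "the main obstacle." The paper does not address it at all: it reads the splitting as a formal manipulation, which is legitimate because every expression surviving the cancellation --- \eqref{ups} itself and all the correction terms generated in the proof of Proposition \ref{2.4}, which involve only $E_{1a}\,t^{-1},E_{\ts b1}\in\nt$, the central element $C$, and operators on $\P_N\ot(\CC^{\ts m})^{\ot N}$ --- lies in (a completion of) $\Ur(\ts\slhat_m\ts)$ in the last tensor slot, and $\pi$ preserves $\slhat_m\,$; the resulting identity therefore specializes to any $\slhat_m\ts$-module with a compatible $\pi$-action. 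This is exactly your "alternatively" route, and it costs much less than you suggest, since no bookkeeping changes. Your Fock-module reduction is a legitimate, more structural alternative, but as written it contains one imprecision: declaring $F$ a $\glhat_m\ts$-module of level $\ell$ and then forming "$V\ot F$" cannot mean a diagonal action ($V$ is not a $\glhat_m\ts$-module, and identifying the centers naively would double the level). What your later sentences show you intend is the external product over the decomposition of $\glhat_m$ into $\slhat_m$ plus the Heisenberg algebra spanned by the $I\,t^{\,i}$ and $C$, with the two central elements identified and acting as the single scalar $\ell\,$. With that stated precisely, and the verification that $V\ot F$ meets the hypotheses of Proposition \ref{2.4} (which you rightly flag), your argument is complete --- it simply replaces the paper's two-line formal computation by a longer but self-contained reduction.
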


\noindent{\it Proof.}
The modified Cherednik operator \eqref{ups} is obtained by subtracting
from \eqref{up} the~sum
$$
\frac1m\,\ts\sum_{i=0}^\infty\,
x_p^{\,-i}\ot\id^{\,\ot\ts N}\ot I\,t^{\,i}\,.
$$
But the action of $\pi\in\R_m$ on the latter sum amounts to 
subtracting from it the operator 
$$
\frac1m\,\,\id\ot\id^{\,\ot\ts N}\ot C\,,
$$
see Subsection \ref{sec:24}.
Since the module $V$ has level $\ka-m\,$, 
our Proposition \ref{2.4} implies that
$$
\pi\ts\,\theta_{\,p}\,\pi^{-1}\,=\,
\theta_{\,p}\,+\,\frac{\ka-m}m\,\,\id\,+\,\id\,=\,
\theta_{\,p}\,+\,\frac{\ka}m\,\,\id\,\,.
\eqno{\square}
$$


\section{Zhelobenko operators}
\label{sec:3}
\medskip


\subsection{}
\label{sec:31}

Let us denote by $\th$ the subalgebra of $\glhat_m$
with the basis vectors $C$ and $E_{\ts 11}\lcd E_{\ts mm}\,$.
Note that $\th$ contains the subalgebra $\hh$ of $\slhat_m\,$.
Consider the action of the extended affine~Weyl group $\R_m$
on $\glhat_m$ defined in Subsection \ref{sec:24}.
This action preserves the subalgebra $\th\subset\glhat_m\,$.
By definition, we have $\pi\ts(C)=C$ and 
$\tau\ts(C)=C$ for all $\tau\in\T_m\,$. Further, we have
\begin{gather*}
\pi\ts(E_{\ts dd})=E_{\ts d+1,d+1}
\quad\text{for}\quad
1\le d<m\,,
\quad
\pi\ts(E_{\ts mm})=E_{\ts 11}-C\,,
\\[6pt]
\tau_{\,0}\ts(E_{\ts 11})=E_{\ts mm}+C\,,
\quad
\tau_{\,0}\ts(E_{\ts dd})=E_{\ts dd}
\quad\text{for}\quad
1<d<m\,,
\quad
\tau_{\,0}\ts(E_{\ts mm})=E_{\ts 11}-C
\end{gather*}
while the generators
$\tau_{\,1}\lcd\tau_{\,m-1}$ act on the basis vectors 
$E_{\ts 11}\lcd E_{\ts mm}$ naturally, that is 
by transpositions of the indices $1\lcd m\,$.
Note that here we also have
$$
\pi^{-1}\ts(E_{\ts dd})=E_{\ts d-1,d-1}
\quad\text{for}\quad
1<d\le m\,,
\quad
\pi^{-1}\ts(E_{\ts11})=E_{\ts mm}+C\,.
$$

We will also use the action of the group 
$\R_m$ on the vector space $\th^{\,\ast}$, dual to the above described
action on $\th\,$. To describe the dual action explicitly,
let $C^{\,\ast}$ and $E_{\ts 11}^{\,\ast}\lcd E_{\ts mm}^{\,\ast}$ be
the basis vectors of $\th^{\,\ast}$ dual to our chosen basis vectors
of $\th\,$. Then 
$$
\pi\ts(C^{\,\ast})=C^{\,\ast}+E_{\ts 11}^{\,\ast}
\quad\text{and}\quad
\tau_{\,0}\ts(C^{\,\ast})=C^{\,\ast}+
E_{\ts 11}^{\,\ast}-E_{\ts mm}^{\,\ast}\,,
$$
\begin{gather*}
\pi\ts(E_{\ts dd}^{\,\ast})=E_{\ts d+1,d+1}^{\,\ast}
\quad\text{for}\quad
1\le d<m\,,
\quad
\pi\ts(E_{\ts mm}^{\,\ast})=E_{\ts11}^{\,\ast}\,,
\\[6pt]
\tau_{\,0}\ts(E_{\ts 11}^{\,\ast})=E_{\ts mm}^{\,\ast}\,,
\quad
\tau_{\,0}\ts(E_{\ts dd}^{\,\ast})=E_{\ts dd}^{\,\ast}
\quad\text{for}\quad
1<d<m\,,
\quad
\tau_{\,0}\ts(E_{\ts mm}^{\,\ast})=E_{\ts 11}^{\,\ast}
\end{gather*}
while the generators 
$\tau_{\,1}\lcd\tau_{\,m-1}$ leave 
$C^{\,\ast}$ invariant and act on the vectors 
$E_{\ts 11}^{\,\ast}\lcd E_{\ts mm}^{\,\ast}$ 
by transpositions of the indices $1\lcd m\,$.

Now for any given $\ell\in\CC$
and for any weight $\mu\in\t^{\ts\ast}$ define an element 
$\muh\in\th^{\,\ast}$ by setting
\begin{equation*}
\muh\ts(C)=\ell
\quad\text{and}\quad
\muh\ts(X)=\mu\ts(X)
\quad\textrm{for all}\quad
X\in\t\,.
\end{equation*}
Equivalently,
\begin{equation*}
\muh=\ell\,C^{\,\ast}+
\mu_1\ts E_{\ts 11}^{\,\ast}+\ldots+\mu_m\ts E_{\ts mm}^{\,\ast}\,.
\end{equation*}
Then
$$
\pi\ts(\ts\muh\ts)=
\ell\,C^{\,\ast}+
(\ts\mu_m+\ell\,)\ts E_{\ts 11}^{\,\ast}+
\mu_1\ts E_{\ts 22}^{\,\ast}+\ldots+
\mu_{m-1}\ts E_{\ts mm}^{\,\ast}\,,
$$
\vspace{-10pt}
$$
\tau_{\,0}\ts(\ts\muh\ts)=
\ell\,C^{\,\ast}+
(\ts\mu_m+\ell\,)\ts E_{\ts 11}^{\,\ast}+
\mu_2\ts E_{\ts 22}^{\,\ast}+\ldots+
\mu_{m-1}\ts E_{\ts m-1,m-1}^{\,\ast}+
(\ts\mu_1-\ell\,)\ts E_{\ts mm}^{\,\ast}\,.
$$
In particular, for any given $\ell\in\CC$ the action of 
$\R_m$ on $\th^{\,\ast}$
preserves the set of weights of the form $\muh\,$. 
So we get an action of 
$\R_m$ on the set of sequences 
of length $m$ of complex~numbers,
\begin{align*}
\pi&:(\ts\mu_1\lcd\mu_m\ts)\mapsto
(\ts\mu_m+\ell\,,\mu_1\ts\lcd\mu_{m-1}\,)
\\[4pt]
\tau_{\,0}&:(\ts\mu_1\lcd\mu_m\ts)\mapsto
(\ts\mu_m+\ell\,,\mu_2\ts\lcd\mu_{m-1}\ts,\ts\mu_1-\ell\,)
\end{align*}
while
$\tau_{\,1}\lcd\tau_{\,m-1}$ naturally act on these sequences
by transpositions of the indices $1\lcd m\,$.
Note that via the isomorphism $\R_m\to\Sym_m\ltimes\ZZ^m$
chosen in Subsection \ref{sec:24},
the same action of $\R_m$ on the sequences
can be obtained by letting the elements of
$\Sym_m$ act by permutations, while the elements of $\ZZ^m$ 
act by addition of the respective elements of~$\,\ell\,\ZZ^m\subset\CC^m\,$.

We will also employ the \emph{shifted action\/} of the group $\R_m$ on
$\th^{\,\ast}$. It is defined 
by adding 
\begin{equation}
\label{rho}
m\,C^{\,\ast\!}
-E_{\ts 11}^{\,\ast}-2\ts E_{\ts 22}^{\,\ast}-\ldots-m\,E_{\ts mm}^{\,\ast}
\end{equation}
to the elements of $\,\th^{\,\ast}$, then applying the above described
action of $\R_m\,$, and then subtracting \eqref{rho}.
We will employ the symbol $\comp$ to denote the shifted action.
Put $\ep_{\ts0}=E_{\ts mm}^{\,\ast}-E_{\ts 11}^{\,\ast}$
while $\ep_{\ts c}=E_{\ts cc}^{\,\ast}-E_{\ts c+1,c+1}^{\,\ast}$
for $c=1\lcd m-1\,$. Note that then 
\begin{equation}
\label{plusep}
\tau_c\,\comp\,\muh=\tau_c\ts(\,\muh+\ep_{\ts c}\ts)
\quad\text{for}\quad
c=0\ts,1\lcd m-1\,.
\end{equation}

For any given $\ell$ 
the shifted action of $\R_m$ on $\th^{\,\ast}$
preserves the set of weights of the form $\muh\,$. 
Hence we get a shifted action of 
$\R_m$ on the set of sequences 
of length $m$ of complex numbers. 
We will use use the symbol $\comp$ to denote it as well.
Then for any sequence $\mu=(\ts\mu_1\lcd\mu_m\ts)$
\begin{gather*}
\pi\,\comp\,\mu=
(\ts\mu_m+\ell+1\ts,\mu_1+1\ts\lcd\mu_{\ts m-1}+1\ts)\,,
\\[4pt]
\tau_{\,0}\,\comp\,\mu=
(\ts\mu_m+\ell+1\ts,\mu_2\ts\lcd\mu_{\ts m-1}\ts,\mu_1-\ell-1\ts)\,,
\\[4pt]
\tau_c\,\comp\,\mu=
(\ts\mu_1\lcd\mu_{\ts c-1}\ts,
\mu_{\ts c+1}-1\,,\mu_{c}+1\ts,
\mu_{\ts c+2}\ts\lcd\mu_m\ts)
\quad\text{for}\quad c=1\lcd m-1\,. 
\end{gather*}

Note that via our isomorphism $\R_m\to\Sym_m\ltimes\ZZ^m\,$,
the same shifted action of the group $\R_m$ on the sequences
can be obtained by using the last displayed formula and 
by letting the elements of the subgroup $\ZZ^m\subset\Sym_m\ltimes\ZZ^m\,$ 
act by addition of the respective elements of $\,\ka\,\ZZ^m$
where $\ka=\ell+m\,$.
Indeed, because the group $\R_m$ is generated by
$\tau_{\,1}\lcd\tau_{\,m-1}$ and $\pi\,$, it suffices to 
check the coincidence of two actions of the element $\pi$ only.
Its image under the isomorphism $\R_m\to\Sym_m\ltimes\ZZ^m$ is
the product $(1,0\lcd 0)\,\si_1\ldots\si_{\ts m-1}\,$,
see Subsection \ref{sec:24}. But by our definition of
the shifted action of the group $\Sym_m\ltimes\ZZ^m$ 
on the sequences we have 
\begin{align*}
\si_1\ldots\si_{\ts m-1}\,\comp\,\mu
&=
(\ts\mu_m-m+1\ts,\mu_1+1\ts\lcd\mu_{\ts m-1}+1\ts)\,,
\\[4pt]
(1,0\lcd 0)\,\si_1\ldots\si_{\ts m-1}\,\comp\,\mu
&=
(\ts\mu_m-m+\ka+1\ts,\mu_1+1\ts\lcd\mu_{\ts m-1}+1\ts)
\\[4pt]
&=
(\ts\mu_m+\ell+1\ts,\mu_1+1\ts\lcd\mu_{\ts m-1}+1\ts)\,.
\end{align*}


\subsection{}
\label{sec:32}

Consider the tensor product of $N$ copies
of the $\slhat_m\ts$-module $\CC^{\ts m}[\ts t\ts,t^{-1}\ts]$ of level zero.
In Subsection~\ref{sec:22} we identified the vector space
of this tensor product with $\P_N\ot(\CC^{\ts m})^{\ot N}$. 
Put 
\begin{equation*}
\Br=\P_N\ot(\CC^{\ts m})^{\ot N}\ot\Ur(\ts\slhat_m\ts)\,.
\end{equation*}
Following \cite{KNV} 
we will regard $\Br$ as bimodule over the associative algebra
$\Ur(\ts\slhat_m\ts)$ by setting
\begin{equation*}
\label{XPA}
X\,(P\ot A)=X\ts P\ot A+P\ot X\ts A
\quad\text{and}\quad
(P\ot A)\,X=P\ot A\,X
\end{equation*}
for $X\in\slhat_m$ while $P\in\P_N\ot(\CC^{\ts m})^{\ot N}$
and $A\in\Ur(\ts\slhat_m\ts)\,$. So the left module
structure on $\Br$ is defined by regarding
$\Ur(\ts\slhat_m\ts)$ as a module over itself via left 
multiplication, and then taking its tensor product with the module
$\P_N\ot(\CC^{\ts m})^{\ot N}$ by using the standard comultiplication on
$\Ur(\ts\slhat_m\ts)\,$. 
The right module structure on $\Br$ is defined 
by using only the right multiplication in the
tensor factor $\Ur(\ts\slhat_m\ts)$ of $\Br\,$.
We will also use the \emph{adjoint action} of $\Ur(\ts\slhat_m\ts)$ 
on $\Br\,$. Here
$$
\ad_{\ts X}(P\ot A)=X\,(P\ot A)-(P\ot A)\,X=
X\ts P\ot A+P\ot [\ts X\ts,A\ts]\,.
$$

The action of the 
group $\R_m$ on the Lie algebra $\glhat_m$ 
preserves the subalgebra $\slhat_m\,$.
By again identifying the tensor product of $N$ copies
of $\CC^{\ts m}[\ts t\ts,t^{-1}\ts]$ with 
$\P_N\ot(\CC^{\ts m})^{\ot N}\ts$,
we get an action of the group $\R_m$ on the 
former vector space, and hence on the vector space~of~$\Br\,$.

Consider the universal enveloping algebra $\Hr$
of the Abelian Lie algebra $\hh\subset\slhat_m\,$.
Let $\Hb$ be the ring of fractions of the commutative
algebra $\Hr$ with the set of denominators generated by
\begin{equation}
\label{denset}
\{\,E_{\ts aa}-E_{\ts bb}+i\,C+j\ |\ 
1\le a<b\le m\ \,\text{and}\ \,i\,,j\in\ZZ\,\ts\}\,.
\end{equation}
The elements of this ring can also be
regarded as rational functions on the vector space~$\hh^{\ts\ast}\ts$. 
The elements of $\Hr\subset\Hb$
are then regarded as polynomial functions on $\hh^{\ts\ast}\ts$.
Further, let $\Ub(\ts\slhat_m\ts)$ be the ring of fractions of the algebra
$\Ur(\ts\slhat_m\ts)$ with the same set of denominators.

Let us denote
\begin{equation*}
\Bb=\P_N\ot(\CC^{\ts m})^{\ot N}\ot\Ub(\ts\slhat_m\ts)\,.
\end{equation*}
Using the right multiplication in the algebra $\Ub(\ts\slhat_m\ts)\ts$, 
the right action of $\Ur(\ts\slhat_m\ts)$ on $\Br$ extends to
a right action of $\Ub(\ts\slhat_m\ts)$ on $\Bb\ts$. 
To extend the left action of $\Ur(\ts\slhat_m\ts)$ 
on $\Br$ to
a right action of $\Ub(\ts\slhat_m\ts)$ on $\Bb\ts$,
note that the vector space of $\Br$ has a basis of elements $Y$
such that for any $a,b$ and $i$ as in \eqref{denset}
there exists $k\in\ZZ$ also depending on $Y$, such that
$$
\ad_{\ts E_{\ts aa}-E_{\ts bb}+i\,C}\,(Y)=k\,Y\ts.
$$
Then we set
$$
(\ts E_{\ts aa}-E_{\ts bb}+i\,C+j\ts)^{\ts-1}\,Y=
Y(\ts E_{\ts aa}-E_{\ts bb}+i\,C+j+k\ts)^{\ts-1}\,.
\hspace{-24pt}
$$
Hence the vector space $\Bb$ becomes a bimodule over the algebra 
$\Ub(\ts\slhat_m\ts)\,$.

The action of the 
group $\R_m$ on the Lie algebra $\glhat_m$ 
preserves the subalgebra $\hh\subset\slhat_m\,$. Moreover, 
the resulting action of $\R_m$ on $\Hr$ preserves the set of denominators 
generated by \eqref{denset}. So the action of 
$\R_m$ extends from $\Br$ to $\Bb\ts$. We will use the extended 
action~later.


\subsection{}
\label{sec:33}

The Lie algebra $\slhat_m$ is generated by the elements 
$$
E_{\ts0}=E_{\ts m1}\,t\,,
\quad
F_{\ts0}=E_{\ts 1m}\,t^{-1}\,,
\quad
H_{\ts0}=C-E_{\ts 11}+E_{\ts mm}
$$
and 
$$
E_{\ts c}=E_{\ts c,c+1}\,,
\quad
F_{c}=E_{\ts c+1,c}\,,
\quad
H_c=E_{\ts cc}-E_{\ts c+1,c+1}
\quad\text{where}\quad c=1\lcd m-1\,.
\hspace{-20pt}
$$
For each $c=0\ts,1\lcd m-1$ the elements $E_c\,,F_c\,,H_c$
span a subalgebra of $\slhat_m$
isomorphic to the Lie algebra $\mathfrak{sl}_{\ts2}\,$.
We will also use the element $\ep_{\ts c}\in\th^{\,\ast}$
defined in Subsection~\ref{sec:31}.

Take the vector spaces $\Br$ and $\Bb$
introduced in Subsection \ref{sec:32}.
For $c=0\ts,1\lcd m-1$ define a linear map $\xi_{\ts c}:\Br\to\Bb$
by setting
\begin{equation}
\label{q1}
\xi_{\ts c}\,(\ts Y)=Y+\,\sum_{n=1}^\infty\,\,
(\ts n\ts!\,H_c^{\ts(n)}\ts)^{-1}\ts E_{\ts c}^{\,n}
\ad_{\ts F_c}^{\,n}(\ts Y)
\end{equation}
for any $Y\in\Br\ts$. Here
$$
H_c^{\ts(n)}=H_c(H_c-1)\ldots(H_c-n+1)
$$
and we take the $n\,$th power of the adjoint operator
corresponding to the element $F_c\in\slhat_m\,$.
For any given 
$Y\in\Br$ only finitely many terms of the sum 
\eqref{q1} differ from zero, so the map $\xi_{\ts c}$ is well defined.
The definition \eqref{q1} and the next proposition
go back to \cite[Section~2\ts]{Z}.
By using the left action of the Lie subalgebra $\nt\subset\slhat_m\,$,
introduce the vector subspaces
$$
\Jr=\nt\,\ts\Br\ts\subset\ts\Br
\quad\text{and}\quad
\Jb=\nt\,\ts\Bb\ts\subset\ts\Bb\,.
$$

\begin{prop}
\label{3.1}
For any $X\in\hh$ and\/ $Y\in\Br$ we have
\begin{align}
\label{q11}
\xi_{\ts c}\ts(\ts X\ts Y\ts)&\in
(\ts X+\ep_c(X))\,\ts\xi_{\ts c}(\ts Y)\ts+\ts\Jb\ts,
\\[2pt]
\label{q12}
\xi_{\ts c}\ts(\ts Y X\ts)&\in\,
\xi_{\ts c}(\ts Y)\ts(\ts X+\ep_c(X))\ts+\ts\Jb\ts.
\end{align}
\end{prop}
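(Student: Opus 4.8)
The plan is to verify the two congruences directly from the definition \eqref{q1} by computing how the operator $\xi_{\ts c}$ interacts with left and right multiplication by $X\in\hh$. The crucial input is the representation theory of the $\sl_2$-triple $(E_c,F_c,H_c)$: since $X\in\hh$ commutes with $H_c$ and satisfies $[\ts X,E_c\ts]=\ep_c(X)\,E_c$ and $[\ts X,F_c\ts]=-\,\ep_c(X)\,F_c$ (these follow from the bracket relations \eqref{hatcom} together with the definitions of $E_c,F_c,H_c,\ep_c$ in Subsection~\ref{sec:33} and Subsection~\ref{sec:31}), the element $X$ is essentially a ``weight-counting'' operator for this $\sl_2$. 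First I would record the commutation identity, valid in $\Ur(\ts\slhat_m\ts)$ or acting via $\ad$,
$$
X\,E_c^{\,n}=E_c^{\,n}\,(X+n\,\ep_c(X))
\quad\text{and}\quad
\ad_{\ts F_c}^{\,n}(X\ts Y)=X\,\ad_{\ts F_c}^{\,n}(Y)-n\,\ep_c(X)\,\ad_{\ts F_c}^{\,n}(Y)+(\text{lower-}n\text{ terms}),
$$
the precise bookkeeping being the one standard $\sl_2$ computation; more cleanly, I would use $\ad_X\ad_{F_c}^{\,n}=\ad_{F_c}^{\,n}(\ad_X-n\,\ep_c(X))$ as operators on $\Br$.

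For \eqref{q11}: apply $\xi_{\ts c}$ to $X\ts Y$ using \eqref{q1}, and in the $n$-th term replace $\ad_{F_c}^{\,n}(X\ts Y)$. Here I must be careful that the left action of $X$ on $\Br$ is the tensor-coproduct action, but since $\ad_{F_c}$ is also built from the coproduct action it still satisfies $\ad_X\ad_{F_c}^{\,n}=\ad_{F_c}^{\,n}(\ad_X - n\ts\ep_c(X))$ on all of $\Br$, and $\ad_X(Y)=X\ts Y-Y\ts X$. The plan is to push all factors of $X$ to the left past $E_c^{\,n}$ (picking up the shift $n\ts\ep_c(X)$) and past $(n!\,H_c^{\ts(n)})^{-1}$ (which commutes with $X$ since $[\ts X,H_c\ts]=0$), then reassemble: the terms proportional to $(X+\ep_c(X))$ should collect into $(X+\ep_c(X))\,\xi_{\ts c}(Y)$, and the leftover terms should be expressible as elements of $\nt\ts\Bb=\Jb$. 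The source of the $\Jb$-valued error is precisely the mismatch between $\ad_X(Y)=XY-YX$ and genuine left multiplication, i.e.\ terms of the form (right multiplication by $X$) which, after being dragged through $E_c^{\,n}\ad_{F_c}^{\,n}$, land in $\nt\ts\Bb$ because $E_c\in\nt$ when $c\ge 1$ and $E_0=E_{m1}\,t\in\nt$ as well; hence every summand with $n\ge 1$ contributes something in $\Jb$. Identity \eqref{q12} is the mirror image: right multiplication by $X$ commutes with everything appearing in \eqref{q1} except that $\ad_{F_c}^{\,n}(Y X)=\ad_{F_c}^{\,n}(Y)\,X+(\text{commutator terms})$, and one tracks the commutators the same way, again absorbing unwanted pieces into $\Jb$ via $E_c\in\nt$.

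I expect the main obstacle to be bookkeeping rather than conceptual: one must keep straight three different actions of $X$ on $\Br$ (left via coproduct, right via the tensor factor, and adjoint), verify that $\ad_X$ acts on $E_c^{\,n}\ad_{F_c}^{\,n}(Y)$ exactly as the naive $\sl_2$ weight calculus predicts, and then show that the genuinely extra terms all lie in $\nt\ts\Bb$. The cleanest route is probably to prove the operator identity
$$
\xi_{\ts c}\,\ad_X=(\ad_X+\ep_c(X))\,\xi_{\ts c}
$$
on $\Br$ first (a pure $\sl_2$ statement, since $\ad$ makes $\Br$ a $\slhat_m$-module and $\xi_{\ts c}$ is the Zhelobenko/extremal projector-type operator attached to the $c$-th simple root), and then convert this adjoint-action identity into the left- and right-multiplication congruences \eqref{q11}--\eqref{q12} by writing $X\ts Y=\ad_X(Y)+Y\ts X$ and noting $\xi_{\ts c}(Y\ts X)-\xi_{\ts c}(Y)\ts X\in\Jb$, which again follows because every $n\ge1$ term of \eqref{q1} has a factor $E_c\in\nt$ on the left. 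The delicate point to get right is that the ring of fractions $\Ub(\ts\slhat_m\ts)$ and the extended left action make $H_c^{\ts(n)}$ invertible and commuting with $X$, so no denominators obstruct the manipulation; this is exactly what the construction in Subsection~\ref{sec:32} was set up to guarantee.
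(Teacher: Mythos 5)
There is a genuine gap, and it is conceptual rather than bookkeeping. The operator identity you propose as the ``cleanest route'', $\xi_{\ts c}\,\ad_X=(\ad_X+\ep_c(X))\,\xi_{\ts c}$, is false: using exactly the relations you record ($X\ts E_c^{\,n}=E_c^{\,n}(X+n\,\ep_c(X))$, $\ad_X\ad_{F_c}^{\,n}=\ad_{F_c}^{\,n}(\ad_X-n\,\ep_c(X))$, $[\ts X,H_c\ts]=0$) one finds
$\ad_X\bigl(E_c^{\,n}\ad_{F_c}^{\,n}(Y)\bigr)
= n\,\ep_c(X)\,E_c^{\,n}\ad_{F_c}^{\,n}(Y)+E_c^{\,n}\ad_{F_c}^{\,n}(\ad_XY)-n\,\ep_c(X)\,E_c^{\,n}\ad_{F_c}^{\,n}(Y)
= E_c^{\,n}\ad_{F_c}^{\,n}(\ad_XY)$,
so the two shifts cancel and $\xi_{\ts c}$ commutes with $\ad_X$ \emph{exactly}, with no $\ep_c(X)$ term. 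Your scheme therefore cannot produce the shift asserted in \eqref{q11}--\eqref{q12}; worse, your auxiliary claim $\xi_{\ts c}(YX)-\xi_{\ts c}(Y)X\in\Jb$ directly contradicts \eqref{q12} whenever $\ep_c(X)\neq0$. The justification you give for discarding terms into $\Jb$ also rests on a misreading of the triangular decomposition: $E_c=E_{\ts c,c+1}\in\np$ for $c\ge1$ and $E_0=E_{\ts m1}\ts t\in t\,\sl_m[t]$, so $E_c\in\ntp$ for every $c$, while it is $F_c$ that lies in $\nt$. Consequently the individual $n\ge1$ terms of the series \eqref{q1} are \emph{not} in $\Jb$ (if they were, $\xi_{\ts c}$ would be the identity modulo $\Jb$ and the entire construction would collapse).

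The shift $\ep_c(X)$ actually comes from the commutator $[\ts E_c^{\,n},F_c\ts]=n\,(H_c-n+1)\,E_c^{\,n-1}$ of \eqref{ad}, which is missing from your list of inputs. For example, for $X=H_c$ one has $\ad_{F_c}^{\,n}(H_cY)=H_c\ad_{F_c}^{\,n}(Y)+2\ts n\,F_c\ad_{F_c}^{\,n-1}(Y)$; after moving $H_c$ and $F_c$ to the left of $E_c^{\,n}$, the term beginning with $F_c$ falls into $\Jb$ (this is the only place $\Jb$ is used), while the term produced by $[\ts E_c^{\,n},F_c\ts]$ cancels the last factor of the denominator $H_c^{\ts(n)}$ and re-indexes from $n$ to $n-1$; summing the series, the $-2n$ coming from $[\ts E_c^{\,n},H_c\ts]$ and the $+2(n+1)$ coming from the re-indexed term combine to the constant $+2=\ep_c(H_c)$, giving $\xi_{\ts c}(H_cY)\equiv(H_c+2)\,\xi_{\ts c}(Y)$. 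The paper proceeds exactly this way (quoting the computation from [KN1, Proposition~3.1]), after first reducing by linearity to the two cases $X=H_c$ and $\ep_c(X)=0$, the latter being immediate since such $X$ commutes with $E_c$ and $F_c$. Your argument would need to be replaced by this re-indexing computation; the preliminary identities you state are correct but do not suffice.
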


\noindent\emph{Proof.}
It suffices to verify the properties \eqref{q11} and \eqref{q12}
only for $X=H_c$ and for all $X\in\hh$ such that $\ep_{\ts c}(X)=0\,$.
In the latter case we have the relations 
$[\ts E_c\,,X\ts]=[\ts F_c\,,X\ts]=0$ in $\slhat_m\,$.
Then $\xi_{\ts c}\ts(\ts X\ts Y\ts)=X\,\xi_{\ts c}\ts(Y)$
and $\xi_{\ts c}\ts(\ts Y X\ts)=\xi_{\ts c}\ts(Y)\,X$
by \eqref{q1}. Hence we get \eqref{q11} and \eqref{q12}.

For $X=H_c$ the proof of \eqref{q11}
is based on the following commutation relations in the subalgebra of 
$\Ur(\slhat_m\ts)$ generated by the three elements 
$E_{\ts c}\,,F_{\ts c}\,,H_c\,$:
for any $n=1,2,\ts\ldots$
\begin{equation}
\label{ad}
[\,E_{\ts c}^{\,n}\ts,H_c\,]=
-\,2\,n\,E_{\ts c}^{\,n}
\quad\text{and}\quad
[\,E_{\ts c}^{\,n}\ts,F_c\,\ts]=
n\,(H_c-n+1)\,E_{\ts c}^{\,n-1}\ts.
\end{equation}

Let us use the symbol $\ts\,\equiv\,$ to indicate equalities in 
$\Bb$ modulo the subspace $\Jb\ts$.
By~\eqref{ad}, 
for any element $Y\in\Br$ we get
$$
\xi_{\ts c}\,(\ts H_c\,Y)
\,\equiv\,(\ts H_c+2\,)\,\ts\xi_{\ts c}\,(\ts Y)
\,=\,(\ts H_c+\ep_c(H_c))\,\ts\xi_{\ts c}\,(\ts Y)\,.
$$
Here the relation $\ts\,\equiv\,$ is obtained as
in the beginning of  
the proof of \cite[Proposition~3.1]{KN1}. By following another calculation,
as given in the end of the proof of \cite[Proposition~3.1]{KN1}, we get
\begin{equation*}
\xi_{\ts c}\,(\,YH_c\ts)
\,\equiv\,\xi_{\ts c}\,(\ts Y)\ts(\ts H_c+2\,)
\,=\,\xi_{\ts c}\,(\ts Y)\ts(\ts H_c+\ep_c(H_c))\,.
\eqno{\square}
\end{equation*}


\subsection{}
\label{sec:34}

\smallskip\medskip
The property \eqref{q11} allows us to define 
a linear map 
$
\bar\xi_{\ts c}:\Bb\to\Bb\ts/\ts\Jb
$
by setting
$$
\bar\xi_{\ts c}\ts(\,YA\,)=\xi_{\ts c}\ts(\ts Y)\,Z\ts+\ts\Jb
\quad\ \text{for}\quad
A\in\Hb
\quad\text{and}\quad
Y\in\Br
$$
where the element $Z\in\,\Hb$ is obtained from $A$
by regarding it as a rational function on 
the dual vector space $\hh^{\ts\ast}\ts$,
and then adding $\ep_c$ to the argument of that rational function.
Recall that in the end of Subsection \ref{sec:32} we defined an action of
the extended affine Weyl group $\R_m$ on the vector space $\Bb\ts$. 
For any index $c=0\ts,1\lcd m-1$ 
consider the image~$\tau_c\ts(\ts\Jb\ts)\subset\Bb\ts$. 

\begin{prop}
\label{3.2}
We have\/ $\tau_c\ts(\ts\Jb\,)\subset\ker\ts\bar\xi_{\ts c}\,$.
\end{prop}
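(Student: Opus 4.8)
The statement to prove is $\tau_c(\Jb) \subset \ker\bar\xi_c$, i.e. that $\bar\xi_c$ kills the $\tau_c$-twist of the left ideal $\Jb = \nt\,\Bb$. The plan is to reduce this to a computation involving the $\mathfrak{sl}_2$-triple $E_c, F_c, H_c$ and the structure of $\nt$ under the twist by $\tau_c$. First I would recall that $\nt$ is spanned, as a vector space, by $F_c$ together with the root vectors for the remaining positive "roots" of $\slhat_m$ (in the appropriate sense for $\nt$), and that $\tau_c$ permutes the root spaces: concretely $\tau_c(F_c) = -\,E_c$ (up to the sign bookkeeping coming from the $\mathfrak{sl}_2$ relations), while $\tau_c$ sends every other root vector spanning $\nt$ to another root vector still lying in $\nt$. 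So $\tau_c(\Jb)$ is spanned by elements of the form $E_c\,\tau_c(Y)$ and $X'\,\tau_c(Y)$ with $X'$ running over those other root vectors (each of which lies in $\nt$, hence acts into $\Jb$), for $Y \in \Bb$.

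Next I would split the verification into the two corresponding cases. For a generator $X'$ of $\nt$ with $\tau_c(X') \in \nt$ still, the element $\tau_c(X'\,Z) = \tau_c(X')\,\tau_c(Z) \in \nt\,\Bb = \Jb$, so we only need $\bar\xi_c(\Jb) \subset \Jb/\Jb = 0$ — but $\bar\xi_c$ is built from $\xi_c$, and by the defining formula \eqref{q1} together with the relation $[E_c^n, F_c]$ in \eqref{ad}, one checks directly that $\xi_c(F_c\,Y) \equiv 0$, hence $\bar\xi_c$ annihilates $F_c\,\Br$ and, using \eqref{q11}–\eqref{q12} to move scalars past, annihilates all of $\Jb$. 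This is the routine part. The genuinely substantive case is $E_c\,\tau_c(Y)$: here I would compute $\xi_c(E_c\,Y)$ using \eqref{q1} and the commutator $[E_c^n, \cdot]$ with $E_c$ — which is trivial — so that the adjoint action $\ad_{F_c}^n$ has to be pushed past a left multiplication by $E_c$. The key identity is that $\ad_{F_c}^n(E_c\,Y) = \sum_k \binom{n}{k}\,\ad_{F_c}^k(E_c)\,\ad_{F_c}^{n-k}(Y)$, and $\ad_{F_c}(E_c) = -H_c$, $\ad_{F_c}^2(E_c) = 0$ (again in the relevant $\mathfrak{sl}_2$), so only $k=0,1$ survive; assembling the two resulting series and using $\sum_n (n!\,H_c^{(n)})^{-1} E_c^n \cdot(\text{stuff})$ telescopes to show $\xi_c(E_c\,Y) \in \Jb$.

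The main obstacle I anticipate is the bookkeeping around the denominators: $\bar\xi_c$ lands in $\Bb/\Jb$ where $\Bb$ involves the localized enveloping algebra $\Ub(\slhat_m)$, and the factors $(n!\,H_c^{(n)})^{-1}$ in \eqref{q1} must be handled carefully when $Y$ already carries a weight — one has to track how $\ad$ by $H_c$ acts and invoke the weight-shift rule defining the left $\Ub(\slhat_m)$-action on $\Bb$ from Subsection~\ref{sec:32}. Precisely, after applying $\tau_c$ the argument of any rational function in $\Hb$ gets shifted, and the composition with the $\ep_c$-shift built into $\bar\xi_c$ has to be reconciled; Proposition~\ref{3.1} is exactly the tool for that, so I would lean on \eqref{q11} and \eqref{q12} repeatedly to commute the Cartan-valued rational coefficients to one side. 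A secondary subtlety is the infinite-dimensional nature of $\nt$: unlike the finite-dimensional reductive case of \cite{Z}, $\nt$ contains $t^{-1}\mathfrak{sl}_m[t^{-1}]$, so I would note that $\tau_c$ still preserves $\nt$ (this is where the explicit action formulas for $\tau_0$ and $\tau_1,\dots,\tau_{m-1}$ from Subsection~\ref{sec:24} are used) and that the argument above is uniform in the choice of spanning root vector, so no new phenomenon arises from the affine direction beyond what is already handled by the level-shift in the $C$-coefficient.
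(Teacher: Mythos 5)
Your decomposition of $\tau_c\ts(\ts\Jb\ts)$ into $E_{\ts c}\ts\Bb$ plus the span of $\tau_c(X')\,\Bb$ for the remaining root vectors $X'$ of $\nt$ is the paper's starting point too (and the sign is $\tau_c(F_c)=+\ts E_{\ts c}$), and you correctly single out $\xi_{\ts c}(E_{\ts c}\ts Y)\in\Jb$ as the substantive computation. But your treatment of the first family of elements has a genuine gap: you reduce it to the claim that $\bar\xi_{\ts c}$ annihilates all of $\Jb$, based on the assertion $\xi_{\ts c}(F_c\ts Y)\equiv0$ modulo $\Jb$, and both are false. Since $F_c\ts Y=\ad_{\ts F_c}(Y)+Y F_c$ and \eqref{ad} gives $\xi_{\ts c}(\ts Y F_c)\equiv0$ (this is the right-multiplication statement needed for Proposition \ref{3.5}, not \ref{3.2}), one finds $\xi_{\ts c}(F_c\ts Y)\equiv\xi_{\ts c}(\ad_{\ts F_c}(Y))$, which is generically nonzero: taking $Y=P\ot E_{\ts c}$ with $F_c\ts P=0$ gives $\xi_{\ts c}(F_c\ts Y)\equiv-\ts P\ot(H_c+2)\notin\Jb$. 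This is as it must be --- if $\bar\xi_{\ts c}$ killed $\Jb$ there would be no need to precompose with $\tau_c$ in the definition of $\eta_{\,c}$. Moreover, even granting that $\bar\xi_{\ts c}$ killed $F_c\ts\Br$, ``moving scalars past'' via \eqref{q11}--\eqref{q12} only multiplies by elements of $\Hb$ and cannot generate the other root vectors of the infinite-dimensional algebra $\nt$ from $F_c$. The elements you actually need to kill lie in $\nt_{\ts c}\ts\Bb$, where $\nt_{\ts c}$ is the span of all root vectors of $\nt$ except $F_c$, and the missing ingredient --- the key point of the paper's proof --- is that $\nt_{\ts c}$ is stable under $\ad_{\ts E_c}$, $\ad_{\ts F_c}$ and $\ad_{\ts H_c}$ (checked from \eqref{hatcom}); by the Leibniz rule every term of \eqref{q1} applied to $X\ts Y$ with $X\in\nt_{\ts c}$ then lies in $\nt\ts\Bb=\Jb$ after commuting $E_{\ts c}^{\,n}$ to the right of the resulting elements of $\nt_{\ts c}$.

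There is also a computational error in your sketch for $\xi_{\ts c}(E_{\ts c}\ts Y)$: one has $\ad_{\ts F_c}(E_{\ts c})=-H_c$ and $\ad_{\ts F_c}^{\,2}(E_{\ts c})=-2\ts F_c\neq0$; it is $\ad_{\ts F_c}^{\,3}(E_{\ts c})$ that vanishes. The $k=2$ term of the Leibniz expansion of $\ad_{\ts F_c}^{\,n}(E_{\ts c}\ts Y)$ is not negligible: commuting $E_{\ts c}^{\,n}$ past the resulting $F_c$ via \eqref{ad} produces, besides a term in $F_c\ts\Bb$, a contribution proportional to $E_{\ts c}^{\,n-1}\ad_{\ts F_c}^{\,n-2}(Y)$, and it is exactly this contribution that makes the series telescope: the coefficient of $E_{\ts c}^{\,j}\ad_{\ts F_c}^{\,j-1}(Y)$ becomes $\bigl((j-1)\ts!\,H_c^{\ts(j)}\bigr)^{-1}\bigl((H_c-j+1)-(H_c-2j)-(j+1)\bigr)=0$. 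Keeping only $k=0,1$ leaves a nonzero residue, so the argument as written does not close; with the $k=2$ term restored it reproduces the calculation the paper cites from \cite{KN1}.
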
 

\begin{proof}
Note that $\tau_c\ts(F_c)=E_{\ts c}\,$.
If $c>0$ then let $\nt_{\ts c}$ be the subspace of $\slhat_m$ spanned
by all the elements $E_{\ts ab}\,t^{\,i}$ where $i<0\,$, and by
those elements $E_{\ts ab}$ where $a>b$ but $(a\ts,b)\neq(c+1\ts,c)\,$. 
Further, let $\nt_{\,0}$ be the subspace of $\slhat_m$ spanned
by the elements $E_{\ts ab}$ where $a>b\,$, and by those elements
$E_{\ts ab}\,t^{\,i}$ where $i<0$ but $(a\ts,b\ts,i)\neq(1\ts,m\ts,-1)\,$.
Then for any $c=0\ts,1\lcd m-1$ the image
$\tau_c\ts(\ts\Jb\,)\subset\Bb$ is spanned by the subspaces
$\nt_{\ts c}\ts\Bb$ and $E_{\ts c}\ts\Bb\ts$.

By using the relations \eqref{hatcom} one can check
that that the subspace $\nt_{\ts c}\subset\slhat_m$ 
is preserved by 
the adjoint action of the elements $E_c\,,F_c\,,H_c\,$. 
So we have $\xi_{\ts c}\ts(X\ts Y)\in\Jb$ for 
any $X\in\nt_c$ and any $Y\in\Br\ts$, see \eqref{q1}. 
To prove Proposition \ref{3.2} it remains to show that 
$\xi_{\ts c}\ts(\ts E_{\ts c}\,Y\ts )\in\Jb$ for any $Y\in\Br\ts$.
By using the relations \eqref{ad},
this can be shown by the same calculation as in the proof
of \cite[Proposition 3.2]{KN1}.
\end{proof}

Proposition \ref{3.2} allows us to define for $c=0\ts,1\lcd m-1$ 
a linear map
$
\,\eta_{\,c}:\,\Bb\ts/\ts\Jb\,\to\Bb\ts/\ts\Jb
$
as the composition $\bar\xi_{\ts c}\,\tau_c$ 
applied to the elements of $\Bb$ which are
taken modulo the subspace $\Jb\ts$. 
This definition also goes back to \cite{Z},
and we will call 
$$
\eta_{\,0},\eta_{\,1}\lcd\eta_{\,m-1}
$$
the \textit{Zhelobenko operators} on $\Bb\ts/\ts\Jb\,$.
The next proposition states their key property,
for its proof see \cite[Section 6]{KO}.
Like in the beginning of Subsection~\ref{sec:24}, 
here we will let the indices $c$ of the operators
$\eta_{\,c}$ run through $\ZZ\,$, assuming that 
$\eta_{\,c+m}=\eta_{\,c}$ for all $c\in\ZZ\,$.

\begin{prop}
\label{3.3}
The operators\/ $\eta_{\,0},\eta_{\,1}\lcd\eta_{\,m-1}$
on\/ $\Bb\ts/\ts\Jb$ satisfy the affine braid relations
$$
\eta_{\,c}\,\eta_{\,c+1}\,\eta_{\,c}=
\eta_{\,c+1}\,\eta_{\,c}\,\eta_{\,c+1}\,;
\quad
\eta_{\,c}\,\eta_{\,d}=\eta_{\,d}\,\eta_{\,c}
\quad\text{for}\quad c-d\neq\pm\ts1\ \text{\rm mod}\ m\,.
$$
\end{prop}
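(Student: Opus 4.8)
The relations to be established are formal consequences of Propositions \ref{3.1} and \ref{3.2}, together with a short list of identities relating the maps $\xi_{\ts c}$ and $\xi_{\ts c+1}$ attached to neighbouring indices; the latter identities are precisely the ones proved for reductive Lie algebras in \cite{Z} and \cite[Section 6]{KO}. So the plan is to reduce the affine braid and commutation relations to statements that take place inside rank $\le 2$ subalgebras of $\slhat_m\ts$, and then invoke the reductive case.

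First I would treat $\eta_{\,c}\ts\eta_{\,d}=\eta_{\,d}\ts\eta_{\,c}$ for $c-d\neq\pm\ts1\ \text{\rm mod}\ m\ts$. In this case the coroots $H_c$ and $H_d$ are orthogonal, so in $\slhat_m$ every bracket among $E_c,F_c$ and $E_d,F_d$ vanishes, the automorphism $\tau_c$ fixes each of $E_d,F_d,H_d$ and conversely, $\tau_c$ and $\tau_d$ commute in $\R_m\ts$, and $\tau_c(\ep_d)=\ep_d\ts$, $\tau_d(\ep_c)=\ep_c\ts$. Reading this off the defining formula \eqref{q1}, the $c$- and $d$-corrections to the identity commute with one another and with the action of $\tau_d$ respectively $\tau_c\ts$, and the same holds for the shifts by $\ep_c$ and $\ep_d$ built into $\bar\xi_{\ts c}$ and $\bar\xi_{\ts d}\ts$. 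Since by Proposition \ref{3.2} both compositions descend to $\Bb\ts/\ts\Jb\ts$, the commutation relation follows.

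For the braid relation $\eta_{\,c}\ts\eta_{\,c+1}\ts\eta_{\,c}=\eta_{\,c+1}\ts\eta_{\,c}\ts\eta_{\,c+1}$ the Weyl-group braid identity $\tau_c\ts\tau_{c+1}\ts\tau_c=\tau_{c+1}\ts\tau_c\ts\tau_{c+1}$ from Subsection \ref{sec:24} already holds among the automorphisms of $\slhat_m\ts$, so the content of the statement lies entirely in the way the maps $\bar\xi$ interleave with these automorphisms. That interleaving is controlled by the defining formula \eqref{q1}, by the adjunction identities \eqref{q11} and \eqref{q12} of Proposition \ref{3.1}, and by the action of $\R_m$ on the denominators \eqref{denset} noted at the end of Subsection \ref{sec:32}; all of these involve only the subalgebra of $\slhat_m$ generated by the two adjacent triples $(E_c,F_c,H_c)$ and $(E_{c+1},F_{c+1},H_{c+1})$ — for $m\ge3$ a copy of $\sl_3$ with Cartan subalgebra $\langle H_c,H_{c+1}\rangle$ and simple roots $\ep_c,\ep_{c+1}$ — together with the copy of $\Sym_3$ generated by $\tau_c$ and $\tau_{c+1}$ inside $\R_m\ts$. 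Hence the identity reduces to the $\sl_3\ts$-case of the braid relation for Zhelobenko operators, which is part of \cite[Section 6]{KO}.

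The step I expect to be the main obstacle is making this reduction to rank $\le 2$ rigorous in the present localized and infinite-dimensional setting: one must check, using a Poincar\'e--Birkhoff--Witt basis of $\Ur(\ts\slhat_m\ts)$ adapted to the decomposition $\slhat_m=\nt\op\hh\op\ntp\ts$, that the (finite) correction sum in \eqref{q1}, the $\ep_c$-shift in $\bar\xi_{\ts c}\ts$ and the localization at the set \eqref{denset} are all ``local'' with respect to the $c$-th $\sl_2\ts$-triple, and that applying $\tau_c$ or $\tau_{c+1}$ to elements of $\Jb\ts$ — or of the auxiliary spaces $\nt_{\ts c}\ts\Bb$ and $E_{\ts c}\ts\Bb$ from the proof of Proposition \ref{3.2} — produces no terms lying outside the span on which the rank-two identities were verified. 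Once this bookkeeping is carried out, the affine braid and commutation relations follow formally, exactly as in the reductive case.
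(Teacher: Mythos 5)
Your proposal is consistent with the paper, which itself gives no argument for Proposition \ref{3.3} beyond the citation to \cite[Section 6]{KO}: reducing the commutation relations to the orthogonality of non-adjacent $\mathfrak{sl}_{\ts2}\ts$-triples and the braid relation to the rank-two ($\sl_3$) case is exactly the route that reference takes, and your use of Propositions \ref{3.1} and \ref{3.2} to make the compositions well defined on $\Bb\ts/\ts\Jb$ matches the setup here. The bookkeeping you flag as the main obstacle — locality of \eqref{q1}, of the $\ep_c\ts$-shift and of the localization \eqref{denset} with respect to the relevant rank-two subalgebra — is indeed where all the work lies, and it is precisely what the cited Section 6 of \cite{KO} supplies.
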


\begin{cor}
\label{3.4}
For any reduced decomposition $\tau=\tau_{\ts c}\ts\ldots\ts\tau_{\ts d}$ 
in\/ $\T_m$ the composition
$\eta_{\,c}\ts\ldots\ts\eta_{\,d}$
of operators on\/ $\Bb\ts/\ts\Jb$ 
does not depend on the choice of the decomposition of\/ $\tau\ts$.
\end{cor}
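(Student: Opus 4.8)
The plan is to deduce Corollary~\ref{3.4} from Proposition~\ref{3.3} by a standard argument about braid relations, the same one that shows any reduced word for an element of a Coxeter group can be transformed into any other by a sequence of braid moves. First I would recall Matsumoto's theorem (or Tits' lemma): in the affine Weyl group $\T_m$, if $\tau_{\ts c}\ts\ldots\ts\tau_{\ts d}$ and $\tau_{\ts c'}\ts\ldots\ts\tau_{\ts d'}$ are two reduced decompositions of the same element $\tau\in\T_m$, then one can pass from the first word to the second by a finite sequence of \emph{braid moves}, i.e.\ replacements of a consecutive subword $\tau_{\ts a}\tau_{\ts b}\tau_{\ts a}$ by $\tau_{\ts b}\tau_{\ts a}\tau_{\ts b}$ when $a-b\equiv\pm1\bmod m$, and of $\tau_{\ts a}\tau_{\ts b}$ by $\tau_{\ts b}\tau_{\ts a}$ when $a-b\not\equiv\pm1\bmod m$. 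The crucial point is that no move of the form $\tau_{\ts a}\tau_{\ts a}\to 1$ is ever needed, precisely because both words are reduced and of the same (minimal) length.

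The second step is to transport each such braid move to the operators $\eta_{\,c}$. Since the assignment $\tau_{\ts c}\mapsto\eta_{\,c}$ together with Proposition~\ref{3.3} guarantees that $\eta_{\,a}\eta_{\,b}\eta_{\,a}=\eta_{\,b}\eta_{\,a}\eta_{\,b}$ whenever $a-b\equiv\pm1\bmod m$, and $\eta_{\,a}\eta_{\,b}=\eta_{\,b}\eta_{\,a}$ otherwise, applying these identities to the appropriate consecutive factors in the composition $\eta_{\,c}\ts\ldots\ts\eta_{\,d}$ realizes exactly the braid moves from the first step. Thus, following the fixed sequence of braid moves that carries the word $\tau_{\ts c}\ts\ldots\ts\tau_{\ts d}$ to $\tau_{\ts c'}\ts\ldots\ts\tau_{\ts d'}$, one obtains a corresponding chain of equalities of operators on $\Bb\ts/\ts\Jb$ that carries $\eta_{\,c}\ts\ldots\ts\eta_{\,d}$ to $\eta_{\,c'}\ts\ldots\ts\eta_{\,d'}$. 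Hence the composition depends only on $\tau$, not on the chosen reduced decomposition.

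The only genuine subtlety — and the step I would be most careful about — is the appeal to Matsumoto/Tits: one must use the formulation guaranteeing that reduced words are connected \emph{through reduced words} by braid moves alone (never needing a relation $\tau_{\ts a}^{\ts2}=1$), which is exactly what holds in a Coxeter group and is what makes the $\eta_{\,c}$ relations (which, unlike the $\tau_c$, are not assumed to satisfy $\eta_{\,c}^{\ts2}=1$) sufficient. Everything else is formal. I would therefore present the proof as: invoke Matsumoto's theorem to get a chain of braid moves between the two reduced words; apply the matching braid relations from Proposition~\ref{3.3} term by term; conclude equality of the two operator compositions.

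\begin{proof}
By Matsumoto's theorem, any two reduced decompositions of the same element
$\tau\in\T_m$ can be connected by a finite sequence of braid moves,
each replacing a consecutive subword $\tau_{\ts a}\,\tau_{\ts b}\,\tau_{\ts a}$ by
$\tau_{\ts b}\,\tau_{\ts a}\,\tau_{\ts b}$ when $a-d\equiv\pm1\bmod m$, or a
consecutive subword $\tau_{\ts a}\,\tau_{\ts b}$ by $\tau_{\ts b}\,\tau_{\ts a}$
when $a-b\not\equiv\pm1\bmod m$; in particular no move of the form
$\tau_{\ts a}\,\tau_{\ts a}\to1$ is required, since all intermediate words are
reduced and of the same length. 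By Proposition \ref{3.3} the operators
$\eta_{\,0},\eta_{\,1}\lcd\eta_{\,m-1}$ satisfy the corresponding
relations, so applying them to the matching consecutive factors of the
composition $\eta_{\,c}\ts\ldots\ts\eta_{\,d}$ realizes each braid move as an
equality of operators on $\Bb\ts/\ts\Jb\,$. Following the whole sequence of
braid moves thus yields a chain of equalities transforming
$\eta_{\,c}\ts\ldots\ts\eta_{\,d}$ into the composition built from any other
reduced decomposition of $\tau\,$. Hence $\eta_{\,c}\ts\ldots\ts\eta_{\,d}$
depends only on $\tau\,$.
\end{proof}
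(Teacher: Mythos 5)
Your argument is correct and is exactly the deduction the paper intends: Corollary \ref{3.4} is stated as an immediate consequence of Proposition \ref{3.3}, and the only substantive input is Matsumoto's theorem, which you invoke in the right form (braid moves only, through reduced words, never the quadratic relation $\tau_a^{\ts2}=1$ that the $\eta_{\,a}$ need not satisfy). The only blemish is the typo ``$a-d\equiv\pm1\bmod m$'' in the displayed proof, which should read ``$a-b\equiv\pm1\bmod m$''.
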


By the definition given in Subsection \ref{sec:24},
the action of the element 
$\pi\in\R_m$ on the Lie algebra~$\slhat_m$ maps 
$E_c\,,F_c\,,H_c\,$ respectively to 
$E_{\ts c+1}\,,F_{\ts c+1}\,,H_{\ts c+1}\,$.
If the index $c+1$ here
is $m\ts$, it should be then replaced by $0\,$. 
Furthermore, the action of the element $\pi$ on $\slhat_m$ 
preserves the subalgebra $\nt\,$.
Hence the action of $\pi$ on $\Bb$
determines its action on the quotient $\Bb\ts/\ts\Jb\,$.
It now follows from the definition \eqref{q1} that 
on $\Bb\ts/\ts\Jb$ we have
\begin{equation}
\label{pieta}
\pi\,\eta_{\,c}=\eta_{\,c+1}\,\pi\,.
\end{equation}


\subsection{}
\label{sec:35}

Using the right action of the Lie subalgebra $\ntp\subset\slhat_m\,$,
introduce the vector subspaces
$$
\Jpr=\Br\,\ts\ntp\ts\subset\ts\Br
\quad\text{and}\quad
\Jpb=\Bb\,\ts\ntp\ts\subset\ts\Bb\ts.
$$
For any $c=0\ts,1\lcd m-1$
consider the image $\tau_c\ts(\ts\Jpb\ts)\subset\Bb\ts$.

\begin{prop}
\label{3.5}
We have\/ 
$\bar\xi_{\ts c}\ts(\,\tau_c\ts(\Jpb\ts))\subset\Jb+\Jpb\,$.
\end{prop}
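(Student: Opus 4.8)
The plan is to reduce Proposition~\ref{3.5} to two inclusions for the map $\xi_{\ts c}:\Br\to\Bb$ of \eqref{q1}, and then to verify these by the same sort of $\mathfrak{sl}_{\ts2}\ts$-calculation as in \cite{KN1}.

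First I would rewrite the left hand side. Since $\tau_c\in\R_m$ acts as an algebra automorphism on the tensor factor $\Ub(\ts\slhat_m\ts)$ of $\Bb$ and maps the vector space $\Bb$ bijectively onto itself, one has $\tau_c(\ts V\,X\ts)=\tau_c(V)\,\tau_c(X)$ for $V\in\Bb$ and $X\in\ntp\ts$, hence $\tau_c(\Jpb)=\Bb\cdot\tau_c(\ntp)\ts$. Now $\tau_c$ is the simple reflection $s_c$ of the affine Weyl group, so the explicit action of $\R_m$ on $\slhat_m$ described in Subsection~\ref{sec:24} (including the formula for $\tau_{\,0}\ts$, whose central correction terms play no role in the bookkeeping of root subspaces) shows that $\tau_c(E_c)=F_c$ while $\tau_c$ permutes the remaining positive root subspaces of $\slhat_m$ among themselves. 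Therefore $\tau_c(\ntp)=\CC\,F_c\op\mathfrak{m}_c\ts$, where $\mathfrak{m}_c:=\ntp\cap\tau_c(\ntp)$ is the span of all positive root subspaces other than $\CC\,E_c$; in particular $\mathfrak{m}_c$ is a $\tau_c\ts$-stable Lie subalgebra, $F_c\in\nt\ts$, and $\tau_c(\Jpb)=\Bb\,F_c+\Bb\,\mathfrak{m}_c\ts$. Writing an arbitrary element of $\Bb$ as $Y\,A$ with $Y\in\Br$ and $A\in\Hb\ts$, commuting $A$ past the $\hh\ts$-weight vectors $F_c$ or $X\in\mathfrak{m}_c$ (which keeps it in $\Hb$), and using the definition of $\bar\xi_{\ts c}$ together with $\Jb\,\Hb\subset\Jb$ and $\Jpb\,\Hb\subset\Jpb\ts$, the proposition reduces to the two inclusions
$$
\xi_{\ts c}(\ts\Br\,F_c\ts)\subset\Jb
\qquad\text{and}\qquad
\xi_{\ts c}(\ts\Br\,\mathfrak{m}_c\ts)\subset\Jpb\ts.
$$

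For the second inclusion I would use that $E_c$ spans the subspace of a simple root of $\slhat_m\ts$: subtracting that root from any other positive root yields either zero or again a positive root distinct from it, so $\ad_{\ts F_c}(\mathfrak{m}_c)\subset\mathfrak{m}_c\ts$. Because $\ad_{\ts F_c}$ is a derivation for the right multiplication by $\slhat_m\ts$, for $Y\in\Br$ and $X\in\mathfrak{m}_c$ every summand of $\ad_{\ts F_c}^{\ts n}(\ts YX\ts)$ lies in $\Br\cdot\mathfrak{m}_c\subset\Jpr\subset\Jpb\ts$; since $\Jpb$ is a left $\Ub(\ts\slhat_m\ts)$-submodule of $\Bb\ts$, applying $(\ts n\ts!\,H_c^{\ts(n)}\ts)^{-1}E_c^{\,n}$ keeps every term of \eqref{q1} inside $\Jpb\ts$, whence $\xi_{\ts c}(\ts YX\ts)\in\Jpb\ts$.

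For the first inclusion, $\ad_{\ts F_c}(F_c)=0$ forces $\ad_{\ts F_c}^{\ts n}(\ts YF_c\ts)=\ad_{\ts F_c}^{\ts n}(\ts Y\ts)\,F_c\ts$, so that $\xi_{\ts c}(\ts YF_c\ts)=\xi_{\ts c}(\ts Y\ts)\,F_c=F_c\,\xi_{\ts c}(\ts Y\ts)-\ad_{\ts F_c}(\xi_{\ts c}(\ts Y\ts))\ts$. The term $F_c\,\xi_{\ts c}(\ts Y\ts)$ lies in $\nt\,\Bb=\Jb\ts$, so it remains to show $\ad_{\ts F_c}(\xi_{\ts c}(\ts Y\ts))\in\Jb\ts$. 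This is the one genuinely computational step, and I expect it to be the main obstacle. Using the relations \eqref{ad} one obtains
$$
[\,F_c\,,\,(\ts n\ts!\,H_c^{\ts(n)}\ts)^{-1}E_c^{\,n}\,]\,=\,\psi_n(H_c)\,F_c\,E_c^{\,n}\,-\,(\ts(n-1)\ts!\,H_c^{\ts(n-1)}\ts)^{-1}E_c^{\,n-1}
$$
for a rational function $\psi_n$ of $H_c\ts$; substituting this into the expansion of $\ad_{\ts F_c}(\xi_{\ts c}(\ts Y\ts))$ coming from \eqref{q1}, the $\psi_n(H_c)\,F_c\,E_c^{\,n}$-contributions lie in $\Jb$ (because $F_c$ acts on the left, and left multiplication by elements of $\Hb$ preserves $\nt\,\Bb$), while the $-(\ts(n-1)\ts!\,H_c^{\ts(n-1)}\ts)^{-1}E_c^{\,n-1}$-contributions telescope against the remaining terms and cancel the leading term $\ad_{\ts F_c}(\ts Y\ts)\ts$, so that $\ad_{\ts F_c}(\xi_{\ts c}(\ts Y\ts))\equiv0$ modulo $\Jb\ts$. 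The relations \eqref{ad} hold verbatim for the affine triple $(E_{\ts0},F_{\ts0},H_{\ts0})\ts$, so $c=0$ needs no separate treatment; this is the affine analogue of the computation in \cite{KN1}. The delicate point throughout is to keep track of which of the module structures on $\Bb$ each factor respects — left multiplication by $\Hb$ preserves $\Jb$ and $\Jpb\ts$, left multiplication by $\Ub(\ts\slhat_m\ts)$ preserves $\Jpb\ts$, and right multiplication by $\Hb$ preserves both — because it is exactly this that makes the telescoping in the first inclusion land in $\Jb$ rather than merely in $\Jb+\Jpb\ts$.
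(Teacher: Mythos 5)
Your proposal is correct and follows essentially the same route as the paper: the same decomposition $\tau_c(\Jpb)=\Bb\,F_c+\Bb\,\ntp_c$ with $\ntp_c=\ntp\cap\tau_c(\ntp)$ stable under $\ad_{F_c}$ (giving the $\Jpb$ half), and the same key identity $\xi_{\ts c}(\ts YF_c\ts)=\xi_{\ts c}(\ts Y)\ts F_c$ followed by the check that this lies in $\Jb$. The only difference is that you carry out the final $\mathfrak{sl}_{\ts2}$-commutator cancellation explicitly, whereas the paper delegates it to the analogous calculation in \cite{KN1}; your version of that cancellation is correct.
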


\begin{proof}
Note that $\tau_c\ts(E_{\ts c})=F_c\,$.
If $c>0$ then let $\ntp_{\ts c}$ be the subspace of $\slhat_m$ spanned
by all the elements $E_{\ts ab}\,t^{\,i}$ where $i>0\,$, and by
those elements $E_{\ts ab}$ where $a<b$ but $(a\ts,b)\neq(c\ts,c+1)\,$. 
Further, let $\ntp_{\,0}$ be the subspace of $\slhat_m$ spanned
by all the elements $E_{\ts ab}$ where $a<b\,$, and by those elements
$E_{\ts ab}\,t^{\,i}$ where $i>0$ but $(a\ts,b\ts,i)\neq(m\ts,1\ts,1)\,$.
Then for any $c=0\ts,1\lcd m-1$ the image
$\tau_c\ts(\ts\Jpb\,)\subset\Bb$ is spanned by the subspaces
$\Bb\,\ntp_{\ts c}$ and $\Bb\ts F_c\,$.

By using the relations \eqref{hatcom} one can check
that that the subspace $\ntp_{\ts c}\subset\slhat_m$ 
is preserved by 
the adjoint action of the element $F_c\,$. 
Hence we have $\xi_{\ts c}\ts(X\ts Y)\in\Jpb$ for 
any $X\in\ntp_c$ and any $Y\in\Br\ts$, see the definition \eqref{q1}.
Further, note that 
$\xi_{\ts c}\ts(\ts YF_c\ts)=\xi_{\ts c}\ts(\ts Y)\,F_c$
for any $Y\in\Br\,$, because 
$\ad_{\ts F_c}(\ts YF_c\ts)=\ad_{\ts F_c}(\ts Y)\,F_c\,$.
The proof of Proposition~\ref{3.5}
can be now completed by showing that here
$\xi_{\ts c}\ts(\ts Y)\ts F_c\in\Jb\,$. 
By using \eqref{ad},
the latter inclusion is obtained by the same calculation as in the proof
of \cite[Proposition 3.5]{KN1}.
\end{proof}

Proposition \ref{3.5} implies that for every $c=0\ts,1\lcd m-1$
the Zhelobenko operator $\eta_{\,c}$ determines a linear map
$$
\Bb\ts/\ts(\ts\Jb+\Jpb\ts)\,\to\,\Bb\ts/\ts(\ts\Jb+\Jpb\ts)\,.
$$
Recall that $\ell=\ka-m$ by 
an assumption made in Subsection \ref{sec:23}.
Denote by $\Ir$ the subspace
$\Br\,(\ts C-\ell\,)\subset\Br\,$.
Similarly, denote by $\Ib$ the subspace
$\Bb\,(\ts C-\ell\,)\subset\Bb\,$. Because the element $C\in\slhat_m$
is central, the Zhelobenko operator $\eta_{\,c}$ also
determines a linear map
\begin{equation}
\label{BB}
\Bb\ts/\ts(\ts\Jb+\Jpb\ts+\Ib\,)\,\to\,\Bb\ts/\ts(\ts\Jb+\Jpb+\Ib\,)\,.
\end{equation}

Observe that 
the vector space $\Bb\ts/\ts(\ts\Jb+\Jpb+\Ib\,)$
coincides with the space of $\nt\,$-coinvariants
of the $\slhat_m\,$-module \eqref{w} where
the tensor factor $V$ is the \emph{universal} Verma module
of level $\ell\,$. 
Namely,
here $V$ is the quotient of the universal enveloping algebra 
$\Ur(\ts\slhat_m\ts)$ by
the left ideal generated by $\ntp$ and by the element $C-\ell\,$.
This $V$ satisfies the annihilation condition stated
just before \eqref{gaps}. By applying Corollary \ref{2.2} to this 
$\slhat_m\,$-module $V$, 
we define an action of the Cherednik algebra $\C_N$ on  
the quotient vector space $\Bb\ts/\ts(\ts\Jb+\Jpb+\Ib\,)\,$.

Note that the action of the element $\pi$ on 
$\Bb$ also determines a linear map \eqref{BB}.
For $\ka\neq0$ this map does not commute with the 
action of $\C_N\,$, see Corollary \ref{2.5}.
However, we still have the following theorem.  
This theorem is the principal result of the present article.

\begin{theorem}
\label{3.6}
For $c=0\ts,1\lcd m-1$ and\/ $\ell=\ka-m$ the linear map \eqref{BB}
determined by the Zhelobenko operator $\eta_{\,c}$
commutes with the action of the algebra $\C_N\,$.
\end{theorem}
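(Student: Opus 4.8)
\subsection{}

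The plan is to verify that the linear map \eqref{BB} determined by $\eta_{\,c}$ commutes with a generating set of $\C_N\,$: with the symmetric group $\Sym_N\,$, with the ring $\P_N$ of Laurent polynomials, and with the elements $z_1\lcd z_N$ of $\H_N\subset\C_N\,$. By Corollary \ref{2.2} the first two of these act on the quotient $\Bb\ts/\ts(\ts\Jb+\Jpb+\Ib\,)$ only through the tensor factor $\P_N\ot(\CC^{\ts m})^{\ot N}\,$, identified with $N$ copies of $\CC^{\ts m}[\ts t\ts,t^{-1}\ts]\,$: the group $\Sym_N$ permutes these copies simultaneously with the variables, while $x_p$ acts as multiplication by $t$ in the $p\,$-th copy. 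On the other hand $\eta_{\,c}=\bar\xi_{\ts c}\,\tau_c$ is assembled from the diagonal action of $\slhat_m$ on $\Bb\,$, right multiplications in $\Ub(\ts\slhat_m\ts)\,$, rational functions of $H_c\in\hh\,$, and the operator $\tau_c\in\R_m\,$, which acts identically on each of the $N$ copies of $\CC^{\ts m}[\ts t\ts,t^{-1}\ts]$ and by an automorphism of $\slhat_m\,$. Every one of these operations commutes with permuting the $N$ copies and with multiplication by $t$ in one copy, and every one of them preserves $\hh\ts$-weights; hence $\eta_{\,c}$ commutes with $\Sym_N$ and with $\P_N\,$, by the same argument as in the finite-dimensional situation of \cite{KN1}.

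It remains to handle the generators $z_p\,$, which by Corollary \ref{2.2} act on \eqref{BB} as the modified Cherednik operators $\theta_{\,p}$ of \eqref{ups}. First I would reduce to a single value of $c\,$. The element $\pi$ also determines a linear map of \eqref{BB}, on which $\pi\,\eta_{\,c}=\eta_{\,c+1}\,\pi$ by \eqref{pieta} and $\pi\,\theta_{\,p}\,\pi^{-1}=\theta_{\,p}+(\ka/m)\,\id$ by Corollary \ref{2.5}. A short computation with these two relations shows that if $\eta_{\,c}$ commutes with $\theta_{\,p}\,$, then so does $\eta_{\,c+1}=\pi\,\eta_{\,c}\,\pi^{-1}\,$ — the scalar shift produced by moving $\theta_{\,p}$ past $\pi$ is undone by moving it back, and $\eta_{\,c}$ commutes with scalars. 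As the indices of the operators $\eta_{\,c}$ are read modulo $m\,$, it is therefore enough to prove the theorem for one index, and I would take $c=1\,$, for which $E_1=E_{\ts12}\,$, $F_1=E_{\ts21}$ and $H_1=E_{\ts11}-E_{\ts22}$ all lie in the finite part $\sl_m\subset\slhat_m\,$.

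For $c=1$ write $\eta_{\,1}=\bar\xi_{\ts 1}\,\tau_1$ and treat the two factors in turn. Since $E_1\,,F_1\,,H_1$ act in $t\ts$-degree zero, both $\bar\xi_{\ts 1}$ and $\tau_1$ act as the identity on the tensor factor $\P_N$ and commute with permutations of the $N$ copies of $\CC^{\ts m}$ and with the $\hh\ts$-weight grading; consequently the summand $\ka\,x_p\,\d_p$ of \eqref{ups} and the summand built from $\sum_{r\neq p}\frac{x_p}{x_p-x_r}\,(1-\si_{pr})\ot\si_{pr}\ot\id$ — neither of which touches the tensor factor carrying $\Ub(\ts\slhat_m\ts)$ — commute with $\eta_{\,1}$ trivially. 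The summand of \eqref{ups} involving the current-algebra elements $E_{\ts ba}\,t^{\,i}$ is left unchanged by $\tau_1\,$, because the automorphism $\tau_1$ only relabels the summation indices $a,b$ by the transposition of $1$ and $2\,$. So the substance of the proof is the commutation of $\bar\xi_{\ts 1}$ with this last summand, modulo $\Jb$ and compatibly with the further quotients by $\Jpb$ and $\Ib\,$. This I would establish by the same kind of calculation as in \cite[Propositions 3.1 and 3.2]{KN1}: using the relations \eqref{ad} to commute the left multiplications by $E_1^{\,n}$ and the adjoint powers $\ad_{\,F_1}^{\,n}$ appearing in \eqref{q1} past the operators $E_{\ts ab}^{\ts(p)}\ot E_{\ts ba}\,t^{\,i}\,$, evaluating the resulting commutators from \eqref{hatcom}, and checking that each correction term either lies in $\Jb$ or in $\Jpb$ or is absorbed using $C\equiv\ell$ modulo $\Ib\,$; since $[\ts F_1\,,\sl_m\,t^{\,i}\ts]\subset\sl_m\,t^{\,i}\,$, the commutator manipulations respect each $t\ts$-degree.

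I expect this last calculation to be the only genuine obstacle. Two features distinguish it from the finite-dimensional case of \cite{KN1}: the current summand of $\theta_{\,p}$ is an infinite sum over $i\ge0$ with the rational coefficients $x_p^{\,-i}\,$, so the term-by-term manipulation has to be justified — hence one works throughout modulo $\Jpb+\Ib\,$, on which $\theta_{\,p}$ is defined by Corollary \ref{2.2}; and although the commutators respect $t\ts$-degrees, the subspaces $\Jb$ and $\Jpb$ are not degree-homogeneous, so recognising the correction terms as lying in $\Jb+\Jpb+\Ib$ requires tracking them across degrees rather than within a single one. Completing these verifications is the task of Section \ref{sec:4}.
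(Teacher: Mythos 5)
Your overall architecture matches the paper's: commutation with $\Sym_N$ and with $\P_N$ is straightforward, the case $c=0$ is reduced to $c=1$ by writing $\eta_{\,0}=\pi^{-1}\ts\eta_{\,1}\,\pi$ and cancelling the scalar shift $\ka/m$ of Corollary \ref{2.5} against its inverse, and what remains is the commutation of $\bar\xi_{\ts c}$ with the Cherednik operators for $c\ge1$. (The paper treats all $c\ge1$ directly by one and the same argument rather than conjugating them down to $c=1$; that difference is immaterial.)

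The gap is in that remaining step, which you explicitly leave unproved, calling it ``the only genuine obstacle'' and deferring it to Section \ref{sec:4} --- which in fact does something else entirely, namely compute explicit formulas for $\eta_{\,c}$ on a weight basis. Moreover the mechanism you propose --- commuting $E_{\ts c}^{\,n}$ and $\ad_{\ts F_c}^{\,n}$ past the individual operators $x_p^{\,-i}\ot E_{\ts ab}^{\ts(p)}\ot E_{\ts ba}\,t^{\,i}$, evaluating the resulting commutators from \eqref{hatcom}, and absorbing correction terms into $\Jb+\Jpb+\Ib$ --- is not how the argument closes. For $c\ge1$ the elements $E_{\ts c}\ts,F_c\ts,H_c$ lie in the finite subalgebra $\sl_m\subset\slhat_m$, and Corollary \ref{2.2}(ii) says precisely that the action of $\C_N$ commutes with the diagonal action of $\sl_m$; it also visibly commutes with right multiplication in the tensor factor $\Ur(\ts\slhat_m\ts)$ of $\Br$, since the lift of \eqref{ups} touches that factor only through left multiplication. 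Since $\xi_{\ts c}$ in \eqref{q1} is assembled exclusively from the left (diagonal) actions of $E_{\ts c}^{\,n}$ and $(\ts n\ts!\,H_c^{\ts(n)}\ts)^{-1}$ and from powers of $\ad_{\ts F_c}$ (left minus right action of $F_c$), it commutes with the lifted Cherednik operators \emph{exactly}, on $\Bb$ itself, not merely modulo the ideals. The individual commutators $[\ts F_c\ts,\ts E_{\ts ab}^{\ts(p)}\ot E_{\ts ba}\,t^{\,i}\ts]$ are indeed nonzero, but they cancel in the sum over $a,b$ by the $\sl_m\ts$-invariance already packaged into Corollary \ref{2.2}(ii); they do not land in $\Jb+\Jpb+\Ib$ term by term, so the verification you outline would not go through as described. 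Once Corollary \ref{2.2}(ii) is invoked in this way, the two difficulties you flag --- the infinite sum over $i$ and the degree-inhomogeneity of the ideals --- simply do not arise.
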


\noindent{\it Proof.}
First consider the linear map \eqref{BB}
determined by the Zhelobenko operator $\eta_{\,c}$ for any $c>0\,$.
This map commutes with the action of the algebra $\C_N$
by the definition \eqref{q1} of
corresponding operator $\xi_{\,c}:\Br\to\Bb\tts$, see Corollary 2.2(ii).
Here we also use the observation that for any $c>0$ the action of the 
element $\tau_c$ on $\Br$ commutes with multiplications
by the variables $x_1\lcd x_N$ and with permutations of these variables
in the tensor factor $\P_N$ of $\Br\,$, 
commutes with permutations of the $N$ tensor factors $\CC^m$ of $\Br\,$,
and commutes with~\eqref{ups} if \eqref{ups}
is regarded as an operator on $\Br$ using the 
left multiplication by elements of $\slhat_m$ in the 
tensor factor $\Ur(\slhat_m)$ of $\Br\,$.

Let us consider the linear map \eqref{BB}
determined by the Zhelobenko operator $\eta_{\,0}\,$.
We can write $\eta_{\,0}=\pi^{\,-1}\,\eta_{\,1}\,\pi\,$,
see the end of Subsection \ref{sec:34}.
The action of the element $\pi\in\R_m$
on $\Br$ commutes with multiplication
by the variables $x_1\lcd x_N$ in the tensor factor $\P_N$ of $\Br\,$,
and also commutes with simultaneous permutations
of these variables and of the corresponding
$N$ tensor factors $\CC^m$ of $\Br\,$.
By using the argument from the previous paragraph when $c=1\,$,
and by applying Corollary \ref{2.5} when
$V$ is the universal Verma module of level $\ell\,$,
we can now complete the proof of our theorem.
In particular, if we denote simply by $\up$ the linear map \eqref{BB}
determined by the Zhelobenko operator $\eta_{\,1}\,$, 
then for any index $p=1\lcd N$ we obtain
$$
\pi^{\,-1}\,\up\,\,\pi\,\,\theta_{\,p}=
\pi^{\,-1}\,\up\,\Bigl(\ts\theta_{\,p}+\frac{\ka}m\,\ts\id\,\Bigr)\,\pi=
\pi^{\,-1}\ts\Bigl(\ts\theta_{\,p}+\frac{\ka}m\,\ts\id\,\Bigr)\,\up\,\,\pi=
\theta_{\,p}\,\,\pi^{\,-1}\,\up\,\,\pi\,.
\eqno{\square}
$$

\medskip

The group $\T_m$ generated by 
$\tau_0\ts,\tau_1\ts\lcd\tau_{m-1}$
can be regarded as the Weyl group of the affine Lie algebra
$\slhat_m\,$. The quotient of 
$\R_m$ by the relation
$\pi^{\ts m}=1$ can be then regarded as the extended 
Weyl group of $\slhat_m\,$. These two facts underline our
definition of the operators 
$\eta_{\,0}\ts,\eta_{\,1}\ts\lcd\eta_{\,m-1}\,$. 
In the next section we will apply Theorem \ref{3.6}
when the universal Verma module $V$ of level $\ell$
appearing above is replaced by the usual Verma module $M_{\ts\beh}$
of $\slhat_m\,$. 


\section{Intertwining operators}
\label{sec:4}
\medskip


\subsection{}
\label{sec:41}

Using the right action of the Lie subalgebra $\hh\subset\slhat_m\,,$
consider the vector subspace
$$
\Br\,(\ts E_{\ts aa}-E_{\ts bb}-\mu_{\ts a}+\mu_{\ts b}\ts)\subset\Br
\quad\text{where}\quad
1\le a<b\le m\,.
$$
This subspace depends on the weight $\mu\in\t^{\ts\ast}$
via its restriction $\be$ to $\h\subset\t\,\ts$.
Let $\Ir_{\,\beh}$ be the sum of all these subspaces and of
the subspace $\Ir\subset\Br$ introduced 
just before stating Theorem~\ref{3.6}.
As an $\slhat_m\ts$-module, 
the quotient $\Br\ts/\ts(\,\Jpr+\Ir_{\,\beh}\ts)$ can be identified with
\begin{equation}
\label{ANM}
\P_N\ot(\CC^{\ts m})^{\ot N}\ot M_{\ts\beh}\,=\,
\A_N(\ts M_{\ts\beh}\,)\,.
\end{equation}
Here $\slhat_m$ acts on the quotient via the left action 
of the algebra $\Ur(\ts\slhat_m\ts)$ on its bimodule $\Br\,$.

Now suppose that the sequence $(\ts\mu_1\lcd\mu_m\ts)$ of complex numbers
obeys the conditions
\begin{equation}
\label{mucon}
\mu_{\ts a}-\mu_{\ts b}\,\notin\,\ZZ+\ell\,\ZZ
\quad\text{for}\quad
1\le a<b\le m\,.
\end{equation}
Note that then the sequence $\la=(\ts\la_1\lcd\la_m\ts)$ 
obeys the same conditions, since $\la_{\ts a}-\mu_{\ts a}\in\ZZ$
for $a=1\lcd m$
by our assumption. 
Similarly to $\Ir_{\,\beh}\,\ts$, denote by $\Ib_{\,\beh}$
the sum of all subspaces
$$
\Bb\,(\ts E_{\ts aa}-E_{\ts bb}-\mu_{\ts a}+\mu_{\ts b}\ts)\subset\Bb
\quad\text{where}\quad
1\le a<b\le m\,,
$$
and of the subspace $\Ib\,$.
As a module of $\slhat_m\,$,
the quotient $\Bb\ts/\ts(\,\Jpb+\Ib_{\,\beh}\,)$
can be also identified with \eqref{ANM}.
The quotient $\Bb\ts/\ts(\,\Jb+\Jpb+\Ib_{\,\beh}\,)$
is then identified with the space 
$\A_N(\ts M_{\ts\beh}\,)_{\,\ts\nt}\ts$
of $\,\nt\ts$-coinvariants of \eqref{ANM}.

The shifted action of the affine Weyl group $\R_m$ on $\th^{\,\ast}$
determines an action of $\R_m$ on~$\hh^{\,\ast}\,$.
We use the same symbol $\comp$ to denote the latter action. Then
by \eqref{plusep} we have the equality
\begin{equation*}
\tau_c\,\comp\,\beh=\tau_c\ts(\,\beh+\ep_{\ts c}\ts)
\quad\text{for}\quad
c=0\ts,1\lcd m-1\,.
\end{equation*}
Here the summand $\ep_c$ defined in Subsection \ref{sec:31} 
is regarded as a linear function on 
the vector space $\hh$ by restriction from $\th\,$. 
Due to \eqref{q12} and to the last displayed equality,
we have 
$$
\bar\xi_{\ts c}\ts(\,\tau_c\ts(\,\ts\Ib_{\,\beh}\,))
\ts\subset\ts\Jb+\Ib_{\,\ts\tau_c\ts\comp\ts\beh}\,\,.
$$
Therefore the Zhelobenko operator $\eta_{\,c}$ 
defined in Subsection \ref{sec:34} determines a linear map
\begin{equation}
\label{BJJI}
\Bb\ts/\ts(\,\Jb+\Jpb+\Ib_{\,\beh}\,)
\,\to\,
\Bb\ts/\ts(\,\Jb+\Jpb+\Ib_{\,\ts\tau_c\ts\comp\ts\beh}\,)\,,
\end{equation}
see also Subsection \ref{sec:35}.
Via the identifications described above, \eqref{BJJI} becomes
a linear map
$$
\A_N(\ts M_{\ts\beh}\,)_{\,\ts\nt}\ts
\,\to\,
\A_N(\ts M_{\,\tau_c\ts\comp\ts\beh}\,)_{\,\ts\nt}\,.
$$
Then by \eqref{q11} the restriction of 
\eqref{BJJI}
to subspace of vectors of weight $\alh\ts$ becomes
a linear~map
\begin{equation}
\label{ANMT}
\A_N(\ts M_{\ts\beh}\,)_{\,\ts\nt}^{\,\alh}\ts
\,\to\,
\A_N
(\ts M_{\,\tau_c\ts\comp\ts\beh}\,)_{\,\ts\nt}^{\,\tau_c\ts\comp\ts\alh}
\,\ts.
\end{equation}

Now consider the action of the element $\pi\in\R_m$ on $\Bb$.
This action preserves the subspaces $\Jb$ and $\Jpb$ of $\Bb$. 
Further, by the definition of the subspace $\Ib_{\,\beh}$ of 
$\Bb$ we have 
$$
\pi\ts(\,\Ib_{\,\beh}\,)=\Ib_{\,\pi\ts(\,\beh\,)}\,.
$$ 
Here we employ the usual, not shifted action of 
$\R_m$ on $\th^{\,\ast}$.
But we also have 
$\pi\ts(\,\beh\,)=\pi\ts\comp\ts\beh\,$, see again Subsection \ref{sec:31}.
Hence the action of $\pi$ on $\Bb$ determines a linear map
\begin{equation}
\label{BJJIP}
\Bb\ts/\ts(\,\Jb+\Jpb+\Ib_{\,\beh}\,)
\,\to\,
\Bb\ts/\ts(\,\Jb+\Jpb+\Ib_{\,\ts\pi\ts\comp\ts\beh}\,)\,.
\end{equation}
Via the identifications described above, \eqref{BJJIP} becomes
a linear map
$$
\A_N(\ts M_{\ts\beh}\,)_{\,\ts\nt}\ts
\,\to\,
\A_N(\ts M_{\,\pi\ts\comp\ts\beh}\,)_{\,\ts\nt}\,.
$$
Then the restriction of 
\eqref{BJJIP}
to the subspace of vectors of weight $\alh\ts$ becomes
a linear~map
\begin{equation}
\label{ANMP}
\A_N(\ts M_{\ts\beh}\,)_{\,\ts\nt}^{\,\alh}\ts
\,\to\,
\A_N
(\ts M_{\,\pi\ts\comp\ts\beh}\,)_{\,\ts\nt}^{\,\pi\ts\comp\ts\alh}
\,\ts.
\end{equation}


\subsection{}
\label{sec:42}

In Subsection \ref{sec:23} we did already assume that $\ell=\ka-m\,$.
Under this assumption, by Theorem \ref{3.6}
the action of the algebra $\C_N$
on the vector space $\Bb\ts/\ts(\ts\Jb+\Jpb+\Ib\,)$
commutes with the linear map \eqref{BB}
determined by the Zhelobenko
operator $\eta_{\,c}$ for $c=0\ts,1\lcd m-1\,$.
Further, since
the action of $\C_N$ on $\Bb$ commutes with the right action of 
$\Ub(\ts\slhat_m\ts)\,$,
the algebra $\C_N$ acts on the source and target vector spaces
of the linear map \eqref{BJJI}. Moreover, the map \eqref{BJJI}
intertwines these two actions.

It follows that the linear map \eqref{ANMT} corresponding to \eqref{BJJI}
is also $\C_N\ts$-intertwining. This is because the action
of $\C_N$ on the vector space \eqref{ANM} 
as defined in Subsection \ref{sec:22} corresponds to the action of
$\C_N$ on 
$\Br\ts/\ts(\,\Jpr+\Ir_{\,\beh}\ts)\,$.
Similar correspondence holds for 
$\tau_c\,\comp\,\beh$ instead of $\beh\,$.

We can replace the source and target $\C_N\ts$-modules in
\eqref{ANMT} by their isomorphic modules,
using Proposition \ref{2.3}. 
The value of 
$f$ appearing in that proposition is the same for
the sequence $\mu$ and for the sequence $\tau_c\,\comp\,\mu\,$
instead of $\mu\,$,
see the end of Subsection~3.2.~Hence 
our replacement modules
in \eqref{ANMT} will be pullbacks of respectively
$\widehat{S}_{\mu}^{\,\la}$ and
$\widehat{S}_{\ts\tau_c\ts\comp\ts\mu}^{\ts\,\tau_c\ts\comp\ts\la}$
relative to the same 
automorphism \eqref{shift}.
By applying the inverse of this automorphism,
the Zhelobenko operator $\eta_{\,c}$ now determines
an $\C_N\ts$-intertwining linear map
\begin{equation}
\label{SS}
\widehat{S}_{\mu}^{\,\la}
\,\to\,
\widehat{S}_{\ts\tau_c\ts\comp\ts\mu}^{\ts\,\tau_c\ts\comp\ts\la}\,.
\end{equation}

Note that because $\ell=\ka-m\,$, the conditions \eqref{mucon}
on the sequence $\mu$ can be restated as
$$
\mu_{\ts a}-\mu_{\ts b}\,\notin\,\ZZ+\ka\,\ZZ
\quad\text{for}\quad
1\le a<b\le m\,.
$$
Under the latter conditions
both the source and target $\C_N\ts$-modules in \eqref{SS} 
are irreducible by \cite[Proposition 2.4.3]{AST}.
Hence any intertwining linear map between them is
unique up to a factor
from $\CC\ts$. In the next subsection we will determine 
this scalar factor for the intertwining 
map determined by the Zhelobenko operator~$\eta_{\,c}\,$. 

Now consider the map \eqref{BJJIP} and the corresponding 
map \eqref{ANMP}, which are determined by the action of the element 
$\pi\in\R_m$ on $\Bb$. The map \eqref{ANMP} is not $\C_N\ts$-intertwining
unless $\ka=0\,$, see Corollary \ref{2.5}. 
However, it will become intertwining
if we replace the target $\C_N\ts$-module in \eqref{ANMP} by its pullback 
via the automorphism \eqref{shift} of $\C_N$ where $f=\ka\ts/m\,$.

\newpage

We can now replace the source and target $\C_N\ts$-modules of the latter
intertwining operator by their isomorphic modules, 
again using Proposition \ref{2.3}. 
The source module can be replaced by
the pullback of ${\widehat{S}}_{\ts\mu}^{\,\la}$
relative to the automorphism \eqref{shift}
where $f=-\,(\ts\mu_1+\ldots+\mu_m\ts)\ts/\ts m\,$.
\\[-2pt]
The target module here can be replaced by the pullback of
$\widehat{S}_{\,\pi\ts\comp\mu}^{\ts\,\pi\ts\comp\la}$
relative to \eqref{shift} where 
$$
f=-\,(\ts\mu_1+\ldots+\mu_m+\ell+m\ts)\ts/\ts m+\ka\ts/m
=-\,(\ts\mu_1+\ldots+\mu_m\ts)\ts/\ts m\,.
$$
Here we used the formula for $\pi\ts\comp\ts\mu$ 
given in Subsection \ref{sec:31}. Since the values of $f$ for
the source and the target replacement modules are the same, 
the action of $\pi$ on $\Bb$ now
determines a $\C_N\ts$-intertwining operator
\begin{equation}
\label{SSP}
\widehat{S}_{\mu}^{\ts\la}
\to
\widehat{S}_{\,\pi\ts\comp\mu}^{\ts\,\pi\ts\comp\la}\,.
\end{equation}
By the irreducibility of the source and of target 
induced $\C_N\ts$-modules here, the latter operator
must coincide with the intertwining operator from \cite{S1}
up to a scalar multiplier.

Any element of the group $\R_m$ has a reduced decomposition
of the form $\pi^{\,g}\ts\tau_{\ts c}\ts\dots\ts\tau_{\ts d}$
where $g$ is the degree of this element relative to the 
$\ZZ\ts$-grading defined in 
Subsection \ref{sec:24}.
By using Corollary \ref{2.4} and the relation \eqref{pieta}, 
the composition of linear maps
$$£
\pi^{\,g}\ts\eta_{\,c}\ts\dots\ts\eta_{\,d}:
\,\Bb\ts/\ts\Jb\,\to\Bb\ts/\ts\Jb
$$
now determines a $\C_N\ts$-intertwining operator
$
\widehat{S}_{\mu}^{\ts\la}
\to
\widehat{S}_{\,\pi^{\,g}\ts\tau_{\ts c}\ts
\dots
\ts\tau_{\ts d}\ts\comp\mu}^{\ts\,\pi^{\,g}\ts\tau_{\ts c}\ts
\dots
\ts\tau_{\ts d}\ts\comp\la}\,.
$
It is defined as a composition of intertwining operators
of the form \eqref{SS},\eqref{SSP}
corresponding to generators of $\R_m\,$.
This is the intertwining operator mentioned in the
Introduction, where $\om$ is now the image of the element 
$\pi^{\,g}\ts\tau_{\ts c}\ts\dots\ts\tau_{\ts d}$
under the isomorphism of groups $\R_m\to\Sym_m\ltimes\ZZ^m\,$.
Indeed, under this isomorphism the 
shifted actions of the two groups on $\la$ and $\mu$
correspond to each other.


\subsection{}
\label{sec:43}

In this subsection we will provide explicit formulas for
the linear maps \eqref{BJJI} and \eqref{BJJIP} determined by the
Zhelobenko operator $\eta_{\,c}$ with $c=0,1\lcd m-1$ and by 
the action of the element $\pi\in\R_m\,$. 
For $1\le a_1\lcd a_N\le m$ and $i_1\lcd i_N\in\ZZ$
consider the element
$$
Y_{\,a_1\ldots\ts a_N}^{\,i_1\ldots\ts i_N}=
x_1^{\ts i_1}\ldots x_N^{\ts i_N}\ot
e_{a_1}\ot\ldots\ot e_{a_N}\ot1\in\Br\,.
$$
Due to the Poincar\'e\ts-Birkhoff\ts-Witt theorem for the universal 
enveloping 
algebra $\Ur(\ts\slhat_m\ts)\ts$, the images of all these elements in
the source quotient vector space of the maps \eqref{BJJI} and \eqref{BJJIP}
make a basis in this quotient. Now suppose that the numbers
$1\lcd m$ occur respectively $\nu_1\lcd\nu_m$ times in 
the sequence $a_1\lcd a_N\,$. 
Then the image of the element
$\,Y_{\,a_1\ldots\ts a_N}^{\,i_1\ldots\ts i_N}$ in the 
\\[-2pt]
source quotient has the weight $\alh$ relative to the left $\hh\ts$-module
structure on the quotient. 

\begin{prop}
\label{4.1}
For $1\le c<m\ts$, the Zhelobenko operator $\eta_{\,c}$
maps the image~of\/ $Y_{\,a_1\ldots a_N}^{\,i_1\ldots i_N}$
in the source quotient in \eqref{BJJI} to the image 
of next sum of elements of\/ $\Br$~in the \text{target~quotient\ts:}
$$
\sum_{h=0}^{\min\ts(\nu_c\ts,\ts\nu_{c+1})}
\!\!\!\!\!
\sum_{\ b_1,\,\ldots\ts,\,b_N}\,\,
h\ts!\,(\ts\la_{\ts c+1}-\la_{\ts c}-1\ts)\,\,
Y_{\,b_1\ldots\ts b_N}^{\,i_1\ldots\ts i_N}\,\,
\prod_{s=0}^h\,\,
\frac1{\mu_{\ts c+1}-\la_{\ts c}+s-1}
$$ 
where\/ $b_1\lcd b_N$ is a sequence obtained from\/ $a_1\lcd a_N$
by changing\/ $\nu_c-h$ terms\/ $c$ to\/ $c+1\ts$, and also changing\/ 
$\nu_{c+1}-h$ terms\/ $c+1$ to\/ $c\,$.
\end{prop}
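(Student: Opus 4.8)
The plan is to compute $\eta_{\,c}=\bar\xi_{\ts c}\,\tau_c$ directly on the basis vector $Y=Y_{\,a_1\ldots a_N}^{\,i_1\ldots i_N}$ by substituting the definition \eqref{q1}, exploiting that for $1\le c<m$ none of $\tau_c,E_{\ts c},F_{\ts c},H_c$ involves the variable $t$. Since $\tau_c$ only interchanges the indices $c$ and $c+1$ inside the middle tensor factor $(\CC^{\ts m})^{\ot N}$, it fixes the monomial $x_1^{\ts i_1}\ldots x_N^{\ts i_N}$ and the rightmost factor $1\in\Ur(\ts\slhat_m\ts)$, so $\tau_c(Y)$ is again a basis vector of $\Br$ and the exponents $i_1\lcd i_N$ are never altered; this already accounts for the superscripts in the asserted formula. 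Because $[\,F_c\,,1\,]=0$, each $\ad_{\ts F_c}$ in \eqref{q1} acts only through the level-zero diagonal action $\sum_q E_{\ts c+1,c}^{\ts(q)}$ on the middle factor, so $\ad_{\ts F_c}^{\,n}(\tau_c(Y))$ equals $n\ts!$ times the sum, over the $n$-element subsets $Q$ of the set of positions carrying the index $c+1$ in $a_1\lcd a_N$, of the basis vector obtained by turning those positions back into $c+1$. In the subsequent $E_{\ts c}^{\,n}\ad_{\ts F_c}^{\,n}(\tau_c(Y))$ every contribution in which an $E_{\ts c}=E_{\ts c,c+1}$ reaches the $\Ur(\ts\slhat_m\ts)$-factor lies in $\Jpb$, since $E_{\ts c}\in\np\subset\ntp$ and $\Jpb=\Bb\,\ts\ntp\ts$; discarding these, what survives is $(n\ts!)^{\ts2}$ times the sum over pairs $(Q,R)$ with $|Q|=|R|=n$ of the basis vector $Y_{Q,R}$ obtained from $\tau_c(Y)$ by the above lowering followed by raising to $c$ an $n$-element subset $R$ of the positions then carrying $c+1$.

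Next I would evaluate the prefactor $(\ts n\ts!\,H_c^{\ts(n)})^{-1}$ by the prescription of Subsection~\ref{sec:32}. Each $Y_{Q,R}$ is a basis vector with $\ad_{\ts H_c}$-eigenvalue $k=\nu_{c+1}-\nu_c$, so that
$$
(\ts H_c^{\ts(n)})^{-1}\,Y_{Q,R}\,=\,Y_{Q,R}\,\bigl(\ts(H_c+k)(H_c+k-1)\ldots(H_c+k-n+1)\ts\bigr)^{-1}\,,
$$
and under the identification of $\Bb\ts/\ts(\ts\Jb+\Jpb+\Ib_{\,\ts\tau_c\ts\comp\ts\beh}\,)$ with $\A_N(\ts M_{\,\tau_c\ts\comp\ts\beh}\,)_{\,\ts\nt}$ the right action of $H_c=E_{\ts cc}-E_{\ts c+1,c+1}$ on $Y_{Q,R}$ is evaluated at $(\ts\tau_c\ts\comp\ts\mu)_c-(\ts\tau_c\ts\comp\ts\mu)_{c+1}=\mu_{c+1}-\mu_c-2$, using the formula for $\tau_c\ts\comp\ts\mu$ from Subsection~\ref{sec:31}; the non-integrality conditions \eqref{mucon} guarantee that the relevant denominators do not vanish. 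Combining $k$ with this value produces $k+\mu_{c+1}-\mu_c-2=\la_{c+1}-\la_c-2=:M$, so modulo the three subspaces above $(\ts n\ts!\,H_c^{\ts(n)})^{-1}$ acts on $Y_{Q,R}$ as the scalar $(\ts n\ts!\,M^{\ts(n)})^{-1}$, where $M^{\ts(n)}=M(M-1)\ldots(M-n+1)$.

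The remaining ingredient is a combinatorial count. Fix the target basis vector $Y_{\,b_1\ldots b_N}^{\,i_1\ldots i_N}$ where $b_1\lcd b_N$ is obtained from $a_1\lcd a_N$ by changing $\nu_c-h$ of the indices $c$ to $c+1$ and $\nu_{c+1}-h$ of the indices $c+1$ to $c$. One checks that the corresponding parameter is $h=n-|Q\cap R|$, and that, once $Q\cap R$ is prescribed as an $(n-h)$-element subset of the $\nu_{c+1}-h$ positions flipped from $c+1$ to $c$, both $Q$ and $R$ are determined; hence the number of pairs $(Q,R)$ with $|Q|=|R|=n$ producing this $Y_{\,b_1\ldots b_N}^{\,i_1\ldots i_N}$ equals $\binom{\nu_{c+1}-h}{\,n-h\,}$. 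Therefore the coefficient of $Y_{\,b_1\ldots b_N}^{\,i_1\ldots i_N}$ in $\eta_{\,c}(Y_{\,a_1\ldots a_N}^{\,i_1\ldots i_N})$ is
$$
\sum_{n\ge h}\,\frac{n\ts!}{M^{\ts(n)}}\,\binom{\nu_{c+1}-h}{\,n-h\,}\,,
$$
a finite sum, the term $n=0$ accounting for the leading $\tau_c(Y)$ when $h=0$. Putting $K=\nu_{c+1}-h$ and shifting the summation index, this equals $\dfrac{h\ts!}{M^{\ts(h)}}\;{}_2F_1(\ts-K,\,h+1;\,h-M;\,1)$, so the Chu--Vandermonde summation ${}_2F_1(\ts-K,a;b;1)=(b-a)_K\ts/\,(b)_K$ applies; a short simplification using $\nu_{c+1}=\la_{c+1}-\mu_{c+1}$ and $M=\la_{c+1}-\la_c-2$ turns it into $h\ts!\,(\la_{c+1}-\la_c-1)\prod_{s=0}^h(\mu_{c+1}-\la_c+s-1)^{-1}$, which is precisely the coefficient in the statement. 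Summing over all admissible $h$ and over all admissible sequences $b_1\lcd b_N$ gives the formula.

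The step I expect to be the main obstacle is the careful handling of $(\ts n\ts!\,H_c^{\ts(n)})^{-1}$: one must move the fraction past the basis vector $Y_{Q,R}$ with exactly the shift $k=\nu_{c+1}-\nu_c$ from Subsection~\ref{sec:32}, and then evaluate the resulting rational function of $H_c$ in the \emph{target} space $\Bb\ts/\ts(\ts\Jb+\Jpb+\Ib_{\,\ts\tau_c\ts\comp\ts\beh}\,)$, i.e.\ at the shifted weight $\tau_c\ts\comp\ts\beh$ and not at $\beh$. It is exactly this that produces the constant $M=\la_{c+1}-\la_c-2$ and makes the hypergeometric sum collapse to the product in the statement; by comparison, the combinatorial count of the pairs $(Q,R)$ and the Chu--Vandermonde manipulation are routine, though they require care with signs.
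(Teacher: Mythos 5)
Your proposal is correct and follows essentially the same route as the paper's proof: apply $\tau_c$, expand \eqref{q1}, discard the terms where $E_{\ts c}$ hits the $\Ur(\ts\slhat_m\ts)$-factor as lying in $\Jpb$, count the surviving basis vectors (your $\binom{\nu_{c+1}-h}{\ts n-h\ts}$ is exactly the paper's multiplicity divided by $(n\ts!)^2$), evaluate $H_c$ at the effective value $\la_{\ts c+1}-\la_{\ts c}-2$ in the target quotient, and collapse the sum over $n$ by a terminating Gauss/Chu--Vandermonde summation. The only cosmetic differences are that you transport $H_c^{\ts(n)}$ to the right past $Y$ using the shift $k=\nu_{c+1}-\nu_c$ and evaluate the right $\hh$-action at $\tau_c\ts\comp\ts\beh$, whereas the paper evaluates the left weight of the whole expression at $\tau_c\ts\comp\ts\alh$ --- these agree, as your own bookkeeping of $k$ shows.
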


\begin{proof}
Applying the action of $\tau_c$ on $\Br$ to the element 
$Y_{\,a_1\ldots\ts a_N}^{\,i_1\ldots\ts i_N}$ amounts to replacing
every $c$ in the sequence $a_1\lcd a_N$ by $c+1\ts$, and other way round.
Let $d_1\lcd d_N$ be the sequence so obtained.
Apply the operator $\xi_{\ts c}$ to the resulting element of $\Br$
by using the definition \eqref{q1}. 
Note that for any $n=0,1,2,\ldots$ we have an equality in $\Br$
$$
E_{\ts c}^{\,n}
\ad_{\ts F_c}^{\,n}
(\ts Y_{\,d_1\ldots\ts d_N}^{\,i_1\ldots\ts i_N}\ts)=
x_1^{\ts i_1}\ldots x_N^{\ts i_N}\ot
E_{\ts c}^{\,n}
(\ts
F_c^{\ts n}\ts(\ts e_{d_1}\ot\ldots\ot e_{d_N})\ot1
\ts)\,.
$$
Modulo the subspace $\Jpr\subset\Br\,,$
the element of $\Br$ displayed here at the right hand side equals
$$
x_1^{\ts i_1}\ldots x_N^{\ts i_N}\ot
E_{\ts c}^{\,n}
F_c^{\ts n}\ts(\ts e_{d_1}\ot\ldots\ot e_{d_N})\ot1\,.
$$
In its turn, the last displayed element equals the sum of 
elements of the form 
$Y_{\,b_1\ldots\ts b_N}^{\,i_1\ldots\ts i_N}$
taken with certain multiplicities.
Namely, here the multiplicity is
the number of ways the sequence $b_1\lcd b_N$ can be obtained from
$d_1\lcd d_N$ by consecutively replacing any 
$n$ occurences of $c$ by $c+1\ts$,
and then consecutively replacing any
$n$ occurences of $c+1$ by $c$ in the result.

Denote by $h$ 
the number of those terms of the sequence
$d_1\lcd d_N$ which are equal to $c\,$, 
but change to $c+1$ in the
sequence $b_1\lcd b_N\,$. Obviously $h\le\nu_{c+1}\,$.
When passing from $d_1\lcd d_N$ to $b_1\lcd b_N$ as above,
the numbers of occurences of $1\lcd m$ remain the same.
Therefore $h$ is also
the number of those terms of the sequence
$d_1\lcd d_N$ which are equal to $c+1\,$, 
but change to $c$ in $b_1\lcd b_N\,$.
Hence $h\le\nu_c\,$. The above stated multiplicity is not zero only
if $h\le n\le\nu_{c+1}\,$. In this case it is equal to
$$
\frac{\hspace{16pt}\,n\ts!\,n\ts!\,(\ts\nu_{c+1}-h\ts)\ts!}
{(\ts n-h\ts)\ts!\,(\ts\nu_{c+1}-n\ts)\ts!}\,\,.
$$
It follows that modulo the subspace $\Jpb\subset\Bb\ts$, the element 
$
\xi_{\ts c}\ts(\ts Y_{\,d_1\ldots\ts d_N}^{\,i_1\ldots\ts i_N}\ts)
$
of $\Bb$ equals the sum
\begin{equation}
\label{hg}
\sum_{h=0}^{\ts\min(\nu_c\ts,\ts\nu_{c+1})}
\!\!\!\!\!
\sum_{\ b_1,\,\ldots\ts,\,b_N}\,\,\,
\sum_{n=h}^{\nu_{c+1}}\,\,\,
\frac{\,\,\,n\ts!\,(\ts\nu_{c+1}-h\ts)\ts!}
{(\ts n-h\ts)\ts!\,(\ts\nu_{c+1}-n\ts)\ts!\,H_c^{\ts(n)}}\,\,
(\ts Y_{\,b_1\ldots\ts b_N}^{\,i_1\ldots\ts i_N}\ts)
\end{equation}
where $b_1\lcd b_N$ range as stated in Proposition \ref{4.1}.
Here the sum over $n=h\lcd\nu_{c+1}$
can be computed by the Gauss formula for 
the hypergeometric function $\mathrm{F}\ts(\ts u,v,w\ts;z\ts)$ at $z=1\,$,
$$
\mathrm{F}\ts(\ts u,v,w\ts;1\ts)=
\frac
{\,\mathrm\Gamma(w)\,\mathrm\Gamma(w-u-v)}
{\,\mathrm\Gamma(w-u)\,\mathrm\Gamma(w-v)}
$$
which is valid for any $u,v,w\in\CC$ where $w\neq 0,-1,\ldots$ 
and $\mathrm{Re}\ts(\ts w-u-v\ts)>0\,$. By setting
$u=h-\nu_{c+1}$ and $v=h+1$ in that formula, we obtain
an equality of rational functions in $w$ 
$$
\sum_{n=h}^{\nu_{c+1}}\,\,\,
\frac{(-1)^{n-h}\,n\ts!\,(\ts\nu_{c+1}-h\ts)\ts!}
{\,h\ts!\,(\ts n-h\ts)\ts!\,(\ts\nu_{c+1}-n\ts)\ts!\,}
\prod_{s=0}^{n-h-1}\frac1{w+s}
\,\,\,=\prod_{s=0}^{\nu_{c+1}-h-1}\frac{w-h+s-1}{w+s}\ .
$$
Replacing the complex variable $w$ by the element $h-H_c\in\Ur(\h)$
in this equality, the sum of the fractions in \eqref{hg} 
taken over the indices $n=h\lcd \nu_{c+1}$ equals the product
$$
h\ts!\,\,\,\,\prod_{s=0}^{h-1}\,\,\,\frac1{H_c-s}
\,\,\,\,\cdot\!\!
\prod_{s=0}^{\nu_{c+1}-h-1}\!\frac{H_c-s+1}{H_c-s-h}
\,\,\,=\,\,\,
h\ts!\,(H_c+1)
\,\,\,\prod_{s=0}^{h}\,\,\,
\frac1{H_c-\nu_{c+1}+s+1}\ .
$$

On the other hand, we also know that
the image of the element \eqref{hg} of $\Bb$
in the target quotient in 
\eqref{BJJI}
has weight $\tau_c\,\comp\,\alh\ts$ 
relative to the left $\hh\ts$-module
structure on the quotient. So when computing the image,
we can replace the element $H_c\in\h$ in \eqref{hg}
by the weight~value 
$$
(\ts\tau_c\,\comp\,\alh\ts)\,(H_c)=\la_{\ts c+1}-\la_{\ts c}-2\,.
$$ 
Using the relation $\la_{\ts c+1}-\nu_{c+1}=\mu_{\ts c+1}\ts$, 
we now complete the proof of Proposition \ref{4.1}.
\end{proof}

\enlargethispage{48pt}

Note that by definition, the action of $\pi$ on $\Br$
maps the element $Y_{\,a_1\ldots a_N}^{\,i_1\ldots i_N}$ to 
$Y_{\,b_1\ldots\ts b_N}^{\,j_1\ldots\ts j_N}$
where $b_p=a_p+1$ and $j_p=i_p-\de_{\ts a_p\ts m}$ 
for every $p=1\lcd N\,$.
By using this observation along with the relation
\eqref{pieta} for $c=m\,$,
the next result can be derived from Proposition \ref{4.1}.
It can also be obtained by directly
following the arguments employed in the proof of that proposition.

\begin{prop}
\label{4.2}
For $c=0\ts$,
the Zhelobenko operator $\eta_{\,0}$
maps the image~of\/ $Y_{\,a_1\ldots a_N}^{\,i_1\ldots i_N}$
in the source quotient in \eqref{BJJI} to the image 
of the next sum of elements of\/ $\Br$~in the target \text{quotient\ts:}
$$
\sum_{h=0}^{\min\ts(\nu_{\ts1},\ts\nu_m)}
\!\!\!\!\!
\sum_{\ b_1,\,\ldots\ts,\,b_N}\,\,
h\ts!\,(\ts\la_1-\la_{\ts m}-\ell-1\ts)\,\,
Y_{\,b_1\ldots\ts b_N}^{\,j_1\ldots\ts j_N}\,\,
\prod_{s=0}^h\,\,
\frac1{\mu_1-\la_{\ts m}-\ell+s-1}
$$ 
where 
$$
j_p=i_p+\de_{\ts a_p\ts 1}-\de_{\,b_p\ts 1}=
i_p-\de_{\ts a_p\ts m}+\de_{\,b_p\ts m}
\quad\text{for}\quad
p=1\lcd N
$$ 
whereas\/ $b_1\lcd b_N$ is a sequence obtained from\/ $a_1\lcd a_N$
by changing\/ $\nu_{\ts1}-h$ terms\/ $1$ to $m\ts$, and also changing\/ 
$\nu_{m}-h$ terms\/ $m$ to\/ $1\,$.
\end{prop}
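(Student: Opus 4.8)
The plan is to deduce Proposition \ref{4.2} from Proposition \ref{4.1} by conjugating the Zhelobenko operator with the element $\pi\in\R_m\,$. Taking $c=m$ in \eqref{pieta} and using the conventions $\eta_{\,m}=\eta_{\,0}$ and $\eta_{\,m+1}=\eta_{\,1}$ gives $\pi\,\eta_{\,0}=\eta_{\,1}\,\pi$ on $\Bb\ts/\ts\Jb\ts$, hence $\eta_{\,0}=\pi^{-1}\,\eta_{\,1}\,\pi\ts$. Accordingly I would compute the three maps $\pi$, $\eta_{\,1}$, $\pi^{-1}$ on basis vectors one after another. By the description of the $\pi$-action on $\Br$ recalled just before the statement, the element $\pi$ sends the image of $Y_{\,a_1\ldots a_N}^{\,i_1\ldots i_N}$ to the image of $Y_{\,b_1\ldots b_N}^{\,j_1\ldots j_N}$ with $b_p=a_p+1$ and $j_p=i_p-\de_{\ts a_p\ts m}$ (indices read cyclically, $m+1$ replaced by $1$), and, as in \eqref{BJJIP}, it maps the source quotient in \eqref{BJJI} attached to $\beh$ onto the one attached to $\pi\ts\comp\ts\beh\,$. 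The occurrence counts of the shifted sequence $b_1\lcd b_N$ are $(\ts\nu_m\ts,\nu_1\lcd\nu_{m-1}\ts)$, which by the formulas of Subsection \ref{sec:31} are precisely the differences $(\ts\pi\ts\comp\ts\la\ts)_{\ts a}-(\ts\pi\ts\comp\ts\mu\ts)_{\ts a}\ts$; in particular the non-integrality hypothesis \eqref{mucon} is inherited by $\pi\ts\comp\ts\mu$ and the new vector has the expected weight $\pi\ts\comp\ts\alh\ts$.

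Next I would apply Proposition \ref{4.1} with $c=1$ and with $\la\ts,\mu$ replaced by $\pi\ts\comp\ts\la\ts,\pi\ts\comp\ts\mu\ts$. This yields a sum over $h=0\lcd\min(\ts\nu_m\ts,\nu_1\ts)$ and over relabellings changing $\nu_m-h$ occurrences of $1$ into $2$ and $\nu_1-h$ occurrences of $2$ into $1$ (the Laurent exponents being unaffected by $\eta_{\,1}$), with coefficient $h\ts!\ts\bigl((\ts\pi\ts\comp\ts\la)_{\ts2}-(\ts\pi\ts\comp\ts\la)_{\ts1}-1\bigr)\,\prod_{s=0}^h\bigl((\ts\pi\ts\comp\ts\mu)_{\ts2}-(\ts\pi\ts\comp\ts\la)_{\ts1}+s-1\bigr)^{-1}\ts$. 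Substituting $(\ts\pi\ts\comp\ts\la)_{\ts1}=\la_{\ts m}+\ell+1\ts$, $(\ts\pi\ts\comp\ts\la)_{\ts2}=\la_{\ts1}+1$ and $(\ts\pi\ts\comp\ts\mu)_{\ts2}=\mu_{\ts1}+1$ turns this coefficient into exactly $h\ts!\ts(\la_{\ts1}-\la_{\ts m}-\ell-1)\,\prod_{s=0}^h(\mu_{\ts1}-\la_{\ts m}-\ell+s-1)^{-1}\ts$, the coefficient occurring in the proposition. Finally I would apply $\pi^{-1}\ts$, which sends the image of $Y_{\,d_1\ldots d_N}^{\,k_1\ldots k_N}$ to that of $Y_{\,e_1\ldots e_N}^{\,l_1\ldots l_N}$ with $e_p=d_p-1$ and $l_p=k_p+\de_{\ts d_p\ts1}\ts$, and which carries the quotient attached to $\tau_1\ts\comp\ts\pi\ts\comp\ts\beh$ to the one attached to $\pi^{-1}\tau_1\ts\pi\ts\comp\ts\beh=\tau_0\ts\comp\ts\beh\ts$, as required of the map \eqref{BJJI} for $c=0\ts$. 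Composing the three relabellings: a slot with $a_p\notin\{1,m\}$ returns unchanged with exponent $i_p\ts$; a slot with $a_p=m$ either returns to $m$ with exponent $i_p$ or becomes $1$ with exponent $i_p-1\ts$; a slot with $a_p=1$ either returns to $1$ with exponent $i_p$ or becomes $m$ with exponent $i_p+1\ts$; and the two kinds of change are counted by $\nu_1-h$ and $\nu_m-h$ respectively. This reproduces verbatim the sequences $b_1\lcd b_N$ and the exponents $j_p=i_p-\de_{\ts a_p m}+\de_{\ts b_p m}=i_p+\de_{\ts a_p1}-\de_{\ts b_p1}$ of the statement.

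The one genuinely delicate step is this last piece of bookkeeping; that is where I expect the main obstacle, namely keeping the cyclic conventions ($m+1\equiv1$, $0\equiv m$) straight and composing correctly the three successive substitutions of indices and of Laurent exponents, especially at the boundary positions where $a_p$ equals $1$ or $m\ts$. Everything else is a direct substitution into Proposition \ref{4.1} together with the explicit $\pi$-action formulas of Subsections \ref{sec:24} and \ref{sec:31}.

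If one prefers to avoid the conjugation, I would instead repeat the argument of the proof of Proposition \ref{4.1} verbatim for the $\sl_{\ts2}$-triple $E_{\ts0}=E_{\ts m1}\,t\ts$, $F_{\ts0}=E_{\ts1m}\,t^{-1}\ts$, $H_{\ts0}=C-E_{\ts11}+E_{\ts mm}\ts$: the same Gauss summation of the hypergeometric series at $1$ applies, evaluating $C$ at $\ell$ and $H_{\ts0}$ at the weight $\tau_0\ts\comp\ts\alh$ produces the shift by $\ell$ in the coefficient, and the factors $t^{\pm1}$ carried by $E_{\ts0}$ and $F_{\ts0}$ account for the shifts $\de_{\ts a_p1}-\de_{\ts b_p1}$ of the Laurent exponents.
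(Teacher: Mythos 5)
Your proposal is correct and follows exactly the route the paper indicates: it derives Proposition \ref{4.2} from Proposition \ref{4.1} via the conjugation $\eta_{\,0}=\pi^{-1}\,\eta_{\,1}\,\pi$ coming from \eqref{pieta} with $c=m$, together with the explicit action of $\pi^{\pm1}$ on the basis elements $Y_{\,a_1\ldots a_N}^{\,i_1\ldots i_N}$ and on the weights $\beh$, and your bookkeeping of the substituted parameters $(\pi\ts\comp\ts\la)_1=\la_m+\ell+1$, $(\pi\ts\comp\ts\la)_2=\la_1+1$, $(\pi\ts\comp\ts\mu)_2=\mu_1+1$ and of the cyclic index shifts checks out. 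The alternative direct argument you sketch at the end is also the one the paper mentions as a second option.
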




\end{document}